\newcommand{\arxiv}[1]{\iftoggle{arxiv}{#1}{}}
	\def\addcontentsline#1#2#3{}
	\renewcommand{\cite}[1]{\citep{#1}}
\long\def\@makecaption#1#2{
	\vskip 0.8ex
	\setbox\@tempboxa\hbox{\small {\bf #1:} #2}
	\parindent 1.5em  
	\dimen0=\hsize
	\advance\dimen0 by -3em
	\ifdim \wd\@tempboxa >\dimen0
	\hbox to \hsize{
		\parindent 0em
		\hfil 
		\parbox{\dimen0}{\def\baselinestretch{0.96}\small
			{\bf #1.} #2
		} 
		\hfil}
	\else \hbox to \hsize{\hfil \box\@tempboxa \hfil}
	\fi
}
  \title{``Convex Until Proven Guilty'': 
    Dimension-Free Acceleration of Gradient 
    Descent on Non-Convex Functions}
  \author{Yair Carmon ~~~ John C.\ Duchi ~~~ Oliver Hinder ~~~ Aaron Sidford \\
    \texttt{\{\href{mailto:yairc@stanford.edu}{yairc},\href{mailto:jduchi@stanford.edu}{jduchi},\href{mailto:ohinder@stanford.edu}{ohinder},\href{mailto:sidford@stanford.edu}{sidford}\}@stanford.edu}}
  \date{}
\begin{document}

\arxiv{\maketitle}

\begin{abstract}
We develop and analyze a variant of Nesterov's accelerated gradient descent (AGD) for minimization of smooth non-convex functions. We prove that one of two cases occurs: either our AGD variant converges quickly, as if the function was convex, or we produce a certificate that the function is ``guilty'' of being non-convex. This non-convexity certificate allows us to exploit negative curvature and obtain deterministic, dimension-free acceleration of convergence for non-convex functions. For a function $f$ with Lipschitz continuous gradient and Hessian, we compute a point $x$ with $\|\nabla f(x)\| \le \epsilon$ in $O(\epsilon^{-7/4} \log(1/ \epsilon) )$ gradient and function evaluations.  Assuming additionally that the third derivative is Lipschitz, we require only $O(\epsilon^{-5/3} \log(1/ \epsilon) )$ evaluations.
\end{abstract}
\section{Introduction}

Nesterov's seminal \citeyear{nesterov1983method} accelerated
gradient method
has inspired substantial
development of first-order methods for large-scale
convex 
optimization. In recent years,
machine learning and statistics have seen a shift toward large scale
\emph{non-convex} problems, including methods for matrix
completion~\cite{koren2009matrix}, phase retrieval~\cite{CandesLiSo15,
WangGiEl16},
dictionary learning~\cite{MairalBaPoSaZi08}, and neural network
training~\cite{lecun2015deep}. In practice, techniques from
accelerated gradient methods---namely, momentum---can have substantial
benefits for stochastic gradient methods, for example, in training neural
networks~\cite{RumelhartHiWi86, kingma2014adam}.  Yet little of the rich
theory of acceleration for convex optimization is known to transfer into non-convex
optimization.

Optimization becomes more difficult without convexity, as
gradients no longer provide global information about the function.  Even
determining if a stationary point is a local minimum is (generally) NP-hard
\cite{MurtyKa87,Nesterov00}.  It is, however, possible to leverage
non-convexity to improve objectives in smooth optimization: moving in 
directions of
negative curvature can guarantee function value reduction. We explore the
interplay between negative curvature, smoothness, and acceleration
techniques, showing how an understanding of the three simultaneously yields
a method that provably accelerates convergence of gradient descent for a 
broad class of non-convex functions.

\subsection{Problem setting}  
We consider the unconstrained minimization problem
\begin{equation}
  \label{eqn:obj}
  \minimize_x f(x),
\end{equation}
where $\func$ is smooth but potentially non-convex. We assume throughout the
paper that $f$ is bounded from below, two-times differentiable, and has
Lipschitz continuous gradient and Hessian. In Section~\ref{sec:third-order}
we strengthen our results under the additional assumption that $f$ has
Lipschitz continuous third derivatives. Following the standard first-order 
oracle model \cite{NemirovskiYu83}, we consider optimization methods that 
access only values and gradients of $f$ (and not higher order derivatives), 
and we measure their complexity by the total number of gradient and function 
evaluations.

Approximating the global minimum of $f$ to
$\epsilon$-accuracy is generally intractable, requiring time exponential in
$d\log\frac{1}{\epsilon}$ \citep[\S1.6]{NemirovskiYu83}.  Instead, we seek
a point $x$ that is $\epsilon$-approximately stationary, that is,
\begin{equation}
  \label{eq:approx-stationary-objective}
  \norm{\grad f(x)} \le \epsilon.
\end{equation}
Finding stationary points is a canonical problem in nonlinear optimization
\cite{NocedalWr06}, and while saddle points and local maxima are
stationary, excepting pathological cases, descent methods that
converge to a stationary point converge to a local
minimum~\citep[\S3.2.2]{lee2016gradient,Nemirovski1999}.

If we assume $f$ is convex, gradient descent satisfies the bound~\eqref{eq:approx-stationary-objective} after $O(\epsilon^{-1})$ 
gradient evaluations,
and AGD improves this rate to $O(\epsilon^{-1/2}\log\frac{1}{\epsilon})$
\cite{NesterovGradSmall2012}. Without convexity, gradient descent is significantly worse, having
worst-case complexity $\Theta(\epsilon^{-2})$ \cite{cartis2010complexity}.
More sophisticated gradient-based methods, including nonlinear conjugate
gradient~\cite{hager2006survey} and L-BFGS~\cite{liu1989limited} provide
excellent practical performance, but their global convergence guarantees are 
no better than $O(\epsilon^{-2})$.
Our work~\cite{carmon2016accelerated} and, independently,
\citet{agarwal2016finding},
break this $O(\epsilon^{-2})$ barrier, obtaining the rate  
$O(\epsilon^{-7/4}\log \frac{d}{\epsilon})$. Before we discuss this
line of work in 
Section~\ref{sec:related-work}, we overview our contributions.

\subsection{Our contributions}

\paragraph{``Convex until proven guilty''}

Underpinning our results is the observation that when we run Nesterov's
accelerated gradient descent (AGD) on \emph{any} smooth function $f$, one of
two outcomes must follow:

\begin{enumerate}[(a)]
\item \label{item:agd-good}
  AGD behaves as though $f$ was $\StrConv$-strongly convex, 
  satisfying inequality~\eqref{eq:approx-stationary-objective} in 
  $O(\StrConv^{-1/2}\log\frac{1}{\epsilon})$ iterations.
\item \label{item:agd-guilty}
  There exist points $u,v$ in the AGD trajectory that prove $f$ is 
  ``guilty'' of 
  not being $\StrConv$-strongly convex,
  \begin{equation}\label{eq:proven-guilty}
    f(u) < f(v) + \grad f(v) ^T (u-v) + \frac{\StrConv}{2}\norm{u-v}^2.
  \end{equation}
\end{enumerate}
\noindent
The intuition behind these observations is that if
inequality~\eqref{eq:proven-guilty} never holds during the iterations of
AGD, then $f$ ``looks'' strongly convex, and the
convergence~\eqref{item:agd-good} follows.  In
Section~\ref{sec:alg-components} we make this observation precise,
presenting an algorithm to monitor AGD and quickly find the witness pair
$u,v$ satisfying \eqref{eq:proven-guilty} whenever AGD progresses more
slowly than it does on strongly convex functions.  We believe there is
potential to apply this strategy beyond AGD, extending additional convex
gradient methods to non-convex settings.

\paragraph{An accelerated non-convex gradient method}

In Section~\ref{sec:main-result} we propose a method that iteratively
applies our monitored AGD algorithm to $f$ augmented by a proximal
regularizer. We show that both outcomes~\eqref{item:agd-good}
and~\eqref{item:agd-guilty} above imply progress minimizing $f$, where in
case~\eqref{item:agd-guilty} we make explicit use of the negative curvature
that AGD exposes. These progress guarantees translate to an overall
first-order oracle complexity of $O(\epsilon^{-7/4}\log\frac{1}{\epsilon})$,
a strict improvement over the $O(\epsilon^{-2})$ rate of gradient
descent. In Section~\ref{sec:experiments} we report preliminary experimental
results, showing a basic implementation of our method outperforms gradient
descent but not nonlinear conjugate gradient.

\paragraph{Improved guarantees with third-order smoothness}

As we show in Section~\ref{sec:third-order}, assuming Lipschitz continuous
third derivatives instead of Lipschitz continuous Hessian allows us to
increase the step size we take when exploiting negative curvature, making
more function progress. Consequently, the complexity of our method improves
to $O(\epsilon^{-5/3}\log\frac{1}{\epsilon})$. While the analysis of the
third-order setting is more complex, the method remains essentially
unchanged.  In particular, we still use only first-order information, never
computing higher-order derivatives.

\subsection{Related work}\label{sec:related-work}

\citet{nesterov2006cubic} show that cubic regularization of Newton's
method finds a point that satisfies the stationarity
condition~\eqref{eq:approx-stationary-objective}
in $O(\epsilon^{-3/2})$ evaluations of the Hessian. Given sufficiently
accurate arithmetic operations, a Lipschitz continuous Hessian is approximable to
arbitrary precision using finite gradient differences, and obtaining a full
Hessian requires $O(d)$ gradient evaluations. A direct implementation
of the Nesterov-Polyak method with a first-order oracle therefore has gradient
evaluation complexity $O(\epsilon^{-3/2} d)$, improving on
gradient descent only if $d \ll \epsilon^{-1/2}$, which may fail
in high-dimensions.

In two recent papers, we~\citep{carmon2016accelerated} and (independently)
\citeauthor{agarwal2016finding} obtain better rates for first-order methods.
\citet{agarwal2016finding} propose a careful implementation of the
Nesterov-Polyak method, using accelerated methods for fast approximate
matrix inversion. In our earlier work, we employ a combination of
(regularized) accelerated gradient descent and the Lanczos method. Both find
a point that satisfies the bound~\eqref{eq:approx-stationary-objective} with
probability at least $1-\delta$ using
$O\left(\epsilon^{-7/4}\log\frac{d}{\delta\epsilon}\right)$ gradient and
Hessian-vector product evaluations.

The primary conceptual difference between our approach and those of
\citeauthor{carmon2016accelerated} and \citeauthor{agarwal2016finding} is
that we perform no eigenvector search: we automatically find directions of
negative curvature whenever AGD proves $f$ ``guilty'' of
non-convexity. Qualitatively, this shows that explicit
second orders information is unnecessary
to improve upon gradient descent for stationary point
computation. Quantitatively, this leads to the following improvements:
\begin{enumerate}[(i)]
\item Our result is \emph{dimension-free and deterministic},
  with complexity independent 
  of the ratio $d / \delta$, compared to the $\log\frac{d}{\delta}$
  dependence of previous works.
  This is significant, as $\log \frac{d}{\delta}$ may be comparable
  to
  $\epsilon^{-1/4}/\log\frac{1}{\epsilon}$, making it unclear whether the 
  previous guarantees are actually better than those of gradient descent. %
\item Our method uses \emph{only gradient evaluations}, and does not require
  Hessian-vector products. In practice, Hessian-vector products may be 
  difficult to implement and more expensive to compute than gradients.
  \item 
	Under third-order smoothness assumptions we improve our method to 
	achieve $O(\epsilon^{-5/3}\log\frac{1}{\epsilon})$ rate. It is unclear how to 
	extend previous approaches to obtain similar 
	guarantees.
%
%
%
%
%
%
%
%
%
%
%
  
  %
\end{enumerate}


%
%
%
%
%
%


In distinction from the methods of \citet{carmon2016accelerated} and
\citet{agarwal2016finding}, our method provides no guarantees on positive
definiteness of $\nabla^2 f(x)$; if initialized at a 
saddle point it will terminate immediately.
However, 
as we further explain in Section~\ref{sec:combine-grad-eig},
we may combine our method with a fast eigenvector
search to recover the approximate positive definiteness guarantee $\hess f(x) 
\succeq -\sqrt{\epsilon}I$, even improving it to $\hess f(x) 
\succeq -{\epsilon}^{2/3} I$ using third-order smoothness, but
at the cost of reintroducing randomization, Hessian-vector products and a 
$\log\frac{d}{\delta}$ complexity term.

 \hide{
	This is my shot at starting the explanation with our original ``black box'' 
	intuition. I am 
	currently giving this up since it seems to take too much space.
	
	This work is based on
	(and named after) the following observation. Let $x_1, x_2, ...., x_t$ be the 
	sequence of iterates produced by a first-order method applied on the 
	function 
	$f$, and suppose there exists a $\StrConv$ strongly convex function $g$   
	that satisfies 
	$f(x_i) = g(x_i)$ and $\grad f(x_i) = \grad g(x_i)$ for $i=1,...,t$. Then if the 
	method is guaranteed to reduce the norm of $\StrConv$ strongly convex 
	functions at a certain rate, it must do so also for $\norm{\grad f(x_t)}$, 
	since 
	replacing $f$ with $g$ would not have affected the method's trajectory. 
	Conversely, suppose that 
	$\norm{\grad f(x_t)}$ fails to decrease at the prescribed rate for strongly 
	convex functions. Then, there must be two iterates $u,v\in\{x_1,...,x_t\}^2$ 
	that prove $f$ is ``guilty'' of not being $\StrConv$ strongly convex, by 
	satisfying
	\begin{equation*}
	f(v) + \grad f(v) ^T (u-v) + \frac{\StrConv}{2}\norm{u-v}^2  > f(u).
	\end{equation*}
	To see why this is so, note that if the above inequality holds for no   
	$u,v\in\{x_1,...,x_t\}^2$, then $g(x) = \max_{1\le i \le t}\left\{f(x_i) + \grad 
	f(x_i) ^T (x-x_i) + \frac{\StrConv}{2}\norm{x-x_i}^2\right\}$ is a $\StrConv$ 
	strongly convex function that agrees with $f$ and its gradient on every 
	iteration.
}

\subsection{Preliminaries and notation}
\label{sec:prelims}

Here we introduce notation and briefly overview 
definitions and results we use throughout.
We index sequences by subscripts, and use $x_i^j$ as shorthand 
for $x_i, x_{i+1}, ..., x_j$. We use $x,y,v,u,w,p,c,q$ and $z$ to denote points 
in 
$\R^\Dim$. Additionally, $\eta$ denotes step sizes, $\epsilon, \localeps$   
denote 
desired accuracy, $\theta$ denotes a scalar and  $\norm{\cdot}$
denotes the Euclidean norm on $\R^d$. We denote the $n$th derivative of a 
function $h:\R\to\R$ by $h^{(n)}$. We let $\log_+(t) = \max\{0, \log t\}$.

A function \func has $L_n$-Lipschitz $n$th derivative if it is $n$ times
differentiable and for every $x_0$ and unit vector $\ncdirection$, the
one-dimensional function $h(\theta) = f(x_0 + \theta\ncdirection)$ satisfies
\begin{equation*}
  \left|{h^{(n)}(\theta_1)-h^{(n)}(\theta_2)}\right| \le L_n |\theta_1 - \theta_2|.
\end{equation*}
We refer to this property as $n$th-order smoothness, or 
simply smoothness for $n=1$, where
it coincides with the Lipschitz continuity of $\nabla f$.
Throughout the paper, we make extensive use of the well-known consequence of
Taylor's theorem, that the Lipschitz constant of the $n$th-order derivative
controls the error in the $n$th order Taylor series expansion of $h$, \ie for 
$\theta, \theta_0 \in \R$ we have
\begin{equation}\label{eq:taylor-bound}
\left|h(\theta)-\sum_{i=0}^n \frac{1}{i!}h^{(i)}(\theta_0)(\theta-\theta_0)^i\right| 
\le 
\frac{L_n}{(n+1)!}|\theta 
- \theta_0|^{n+1}.
\end{equation}
A function $f$ is $\StrConv$-strongly convex if $f(u) \ge f(v) + \grad f(v)
^T (u-v) + \frac{\StrConv}{2}\norm{u-v}^2$ for all $v,u\in\R^\Dim$.

\section{Algorithm components}\label{sec:alg-components}

We begin our development by presenting the two building blocks of our
result: a monitored variation of AGD (Section~\ref{sub:agd-monitor}) and a
negative curvature descent step (Section~\ref{sub:exploit-nc}) that we
use when the monitored version of AGD certifies non-convexity. In
Section~\ref{sec:main-result}, we combine these components to obtain an
accelerated method for non-convex functions.

\subsection{AGD as a convexity monitor}\label{sub:agd-monitor}

The main component in our approach is Alg.~\ref{alg:AGDUPG},
\callAGDUPG{}. We take as input an $\localsmgrad$-smooth function $f$,
conjectured to be $\localstrconv$-strongly convex, and optimize it with
Nesterov's accelerated gradient descent method for strongly convex functions
(lines~\ref{line:agd-y} and~\ref{line:agd-x}). At every iteration,
the method invokes
\callCERT{} to test whether the optimization is progressing as it should for
strongly convex functions, and in particular that the gradient norm is 
decreasing
exponentially quickly (line~\ref{line:cert-w}). If the test fails, \callFWP{}
produces points $u,v$ proving that $f$ violates
$\StrConv$-strong convexity. Otherwise, we proceed until we find a point $y$
such that $\norm{\grad f(y)} \le \localeps$.

\begin{algorithm}[tb]
  \caption{
    \protect\callAGDUPG{$f,y_0$, $\localeps$, 
      $\localsmgrad$, $\localstrconv$}
    \label{alg:AGDUPG} }
  \begin{algorithmic}[1]
    \State Set $\kappa \gets \localsmgrad/\localstrconv$,  $\omega \gets 
    \frac{\sqrt{\kappa} - 1}{\sqrt{\kappa} + 1}$ and $x_0 \gets y_0$ 
    \label{line:agupg-defs}
    \For{$t =  1, 2, \ldots$}
    \State  $y_{t} \gets x_{t-1} - \frac{1}{\localsmgrad} \nabla f (x_{t-1})$ 
    \label{line:agd-y}
    \State  $x_{t} \gets y_{t} + \omega \left( y_{t} - y_{t-1} \right)$ 
    \label{line:agd-x}
    \vspace{0.05cm}
    \State $w_t \gets \callCERT{f, y_0, y_{t}, \localsmgrad, \localstrconv,\kappa}$ 
    \label{line:call-cert}
    \If { $w_t \neq \NULL$ } \Comment{convexity violation}
    \State $(u,v) \gets \callFWP{f, x_0^t, y_0^t, w_t, \localstrconv}$
    \State \Return ($x_0^{t}, y_0^{t}, u, v$) \label{line:return-witness}
    \EndIf

    \If{$\| \nabla f(y_{t}) \| \le  \localeps$}
    \Return($x_0^{t}, y_0^{t}, \NULL$)  
    \label{line:agdupg-convex-terminate}%
    \EndIf
    \EndFor
  \end{algorithmic}
  \begin{algorithmic}[1]\label{alg:CERT}
    \Function{\CERT}{$f$, $y_0$, $y_t$, $\localsmgrad$, 
      $\localstrconv$, $\kappa$}
    \If{$f(y_{t}) > f(y_0)$} \label{line:cert-yt} 
     \Return{$y_0$} \Comment{non-convex 
      behavior \arxiv{detected}}%
    \EndIf
    \State Set 
    $z_{t} \gets y_t - \frac{1}{\localsmgrad}\grad f (y_t)$\label{line:zt}
    \State Set $\psi(z_t) \gets f(y_0) - f(z_{t}) + 
    \frac{\localstrconv}{2}\norm{z_{t}-y_0}^2$ \label{line:psi-zt}
    \If{$\| \nabla f(y_{t}) \|^{2} > 2 \localsmgrad \psi(z_t)	
      e^{-t/\sqrt{\kappa}}$}
    \label{line:cert-w} 
    \Return{$z_{t}$} \Comment{AGD has stalled}
    \Else  {}
    \Return{$\NULL$}
    \EndIf
    \EndFunction
  \end{algorithmic}
  \begin{algorithmic}[1]\label{alg:FWP}
    \Function{\FWP}{$f$, $x_0^t$, $y_0^t$, $w_t$, $\localstrconv$}
    \For{$j=0,1,\ldots,t-1$}
    \For{$u=y_j, w_t$}
    \arxiv{\If{$f(u) < f(x_j) + \grad f(v)^T (u - x_j) + \frac{\localstrconv}{2} \| u - 
	x_j \|^2$}}
    \Return $(u, x_j)$
    \EndIf
    \EndFor
    \EndFor
    \State (by Corollary~\ref{coro:witness-testimony} this line is never  
    reached)
    \EndFunction
  \end{algorithmic}
\end{algorithm}

The efficacy of our method is based on the following 
guarantee on the performance of AGD.

\begin{restatable}{proposition}{propAgdConvexity}\label{prop:agd-convexity}
  Let $f$ be $\localsmgrad$-smooth, and let $y_0^t$ and $x_0^t$ be the
  sequence of iterates generated by \callAGDUPG{$f$, $y_0$, $\localsmgrad$,
    $\localeps$, $\localstrconv$} for some $\localeps>0$ and $0<\localstrconv 
  \le \localsmgrad$.  Fix
  $w \in \R ^ \Dim$. If for $s = 0,1, \ldots, t-1$ we have
  \begin{flalign}
    f(u) \ge
    f(x_s) + \nabla f(x_s)^T (u - x_s) + \frac{\StrConv}{2}
    \| u - x_s \|^2
    \label{eq:agd-convexity-req} 
  \end{flalign}
  for both $u = w$ and $u = y_s$, then
  \begin{flalign}\label{eq:agd-expected-progress}
    f(y_t) - f(w) \le \left( 1- \frac{1}{\sqrt{\kappa}} \right)^{t} \psi(w),
  \end{flalign}
  where $\kappa = \frac{\localsmgrad}{\localstrconv}$ and $\psi(w) = f(y_0)
  - f(w) + \frac{\StrConv}{2}\norm{w-y_0}^2$.
\end{restatable}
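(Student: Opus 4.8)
The plan is to run Nesterov's \emph{estimate sequence} argument, but to keep careful track of exactly which convexity inequalities it invokes, so that hypothesis~\eqref{eq:agd-convexity-req} --- which we are only granted at the two points $u=w$ and $u=y_s$, not globally --- is enough. Set $\alpha = 1/\sqrt{\kappa}$, $\lambda_t = (1-\alpha)^t$, and define the quadratic estimate functions
$$
\phi_0(x) = f(y_0) + \tfrac{\localstrconv}{2}\norm{x-y_0}^2,
\qquad
\phi_{s+1}(x) = (1-\alpha)\phi_s(x) + \alpha\big[f(x_s) + \grad f(x_s)^T(x-x_s) + \tfrac{\localstrconv}{2}\norm{x-x_s}^2\big].
$$
Every term is a quadratic with Hessian $\localstrconv I$, so each $\phi_s$ has the canonical form $\phi_s(x) = \phi_s^\ast + \tfrac{\localstrconv}{2}\norm{x-v_s}^2$, and one reads off the standard recursions $v_{s+1} = (1-\alpha)v_s + \alpha x_s - \tfrac{\alpha}{\localstrconv}\grad f(x_s)$ and, using $\alpha^2 = \localstrconv/\localsmgrad$,
$$
\phi_{s+1}^\ast = (1-\alpha)\phi_s^\ast + \alpha f(x_s) - \tfrac{1}{2\localsmgrad}\norm{\grad f(x_s)}^2 + \alpha(1-\alpha)\big(\tfrac{\localstrconv}{2}\norm{x_s-v_s}^2 + \grad f(x_s)^T(v_s-x_s)\big).
$$

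The argument then splits in two. \textbf{Upper half:} by induction on $s$, using only~\eqref{eq:agd-convexity-req} at $u=w$ (so the linear-plus-quadratic term added at step $s$ lies below $f(w)$) together with the identities $(1-\alpha)\lambda_s = \lambda_{s+1}$ and $(1-\alpha)(1-\lambda_s)+\alpha = 1-\lambda_{s+1}$, one gets $\phi_t(w) \le (1-\lambda_t)f(w) + \lambda_t\phi_0(w)$. \textbf{Lower half:} by induction one shows $\phi_s^\ast \ge f(y_s)$, with base case $\phi_0^\ast = f(y_0)$. For the inductive step, substitute the hypothesis and the convexity lower bound $f(y_s) \ge f(x_s) + \grad f(x_s)^T(y_s-x_s)$ --- which is precisely~\eqref{eq:agd-convexity-req} at $u=y_s$ after discarding the nonnegative term $\tfrac{\localstrconv}{2}\norm{y_s-x_s}^2$ --- into the recursion for $\phi_{s+1}^\ast$, drop the nonnegative $\alpha(1-\alpha)\tfrac{\localstrconv}{2}\norm{x_s-v_s}^2$ term, and obtain $\phi_{s+1}^\ast \ge f(x_s) - \tfrac{1}{2\localsmgrad}\norm{\grad f(x_s)}^2 + (1-\alpha)\grad f(x_s)^T\big[(y_s-x_s) + \alpha(v_s-x_s)\big]$. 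The $\localsmgrad$-smooth descent lemma applied to $y_{s+1} = x_s - \tfrac{1}{\localsmgrad}\grad f(x_s)$ gives $f(y_{s+1}) \le f(x_s) - \tfrac{1}{2\localsmgrad}\norm{\grad f(x_s)}^2$, so it remains only to check that the extrapolation point satisfies $(y_s-x_s) + \alpha(v_s-x_s) = 0$, i.e.\ $x_s = (y_s + \alpha v_s)/(1+\alpha)$.

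This last identity is where the algorithm's two-sequence iterate $x_t = y_t + \omega(y_t-y_{t-1})$ with $\omega = \tfrac{\sqrt\kappa-1}{\sqrt\kappa+1}$ comes in: I will verify by induction that it coincides with the three-sequence form $x_s = (y_s + \alpha v_s)/(1+\alpha)$, using $v_0 = y_0 = x_0$ for the base case and substituting the $v$-recursion and the definition of $y_{s+1}$ into the inductive step, where the momentum coefficient $\tfrac{1-\alpha}{1+\alpha}$ produced by the computation equals $\omega$ exactly because $\alpha = 1/\sqrt\kappa$. Combining the two halves, $f(y_t) \le \phi_t^\ast \le \phi_t(w) \le (1-\lambda_t)f(w) + \lambda_t\phi_0(w)$, and since $\phi_0(w) - f(w) = \psi(w)$ and $\lambda_t = (1-1/\sqrt\kappa)^t$, this is exactly~\eqref{eq:agd-expected-progress}. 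I expect the main obstacle to be purely bookkeeping: confirming that the \emph{only} two places the classical proof uses convexity are the ones covered by~\eqref{eq:agd-convexity-req} at $u=w$ and $u=y_s$ (everything else --- the descent lemma and the estimate-sequence algebra --- needs only $\localsmgrad$-smoothness and the quadratic structure), and matching the two parametrizations of the iterates.
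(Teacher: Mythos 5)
Your proposal is correct and follows essentially the same route as the paper's proof: both run Nesterov's estimate-sequence argument with the quadratic functions $\phi_s$, prove the upper bound $\phi_t(w)\le f(w)+(1-1/\sqrt{\kappa})^t\psi(w)$ using the hypothesis only at $u=w$, prove $f(y_s)\le\phi_s^\ast$ by induction using the hypothesis at $u=y_s$ (with the quadratic term discarded) together with the $\localsmgrad$-smoothness descent lemma, and close the induction via the identity $v_s-x_s=\sqrt{\kappa}\,(x_s-y_s)$, itself verified by induction from the two-sequence momentum update. The only differences are cosmetic (your three-sequence parametrization $x_s=(y_s+\alpha v_s)/(1+\alpha)$ versus the paper's equivalent relation, and the bookkeeping of $\lambda_t$), so nothing further is needed.
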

\noindent
Proposition~\ref{prop:agd-convexity} is essentially a restatement of
established results~\citep{Nesterov04, bubeck2014convex}, where we take care
to phrase the requirements on $f$ in terms of local inequalities, rather
than a global strong convexity assumption. For completeness, we provide a
proof of Proposition~\ref{prop:agd-convexity} in
Section~\ref{app:agd-convexity}.

With Proposition~\ref{prop:agd-convexity} in hand, we summarize the  
guarantees of Alg.~\ref{alg:AGDUPG} as follows.

\begin{restatable}{corollary}{coroWitnessTestimony}
  \label{coro:witness-testimony}
  Let \func be $\localsmgrad$-smooth, let $y_0 \in \R^\Dim$, $\localeps>0$ 
  and $0 < \localstrconv \le \localsmgrad$. Let $(x_0^t, y_0^t, u, v) =$ 
  \callAGDUPG{$f$, $y_0$,
    $\localeps$, $\localsmgrad$, $\localstrconv$}. Then the
  number of iterations $t$ satisfies
  \begin{equation}\label{eq:agdupg-runtime}
    t\le 1 + \max\left\{0, \sqrt{\frac{\localsmgrad}{\localstrconv}} \log \left(\frac{2 
      \localsmgrad 
      \psi(z_{t-1}) }{\localeps^2} \right)\right\},
  \end{equation}
  where $\psi(z) = f(y_0) - f(z)
  + \frac{\localstrconv}{2} \norm{z - y_0}^2$
  is as in line~\ref{line:psi-zt} of \callCERT{}. 
  If $u, v \neq \NULL$ (non-convexity was detected), then
\begin{flalign}
  f(u) < f(v) + \grad f(v)^T (u - v) + \frac{\localstrconv}{2} \| u - v \|^2
\label{eq:agd-non-convexity} 
\end{flalign}
where $v=x_j$ for some $0\le j < t$ and $u = y_j$ or $u = w_t$ (defined on 
line~\ref{line:call-cert} of \callAGDUPG{}). 
Moreover,
\begin{equation}\label{eq:agdupg-mono}
\max\{f(y_1),\ldots,f(y_{t-1}),f(u)\} \le f(y_0).
\end{equation}
\end{restatable}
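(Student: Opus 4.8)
The plan is to establish the three claims separately, with the assertion that \callFWP{} never reaches its final line---equivalently, that it always returns a valid witness pair---as the substantive step; the other two claims are case analyses over which line of the algorithm fired.

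\emph{Iteration bound.} For $t \le 1$ the bound \eqref{eq:agdupg-runtime} is immediate since its right-hand side is at least $1$. For $t \ge 2$, the fact that the algorithm did not return at iteration $t-1$ means \callCERT{} returned $\NULL$ there and $\|\grad f(y_{t-1})\| > \localeps$. The former forces the test on line~\ref{line:cert-w} to have failed, i.e. $\|\grad f(y_{t-1})\|^2 \le 2\localsmgrad\,\psi(z_{t-1})\,e^{-(t-1)/\sqrt{\kappa}}$. Chaining this with $\|\grad f(y_{t-1})\| > \localeps$ gives $\localeps^2 < 2\localsmgrad\,\psi(z_{t-1})\,e^{-(t-1)/\sqrt{\kappa}}$, which in particular makes $2\localsmgrad\psi(z_{t-1})/\localeps^2 > 1$, so the logarithm in \eqref{eq:agdupg-runtime} is positive; taking logs and rearranging then yields $t - 1 < \sqrt{\kappa}\,\log\bigl(2\localsmgrad\psi(z_{t-1})/\localeps^2\bigr)$, which is \eqref{eq:agdupg-runtime}.

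\emph{Witness property.} One direction is immediate: when \callFWP{} returns a pair $(u,x_j)$ it does so exactly because the tested inequality $f(u) < f(x_j) + \grad f(x_j)^T(u - x_j) + \frac{\localstrconv}{2}\|u - x_j\|^2$ holds, and this is \eqref{eq:agd-non-convexity} with $v = x_j$, $u \in \{y_j,w_t\}$, $0 \le j < t$. For the other direction I would argue by contradiction: if \callFWP{} fell through, then for every $j \in \{0,\dots,t-1\}$ and $u \in \{y_j,w_t\}$ the reversed inequality holds, which is precisely hypothesis \eqref{eq:agd-convexity-req} of Proposition~\ref{prop:agd-convexity} with $w = w_t$; the proposition then yields $f(y_t) - f(w_t) \le (1-1/\sqrt{\kappa})^t\,\psi(w_t)$. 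But \callFWP{} is only invoked when \callCERT{} returned $w_t \neq \NULL$, which happens in exactly one of two ways, each of which I would show contradicts this bound. If line~\ref{line:cert-yt} fired then $w_t = y_0$, so $\psi(w_t) = 0$, forcing $f(y_t) \le f(y_0)$ and contradicting the condition $f(y_t) > f(y_0)$ that triggered the line. If line~\ref{line:cert-w} fired then $w_t = z_t$ and $\|\grad f(y_t)\|^2 > 2\localsmgrad\,\psi(z_t)\,e^{-t/\sqrt{\kappa}}$; here I would first note that reaching line~\ref{line:cert-w} requires $f(y_t) \le f(y_0)$, and combined with the standard descent estimate $f(z_t) \le f(y_t) - \frac{1}{2\localsmgrad}\|\grad f(y_t)\|^2$ (using $z_t = y_t - \frac{1}{\localsmgrad}\grad f(y_t)$ and $\localsmgrad$-smoothness) this gives $\psi(z_t) \ge 0$; then, via $1 - x \le e^{-x}$, the proposition's bound becomes $f(y_t) - f(z_t) \le e^{-t/\sqrt{\kappa}}\psi(z_t)$, while the descent estimate gives $f(y_t) - f(z_t) \ge \frac{1}{2\localsmgrad}\|\grad f(y_t)\|^2$, and together these force $\|\grad f(y_t)\|^2 \le 2\localsmgrad\,\psi(z_t)\,e^{-t/\sqrt{\kappa}}$---a contradiction. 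Hence \callFWP{} must return a valid pair.

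\emph{Monotonicity, and the main obstacle.} For each $s \in \{1,\dots,t-1\}$ iteration $s$ did not return, so \callCERT{} returned $\NULL$ there; in particular line~\ref{line:cert-yt} did not fire, giving $f(y_s) \le f(y_0)$. For the term $f(u)$: if $u = y_j$ with $j < t$ this is covered above (or is $f(y_0)$ when $j=0$); if $u = w_t = y_0$ it is trivial; and if $u = w_t = z_t$, then (as noted above) $f(y_t) \le f(y_0)$ and the descent estimate gives $f(z_t) \le f(y_t) \le f(y_0)$. This proves \eqref{eq:agdupg-mono}. The iteration bound and monotonicity are essentially bookkeeping; the real content is the witness property, and within it the key insight that \callFWP{} failing to terminate is \emph{literally} the hypothesis of Proposition~\ref{prop:agd-convexity}, so that the convergence rate it would then guarantee can be played off against whichever ``guilty'' verdict \callCERT{} returned. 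The only analytic inputs beyond the proposition are the one-line smoothness descent bound and the elementary inequality $1 - x \le e^{-x}$.
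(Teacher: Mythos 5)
Your proposal is correct and follows essentially the same route as the paper's proof: the contrapositive of Proposition~\ref{prop:agd-convexity} (phrased as a contradiction) applied to the two ways \callCERT{} can return a non-$\NULL$ point, the standard descent estimate $f(z_t) \le f(y_t) - \frac{1}{2\localsmgrad}\|\grad f(y_t)\|^2$, and the same bookkeeping for the iteration bound and monotonicity. The only difference is that you spell out the steps the paper leaves implicit, namely $\psi(z_t) \ge 0$ and the use of $1-x \le e^{-x}$, which is a welcome bit of extra care rather than a change of method.
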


\begin{proof}
  The bound~\eqref{eq:agdupg-runtime} is clear for $t = 1$. For $t>1$, the 
  algorithm has not terminated at iteration $t-1$, and so we know that neither 
  the condition in
  line~\ref{line:agdupg-convex-terminate} of \callAGDUPG{} nor the condition
  in line~\ref{line:cert-w} of \callCERT{} held at iteration $t-1$.
  Thus
  \begin{equation*}
    \localeps^2 < \norm{\grad f(y_{t-1})}^2 \le  2\localsmgrad \psi(z_{t-1})
    e^{-(t-1)/\sqrt{\kappa}},
  \end{equation*}
  which gives the bound~\eqref{eq:agdupg-runtime} when rearranged.

  Now we consider the returned vectors $x_0^t$, $y_0^t$, $u$, and $v$ from
  \callAGDUPG{}. Note that $u,v\neq\NULL$ only if $w_t \neq \NULL$. Suppose
  that $w_t = y_0$, then by line~\ref{line:cert-yt} of \callCERT{} we have,
  \begin{equation*}
    f(y_t) - f(w_t) > 0 = \left(1-\frac{1}{\sqrt{\kappa}}\right)^t \psi(w_t),
  \end{equation*}
  since $\psi(w_t) = \psi(y_0) = 0$. Since this 
  contradicts the progress bound~\eqref{eq:agd-expected-progress}, we 
  obtain the certificate~\eqref{eq:agd-non-convexity} by the contrapositive of 
  Proposition~\ref{prop:agd-convexity}:
  condition~\eqref{eq:agd-convexity-req} must not hold for some $0\le s < 
  t$, implying \callFWP{} will return for some 
  $j\le s$.

  Similarly, if $w_t=z_t = y_t - \frac{1}{\localsmgrad}\grad f(y_t)$ then by 
  line~\ref{line:cert-w} of \callCERT{} we must 
  have
  \begin{equation*}
    \frac{1}{2\localsmgrad}\norm{\grad f(y_{t})}^2 > \psi(z_t)e^{-t/\sqrt{\kappa}} 
    \ge  \left(1-\frac{1}{\sqrt{\kappa}}\right)^t \psi(z_{t}).
  \end{equation*}
  Since $f$ is $\localsmgrad$-smooth we have the
  standard progress guarantee (\emph{c.f.} 
  \citet{Nesterov04} \S 1.2.3)  
  $f(y_t) - f(z_{t}) \ge  
  \frac{1}{2\localsmgrad}\norm{\grad f(y_{t})}^2$, 
  again contradicting inequality~\eqref{eq:agd-expected-progress}. 

  To see that the bound~\eqref{eq:agdupg-mono} holds, note that $f(y_s) \le
  f(y_0)$ for $s=0,\ldots,t-1$ since condition~\ref{line:cert-yt} of
  \callCERT{} did not hold. If $u = y_j$ for some $0\le j < t$ then $f(u)
  \le f(y_0)$ holds trivially.  Alternatively, if $u_t = w_t =z_t$ then
  condition~\ref{line:cert-yt} did not hold at time $t$ as well, so we have
  $f(y_t) \le f(y_0)$ and also $f(u) = f(z_t) \le f(y_t) -
  \frac{1}{2\localsmgrad}\norm{\grad f(y_{t})}^2$ as noted above; therefore
  $f(z_{t}) \le f(y_{0})$.
\end{proof}

Before continuing, we make two remarks about implementation of
Alg.~\ref{alg:AGDUPG}.

\begin{enumerate}[(1)]
\item As stated, the algorithm requires evaluation of two function gradients
  per iteration (at $x_t$ and $y_t$).
  Corollary~\ref{coro:witness-testimony} holds essentially unchanged if
  we execute line~\ref{line:agdupg-convex-terminate} of~\callAGDUPG{} and
  lines \ref{line:zt}-\ref{line:cert-w} of~\callCERT{} only once every
  $\tau$ iterations, where $\tau$ is some fixed number (say 10). This
  reduces the number of gradient evaluations to $1+\frac{1}{\tau}$ per
  iteration.
\item Direct implementation would require $O(d\cdot t)$ memory to store the
  sequences $y_0^t$, $x_0^t$ and $\grad f(x_0^t)$ for later use by
  \callFWP{}. Alternatively, \callFWP{} can regenerate these sequences from
  their recursive definition while iterating over $j$, reducing the memory
  requirement to $O(d)$ and increasing the number of gradient and function
  evaluations by at most a factor of 2.
\end{enumerate}

In addition, while our emphasis is on applying \callAGDUPG{} to non-convex
problems, the algorithm has implications for convex optimization. For
example, we rarely know the strong convexity parameter $\StrConv$ of a given
function $f$; to remedy this, \citet{o2015adaptive} propose adaptive restart
schemes. Instead, one may repeatedly apply \callAGDUPG{} and use the
witnesses to update $\StrConv$.

\subsection{Using negative curvature}\label{sub:exploit-nc}

The second component of our approach is exploitation of negative curvature 
to decrease function values; in Section~\ref{sec:main-result} we use 
\callAGDUPG{} to generate $u, v$ such that
\begin{equation}\label{eq:exploit-pair-cond}
  f(u) < f(v) + \grad f(v)^T (u-v) - \frac{\alpha}{2}\norm{u-v}^2,
\end{equation}
a nontrivial violation of convexity (where $\alpha > 0$ is a parameter we
control using a proximal term). By taking an appropriately sized step from $u$ 
in the direction
$\pm(u-v)$, Alg.~\ref{alg:ENCP} can substantially lower
the function value near $u$
whenever the convexity violation~\eqref{eq:exploit-pair-cond} holds.
The following basic lemma shows this essential progress guarantee.

\begin{algorithm}[tb]
	\caption{\protect\callENCP{$f$, $u$, $v$, $\eta$}}\label{alg:ENCP}
	\begin{algorithmic}[1]
		\State  $\ncdirection \gets (u-v)/{\norm{u-v}}$
		\State $u_{+} \gets u + \eta\ncdirection$
		\State $u_{-} \gets u - \eta\ncdirection$
		\State \Return $\argmin_{z\in\{u_{-}, u_{+}\}} f(z)$
	\end{algorithmic}
\end{algorithm}

\begin{restatable}{lemma}{lemExploitPair}\label{lem:exploit-pair}
  Let \func have $\SmHess$-Lipschitz Hessian. Let $\alpha > 0$ and let $u$ 
  and $v$ satisfy~\eqref{eq:exploit-pair-cond}. If $\norm{u-v} \le 
  \frac{\alpha}{2\SmHess}$, then for every $\eta \le 
  \frac{\alpha}{\SmHess}$,  \callENCP{$f,u,v,\eta$} finds 
  a 
  point $z$ such that
  \begin{equation}
    f(z) \le f(u) - \frac{\alpha\eta^2}{12}.
    \label{eq:exploit-pair-progress}
  \end{equation}
\end{restatable}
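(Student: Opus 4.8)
The plan is to reduce to a one-dimensional estimate along the search direction $\ncdirection := (u-v)/\norm{u-v}$ used by \callENCP{}, and then to combine two facts: the convexity violation~\eqref{eq:exploit-pair-cond} forces the curvature of $f$ near $u$ to be very negative, while the Taylor bound~\eqref{eq:taylor-bound} controls how much a step of length $\eta$ in the $\pm\ncdirection$ direction can deviate from its quadratic model. Note first that $u\ne v$ (otherwise~\eqref{eq:exploit-pair-cond} reads $f(u) < f(u)$); set $r := \norm{u-v} > 0$ and $\psi(s) := f(v + s\ncdirection)$, which by the definition of second-order smoothness has $\SmHess$-Lipschitz second derivative. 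Then $\psi(0) = f(v)$, $\psi(r) = f(u)$, $\psi'(0) = \nabla f(v)^T\ncdirection$, $\psi''(r) = \ncdirection^T\nabla^2 f(u)\ncdirection$, and $f(u_{\pm}) = \psi(r\pm\eta)$; since \callENCP{} returns the better of $u_+,u_-$, it suffices to show $\tfrac12(f(u_+)+f(u_-)) \le f(u) - \alpha\eta^2/12$.

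The two steps I would carry out are: (i) apply~\eqref{eq:taylor-bound} with $n=2$ to $\psi$ about the center $r$, at the points $r+\eta$ and $r-\eta$, and add the resulting inequalities; the odd-order terms cancel, giving $\tfrac12(f(u_+)+f(u_-)) \le f(u) + \tfrac12\psi''(r)\eta^2 + \tfrac{\SmHess}{6}\eta^3$. (ii) Expand $\psi$ about $0$ with integral remainder, $\psi(r) = \psi(0) + r\psi'(0) + \int_0^r(r-t)\psi''(t)\,dt$; rewriting~\eqref{eq:exploit-pair-cond} in terms of $\psi$ gives $\int_0^r(r-t)\psi''(t)\,dt < -\tfrac{\alpha}{2}r^2$, and then bounding $\psi''(t) \ge \psi''(r) - \SmHess(r-t)$ for $t\in[0,r]$ and integrating yields $\psi''(r) < -\alpha + \tfrac{2\SmHess}{3}r$. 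Combining (i) and (ii) with the hypotheses $r \le \alpha/(2\SmHess)$ (so that $\tfrac{\SmHess}{3}r\,\eta^2 \le \tfrac{\alpha}{6}\eta^2$) and $\eta \le \alpha/\SmHess$ (so that $\tfrac{\SmHess}{6}\eta^3 \le \tfrac{\alpha}{6}\eta^2$) gives $\tfrac12(f(u_+)+f(u_-)) < f(u) - \tfrac{\alpha}{2}\eta^2 + \tfrac{\alpha}{6}\eta^2 + \tfrac{\alpha}{6}\eta^2 = f(u) - \tfrac{\alpha}{6}\eta^2$, which is even stronger than the claimed~\eqref{eq:exploit-pair-progress}.

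I expect the one genuinely delicate point to be step (ii), specifically obtaining a strong enough lower bound on $-\psi''(r)$. It is important to estimate the curvature at the point $u$ from which we actually step, directly from the integral-remainder identity, rather than first extracting a bound on $\psi''(0)$ from~\eqref{eq:exploit-pair-cond} and then transferring it to $\psi''(r)$ at the cost of an additional $\SmHess r$ of Lipschitz slack: that two-step route yields only $\psi''(r) < -\alpha/3$, and then the cubic error term $\tfrac{\SmHess}{6}\eta^3$ wipes out the entire curvature gain at the largest admissible step $\eta = \alpha/\SmHess$. With the direct estimate a constant-factor margin survives, and the generous constant $1/12$ in the statement leaves ample room for any looseness one prefers in the intermediate bounds; the rest is routine bookkeeping with~\eqref{eq:taylor-bound}.
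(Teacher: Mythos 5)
Your argument is correct and takes essentially the same route as the paper's proof: use the violation~\eqref{eq:exploit-pair-cond} to show that $\ncdirection^T\hess f(u)\ncdirection$ is strongly negative, then compare $f(u\pm\eta\ncdirection)$ to $f(u)$ via the second-order expansion with cubic error~\eqref{eq:taylor-bound}, absorbing the error terms using $\norm{u-v}\le\alpha/(2\SmHess)$ and $\eta\le\alpha/\SmHess$. Your two deviations---extracting the curvature at $u$ directly from the weighted integral remainder (giving $\ncdirection^T\hess f(u)\ncdirection < -\alpha+\tfrac{2}{3}\SmHess\norm{u-v}\le -\tfrac{2\alpha}{3}$, versus the paper's $-\alpha/2$ obtained from the infimum of the curvature along the segment plus a Lipschitz transfer) and averaging $f(u_+),f(u_-)$ instead of choosing the sign that makes the linear term nonpositive---are minor sharpenings of the same idea and account for your slightly stronger constant $\alpha\eta^2/6$.
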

\arxiv{
\begin{proof}
  We proceed in two parts; in the first part, we show that $f$ has negative
  curvature of at least $\alpha/2$ in the direction $\ncdirection = (u-v) /
  \norm{u - v}$ at the point $u$. We then show that this negative curvature
  guarantees a gradient step with magnitude $\eta$ produces the required
  progress in function value.

  Defining $\ncdirection = (u - v) / \norm{u - v}$, we obtain
  \begin{equation*}
    \int_0^{\norm{u - v}}
    \left[\int_0^\tau \ncdirection^T \hess f(v + \theta \ncdirection)
      \ncdirection d \theta\right] d\tau
    = f(u) - f(v) - \grad f(v)^T (u - v)
    \stackrel{(i)}{<}
    - \frac{\alpha}{2} \norm{u - v}^2,
  \end{equation*}
  where inequality~$(i)$ follows by assumption~\eqref{eq:exploit-pair-cond}.
  Let $\tau^* = \inf_{\theta \in [0, \norm{u-v}]}
  \ncdirection^T \hess f(v + \theta\ncdirection) \ncdirection$.
  Using that $\int_0^t \tau d\tau = t^2/2$,
  we substitute for the integrand to find that
  $\tau^* < -\alpha$, and using the Lipschitz
  continuity of $\hess f$ yields
  \begin{equation}
    \label{eqn:pair-negative-curve}
    \ncdirection^T \hess f(u) \ncdirection
    \le \tau^*
    + \SmHess \norm{u - v}
    < -\alpha + \frac{\SmHess \alpha}{2 \SmHess}
    = -\frac{\alpha}{2},
  \end{equation}
  where we have used that $\norm{u - v} \le \frac{\alpha}{2 \SmHess}$
  by assumption.

  Using again the Lipschitz continuity of $\hess f$, we 
  apply~\eqref{eq:taylor-bound} with $x_0=u, \theta_0=0$ and $\theta=\pm 
  \eta$ to obtain
  \begin{equation*}
    f(u_{\pm}) \le f(u) \pm \eta \grad f(u)^T \ncdirection
    +\frac{\eta^2}{2}\ncdirection^T 
    \hess f (u) \ncdirection + 
    \frac{\SmHess}{6}|\eta|^3.
  \end{equation*}
  The first order term must be negative for one of $u_+$ and
  $u_{-}$. Therefore, using inequality~\eqref{eqn:pair-negative-curve} 
  and 
  $\eta \le \alpha/\SmHess$, we have
  $f(z) = \min\{f(u_{+}),f(u_{-})\} \le f(u) - 
  \frac{\alpha\eta^2}{12}$ as desired.
\end{proof}
}

\section{Accelerating non-convex optimization}\label{sec:main-result}

We now combine the accelerated convergence guarantee of
Corollary~\ref{coro:witness-testimony} and the non-convex progress guarantee
of Lemma~\ref{lem:exploit-pair} to form \callMAIN{}. The idea for the
algorithm is as follows. Consider iterate $k-1$, denoted $p_{k-1}$. We
create a proximal function $\fhat $
by adding the proximal term  $\alpha \norm{x - p_{k-1}}^2$ to $f$. Applying
\callAGDUPG{} to $\fhat$ yields the sequences $x_0,\ldots,x_t$,
$y_0,\ldots,y_t$ and possibly a non-convexity witnessing pair $u,v$
(line~\ref{line:get-long-sequences}).  If $u,v$ are not available, we set
$p_k = y_t$ and continue to the next iteration. Otherwise, by
Corollary~\ref{coro:witness-testimony}, $u$ and $v$ certify that $\fhat$ is not
$\alpha$ strongly convex, and therefore that $f$ has negative curvature.
\callENCP{} then leverages this negative curvature, obtaining a point
$b^{(2)}$.  The next iterate $p_k$ is the best out of $y_0,\ldots,y_t,u$ and
$b^{(2)}$ in terms of function value.

\begin{algorithm}[tb]
	\caption{\protect{\callMAIN{$f$, $p_0$, $\SmGrad$, $\epsilon$, 
			$\alpha$, $\eta$}}\label{alg:MAIN}}
\begin{algorithmic}[1]
\For{$k =  1, 2, \ldots$}
\State Set $\fhat(x) \defeq f(x) + \alpha \norm{x - p_{k-1}}^2$
\State \label{line:get-long-sequences} $(x_0^{t}, y_0^{t}, u, v) \gets$ 
 \callAGDUPG{$\fhat$, 
$p_{k-1}$, 
$\frac{\epsilon}{10}$, 
$\SmGrad+2\alpha$, $\alpha$}
\If{$u,v=\NULL$}
$p_k \gets y_t$ \Comment{
	\arxiv{$\fhat$ was effectively strongly convex}}
\Else{} \Comment{
	\arxiv{$f$ has negative curvature on the line between $v$ and $u$}}
\State $b^{(1)} \gets \callFB{f, y_0^t, u, v}$
\State $b^{(2)} \gets \callENCP{f, u, v, \eta}$  
\State $p_k \gets \argmin_{z\in\{b^{(1)}, b^{(2)}\}}f(z)$
\EndIf
\If{$\| \nabla f(p_k) \| \le  \epsilon$}  
\Return $p_k$
\EndIf
\EndFor
\end{algorithmic}
	\begin{algorithmic}[1]\label{alg:FB}
		\Function{\FB}{$f$, $y_0^t$, $u$, $v$}
		\State \Return $\argmin_{z\in\{u,y_0,\ldots,y_t\}}f(z)$
		\EndFunction
	\end{algorithmic}
\end{algorithm}

The following central lemma provides a 
progress guarantee for each of the iterations of Alg.~\ref{alg:MAIN}.

\begin{restatable}{lemma}{lemMainProgressSecondOrder}
  \label{lem:main-progress-2nd}
  Let \func be $\SmGrad$-smooth and have $\SmHess$-Lipschitz 
  continuous Hessian, let $\epsilon, \alpha > 0$ and $p_0\in\R^\Dim$.
  Let
  $p_1,\ldots,p_K$ be the iterates \callMAIN{$f$, 
    $p_0$, 
    $\SmGrad$, $\epsilon$, $\alpha$, $\frac{\alpha}{\SmHess}$}
  generates. Then for each $k \in \{1, \ldots, K-1\}$,
  \begin{equation}\label{eq:main-prog}
    f(p_{k}) \le f(p_{k-1}) -  \min\left\{ \frac{\epsilon^2}{5\alpha}, 
    \frac{\alpha^3}{64\SmHess^2}\right\}.
  \end{equation} 
\end{restatable}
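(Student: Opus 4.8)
The plan is to fix $k\in\{1,\dots,K-1\}$ and analyze the $k$th iteration of \callMAIN{} according to whether the internal call to \callAGDUPG{} on the proximal objective $\fhat(x)=f(x)+\alpha\norm{x-p_{k-1}}^2$ returns a non-convexity witness. Two facts set this up: $\fhat$ is $(\SmGrad+2\alpha)$-smooth and inherits the $\SmHess$-Lipschitz Hessian of $f$, so the call is well-posed and Corollary~\ref{coro:witness-testimony} applies; and since iteration $k$ is not the last, $\norm{\grad f(p_k)}>\epsilon$.

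\textbf{Convex case ($u=v=\NULL$).} Here \callAGDUPG{} exits through line~\ref{line:agdupg-convex-terminate}, so $p_k=y_t$ with $\norm{\grad\fhat(p_k)}\le\epsilon/10$, and since line~\ref{line:cert-yt} of \callCERT{} did not trigger at the final iteration, \eqref{eq:agdupg-mono} gives $\fhat(p_k)\le\fhat(p_{k-1})$. Writing $\grad\fhat(p_k)=\grad f(p_k)+2\alpha(p_k-p_{k-1})$ and using $\norm{\grad f(p_k)}>\epsilon$ yields $\norm{p_k-p_{k-1}}>\frac{9\epsilon}{20\alpha}$; subtracting the proximal term from $\fhat(p_k)\le\fhat(p_{k-1})$ then gives $f(p_k) < f(p_{k-1})-\frac{81\epsilon^2}{400\alpha}\le f(p_{k-1})-\frac{\epsilon^2}{5\alpha}$, which implies \eqref{eq:main-prog}.

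\textbf{Guilty case ($u,v\neq\NULL$).} The first step is to convert the $\fhat$-witness \eqref{eq:agd-non-convexity} into a negative-curvature statement about $f$: substituting $\fhat=f+\alpha\norm{\cdot-p_{k-1}}^2$ and expanding — the identity $\norm{u-p_{k-1}}^2=\norm{u-v}^2+2(u-v)^T(v-p_{k-1})+\norm{v-p_{k-1}}^2$ collapses all proximal contributions to $-\alpha\norm{u-v}^2$ — leaves exactly $f(u)<f(v)+\grad f(v)^T(u-v)-\frac{\alpha}{2}\norm{u-v}^2$, i.e.\ condition \eqref{eq:exploit-pair-cond} with parameter $\alpha$. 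Also, by \eqref{eq:agdupg-mono}, $\fhat(u)\le\fhat(p_{k-1})$ and $\fhat(y_s)\le\fhat(p_{k-1})$ for the retained iterates, so $f(u)\le f(p_{k-1})-\alpha\norm{u-p_{k-1}}^2$ and $f(y_s)\le f(p_{k-1})-\alpha\norm{y_s-p_{k-1}}^2$. Now split on $\norm{u-v}$. If $\norm{u-v}\le\frac{\alpha}{2\SmHess}$, Lemma~\ref{lem:exploit-pair} applies to $f$ with $\eta=\alpha/\SmHess$ and gives $f(p_k)\le f(b^{(2)})\le f(u)-\frac{\alpha\eta^2}{12}\le f(p_{k-1})-\frac{\alpha^3}{12\SmHess^2}\le f(p_{k-1})-\frac{\alpha^3}{64\SmHess^2}$. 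If $\norm{u-v}>\frac{\alpha}{2\SmHess}$, then by the triangle inequality either $\norm{u-p_{k-1}}>\frac{\alpha}{8\SmHess}$ or $\norm{v-p_{k-1}}>\frac{3\alpha}{8\SmHess}$; in the first case $f(p_k)\le f(b^{(1)})\le f(u)\le f(p_{k-1})-\frac{\alpha^3}{64\SmHess^2}$, and in the second, writing $v=x_j$ (necessarily $j\ge1$, since $x_0=y_0=p_{k-1}$) and using $x_j=y_j+\omega(y_j-y_{j-1})$ with $\omega\in(0,1)$ to get $\norm{x_j-p_{k-1}}\le 3\max\{\norm{y_j-p_{k-1}},\norm{y_{j-1}-p_{k-1}}\}$ shows some retained $y_s$ satisfies $\norm{y_s-p_{k-1}}>\frac{\alpha}{8\SmHess}$, so $f(p_k)\le f(b^{(1)})\le f(p_{k-1})-\frac{\alpha^3}{64\SmHess^2}$ again (the case $u=w_t$ is identical, using $\fhat(w_t)\le\fhat(p_{k-1})$ from the proof of \eqref{eq:agdupg-mono}). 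In every case \eqref{eq:main-prog} follows.

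I expect the delicate part to be precisely this last sub-case: once $\norm{u-v}$ is too large for Lemma~\ref{lem:exploit-pair}, progress must come from \callFB{} instead, and one must argue that a large separation between $v=x_j$ and $u$ forces one of the points \callFB{} actually ranges over ($u$ itself, or some $y_s$) to lie far from the proximal center $p_{k-1}$. Choosing the split threshold ($\frac{\alpha}{8\SmHess}$ versus $\frac{3\alpha}{8\SmHess}$) in tandem with the factor-$3$ momentum bound so that both branches produce exactly the claimed constant $\frac{1}{64}$ is the main bookkeeping hurdle; everything else is routine manipulation of the proximal term and direct appeals to Corollary~\ref{coro:witness-testimony} and Lemma~\ref{lem:exploit-pair}.
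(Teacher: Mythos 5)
Your proposal is correct and follows essentially the same route as the paper: the convex case is argued identically, and in the guilty case you translate the $\fhat$-witness into condition~\eqref{eq:exploit-pair-cond} and invoke Lemma~\ref{lem:exploit-pair} with $\eta=\alpha/\SmHess$ exactly as the paper does. The only difference is organizational: where the paper case-splits on whether $f(b^{(1)})\le f(y_0)-\alpha\tau^2$ with $\tau=\frac{\alpha}{8\SmHess}$ and packages the distance control into Lemma~\ref{lem:progress-distance}, you case-split on $\norm{u-v}$ and argue the contrapositive inline (large $\norm{u-v}$ forces $u$ or some $y_s$ far from $p_{k-1}$, so the proximal term plus the monotonicity bound~\eqref{eq:agdupg-mono} makes $b^{(1)}$ drop by $\frac{\alpha^3}{64\SmHess^2}$), which is the same mechanism and yields the same constant.
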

\arxiv{
\begin{proof}
  Fix an iterate index $1\le k<K$; throughout the proof we let $y_0^t, x_0^t$ 
  and 
  $u,v$ refer to outputs of \callAGDUPG{} in the $k$th iteration.  We 
  consider the cases $u,v = \NULL$ and 
  $u,v \neq \NULL$ separately. In the former case, standard proximal point 
  arguments suffice, while in the latter case we require more care.

  The simpler case is $u,v = \NULL$ (no convexity violation detected), in 
  which $p_k = y_t$ and $\|\grad \fhat(p_k)\| \le \epsilon/10$ 
  (since \callAGDUPG{} terminated on 
  line~\ref{line:agdupg-convex-terminate}). Moreover, $k<K$ implies that 
  \callMAIN{} does not terminate at iteration $k$, and therefore $\norm{\grad 
    f(p_k)} > \epsilon$. Consequently,
  \begin{equation*}
    2\alpha \norm{p_k - p_{k-1}} = \|{\grad f(p_k) - \grad \fhat(p_k)}\| \ge 
    \|{\grad f(p_k)}\| - \|{\grad \fhat(p_k)}\| \ge
    9\epsilon/10.
  \end{equation*}
  The case $u,v = \NULL$ also implies $\fhat(p_k) = \fhat(y_t) \le 
  \fhat(y_0) = f(p_{k-1})$, as the condition in line~\ref{line:cert-yt} of 
  \callCERT{} never holds, and therefore
  \begin{equation*}
    f(p_k) = \fhat(p_k) - \alpha \norm{p_k - p_{k-1}}^2 \le f(p_{k-1}) - 
    \alpha\left(\frac{9\epsilon}{20\alpha}\right)^2 \le 
    f(p_{k-1}) - 
    \frac{\epsilon^2}{5\alpha},
  \end{equation*} 
  which establishes the claim in the case $u,v = \NULL$.
  
  Now we consider the case $u,v \neq \NULL$ (non-convexity detected).
   By Corollary~\ref{coro:witness-testimony},
  \begin{equation}
    \label{eq:nc-pair}
    \fhat(u)<\fhat(v) + \grad \fhat(v)^T (u-v) +
    \frac{\alpha}{2}\norm{u - v}^2 .
  \end{equation}
  By definition of $\fhat(x) = f(x) + 
  \alpha \norm{x-y_0}^2$, we have for any $y_0, u, v \in \R^\Dim$ that
  \begin{equation*}
    f(v) + \grad f(v)^T (u-v) - \frac{\alpha}{2}\norm{u-v}^2 -  f(u)
    = \fhat(v) + 
    \grad \fhat(v)^T (u-v) + \frac{\alpha}{2}\norm{u-v}^2 - \fhat(u) > 0 ,
  \end{equation*}
  where the inequality is Eq.~\eqref{eq:nc-pair}. Therefore, we conclude 
  that inequality~\eqref{eq:exploit-pair-cond} must hold.
  To apply Lemma~\ref{lem:exploit-pair}, we must control the distance
  between $u$ and $v$. We present the following lemma, which shows how  
  $b^{(1)}$ acts as an insurance policy against $\norm{u - v}$ growing
  too large.

  \begin{lemma}
    \label{lem:progress-distance}
    Let $f$ be $\SmGrad$-smooth, and $\tau \ge 0$. 
    At any iteration of \callMAIN{}, if 
    $u,v\neq\NULL$ and the best iterate $b^{(1)}$ satisfies $f(b^{(1)}) \ge 
    f(y_0) - \alpha \tau^2$ then for $1\le i < t$,
    \begin{equation*}
      \norm{y_i - y_0} \le \tau, ~~~
      \norm{x_i - y_0} \le 3\tau,
      ~~ \mbox{and}~~ 
      \norm{u-v} \le 4\tau.
    \end{equation*}
  \end{lemma}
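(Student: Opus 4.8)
The plan is to bound $\norm{y_i - y_0}$ directly from the hypothesis $f(b^{(1)}) \ge f(y_0) - \alpha\tau^2$, then propagate the bound to $x_i$ via the AGD recursion, and finally to $u - v$ using the witness structure from Corollary~\ref{coro:witness-testimony}. First I would recall that $b^{(1)} = \callFB{f, y_0^t, u, v} = \argmin_{z \in \{u, y_0, \ldots, y_t\}} f(z)$, so $f(y_i) \ge f(b^{(1)}) \ge f(y_0) - \alpha\tau^2$ for every $0 \le i \le t$, and similarly $f(u) \ge f(y_0) - \alpha\tau^2$. Now I would translate function-value proximity into spatial proximity using the proximal term: since \callAGDUPG{} is run on $\fhat(x) = f(x) + \alpha\norm{x - y_0}^2$, and by Corollary~\ref{coro:witness-testimony} (inequality~\eqref{eq:agdupg-mono} applied to $\fhat$) we have $\fhat(y_i) \le \fhat(y_0) = f(y_0)$ for $1 \le i < t$. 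Expanding, $f(y_i) + \alpha\norm{y_i - y_0}^2 \le f(y_0)$, hence $\alpha\norm{y_i - y_0}^2 \le f(y_0) - f(y_i) \le \alpha\tau^2$, giving $\norm{y_i - y_0} \le \tau$.

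Next I would handle $x_i$. From line~\ref{line:agd-x} of \callAGDUPG{}, $x_i = y_i + \omega(y_i - y_{i-1})$ with $\omega \in [0,1)$, so $\norm{x_i - y_0} \le \norm{y_i - y_0} + \norm{y_i - y_{i-1}} \le \norm{y_i - y_0} + \norm{y_i - y_0} + \norm{y_{i-1} - y_0} \le 3\tau$ (using the triangle inequality and the bound just established, noting $y_0 = x_0$ so the $i=1$ case also works). Finally, for $u - v$: by Corollary~\ref{coro:witness-testimony}, $v = x_j$ for some $0 \le j < t$ and $u \in \{y_j, w_t\}$. If $u = y_j$, then $\norm{u - v} = \norm{y_j - x_j} \le \norm{y_j - y_0} + \norm{y_0 - x_j} \le \tau + 3\tau = 4\tau$. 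If $u = w_t = z_t = y_t - \frac{1}{\SmGrad + 2\alpha}\grad\fhat(y_t)$, I would bound $\norm{w_t - y_0}$ by showing $\fhat(w_t) \le \fhat(y_0)$ — which holds since $w_t$ is only returned when line~\ref{line:cert-yt} of \callCERT{} did not fire, so $\fhat(y_t) \le \fhat(y_0)$, and then $\fhat(z_t) \le \fhat(y_t) - \frac{1}{2(\SmGrad+2\alpha)}\norm{\grad\fhat(y_t)}^2 \le \fhat(y_t)$ by the standard descent lemma — hence $\alpha\norm{w_t - y_0}^2 \le f(y_0) - f(w_t) \le \alpha\tau^2$ (using $f(w_t) = f(u) \ge f(y_0) - \alpha\tau^2$ from $b^{(1)}$ optimality, or rather $\fhat(w_t) \le f(y_0)$ directly), giving $\norm{w_t - y_0} \le \tau$ and thus $\norm{u - v} \le \norm{w_t - y_0} + \norm{y_0 - x_j} \le \tau + 3\tau = 4\tau$.

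The main subtlety I anticipate is the bookkeeping around which "best iterate" hypothesis controls which point: $b^{(1)}$ includes $u$ and all of $y_0, \ldots, y_t$ but \emph{not} the $x_i$ or $w_t$ explicitly, so the $x_i$ bound must route through the AGD recursion rather than a direct function-value argument, and the $w_t$ case must be argued via $\fhat$-monotonicity from \callCERT{} rather than from $b^{(1)}$. I would need to be careful that the indices align (the claim is for $1 \le i < t$ while $v = x_j$ ranges over $0 \le j < t$, and $x_0 = y_0$ makes the $j = 0$ endpoint trivial), and that the Lipschitz/smoothness constant used in the descent step is $\SmGrad + 2\alpha$ as passed to \callAGDUPG{}, though this only affects the step size and not the final constants. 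The rest is routine triangle-inequality manipulation.
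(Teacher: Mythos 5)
Your proposal is correct and follows essentially the same route as the paper: bound $\norm{y_i-y_0}$ via $\alpha\norm{y_i-y_0}^2=\fhat(y_i)-f(y_i)\le f(y_0)-f(b^{(1)})\le\alpha\tau^2$, pass to $x_i$ through the recursion $x_i=(1+\omega)y_i-\omega y_{i-1}$ with $\omega<1$, and then to $\norm{u-v}$ by the triangle inequality with $v=x_j$. The only difference is that your case split on $u$ ($u=y_j$ versus $u=w_t$) is unnecessary: the bound~\eqref{eq:agdupg-mono} of Corollary~\ref{coro:witness-testimony} already gives $\fhat(u)\le\fhat(y_0)$ for either form of $u$, which together with $f(u)\ge f(b^{(1)})$ (since $b^{(1)}$'s argmin set contains $u$) yields $\norm{u-y_0}\le\tau$ uniformly, so your re-derivation of the descent-step monotonicity for $w_t$ just repeats part of that corollary's proof.
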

  \noindent
  Deferring the proof of Lemma~\ref{lem:progress-distance}
  briefly, we show how it yields Lemma~\ref{lem:main-progress-2nd}.
  We set $\tau \defeq \frac{\alpha}{8\SmHess}$ and consider two 
  cases. First, if
  \begin{equation*}
    f(b^{(1)}) \le f(y_0) - \alpha\tau^2 = f(p_{k-1}) - 
    \frac{\alpha^3}{64\SmHess^2},
  \end{equation*}
  then we are done, since $f(p_k) \le f(b^{(1)})$. In the converse
  case, if $f(b^{(1)})
  \ge f(y_0) - \alpha\tau^2$, Lemma~\ref{lem:progress-distance} implies
  $\norm{u - v} \le 4\tau \le 
  \frac{\alpha}{2\SmHess}$.
  Therefore,  we can exploit the negative
  curvature in $f$, as Lemma~\ref{lem:exploit-pair} guarantees (with 
  $\eta=\frac{\alpha}{\SmHess}$), 
  \begin{equation*}
    f(b^{(2)})  \le f(u) - 
    \frac{\alpha^3}{12\SmHess^2}
    \stackrel{(i)}{\le} f(p_{k-1}) 
    -\frac{\alpha^3}{12\SmHess^2}.
  \end{equation*}
  Here inequality~$(i)$ uses that $f(u) \le \fhat(u) \le \fhat(y_0) = 
  f(p_{k-1})$ by Corollary~\ref{coro:witness-testimony}. This 
  implies inequality~\eqref{eq:main-prog} holds and concludes the case 
  $u,v\neq\NULL$.
\end{proof}

\begin{proof-of-lemma}[\ref{lem:progress-distance}]
  We begin by noting that $\fhat(y_i) \le \fhat(y_0) = f(y_0)$ for
  $i=1,\ldots,t-1$ by
  Corollary~\ref{coro:witness-testimony}. Using $f(y_i) \ge f(b^{(1)}) \ge
  f(y_0) - \alpha \tau^2$ we therefore have
  \begin{equation*}
    \alpha \norm{y_i - y_0}^2 = \fhat(y_i) - f(y_i) \le f(y_0) - f(y_i) \le \alpha 
    \tau^2,
  \end{equation*}
  which implies $\norm{y_i - y_0} \le \tau$. By
  Corollary~\ref{coro:witness-testimony} we also have $\fhat(u) \le
  \fhat(y_0)$, so we similarly obtain $\norm{u-y_0} \le \tau$.
  Using $x_i = (1+\omega)y_i - \omega y_{i-1}$, where $\omega =
  \frac{\sqrt{\kappa} - 1}{\sqrt{\kappa} + 1} \in (0, 1)$, we have by the
  triangle inequality that
  \begin{equation*}
    \norm{x_i - y_0} \le (1+\omega)\norm{y_i - y_0} + \omega\norm{y_{i-1} - 
      y_0} \le 3 \tau
    \label{eq:x-norm-bound}
  \end{equation*}
  for every $i=1,\ldots,t-1$. Finally, since $v=x_j$ for some $0\le j \le
  t-1$, this gives the last inequality of the lemma, as
  $\norm{u-v} \le \norm{u-y_0} + \norm{x_j - y_0} \le 4\tau$.
\end{proof-of-lemma}
}

Lemma~\ref{lem:main-progress-2nd} shows
we can accelerate gradient descent in a non-convex setting. Indeed,
ignoring all problem-dependent constants, setting $\alpha = \sqrt{\epsilon}$ in
the bound~\eqref{eq:main-prog} shows that we make 
$\Omega(\epsilon^{3/2})$ progress at every iteration of~\callMAIN{}, and 
consequently the number of iterations is bounded by $O(\epsilon^{-3/2})$. 
Arguing that calls to \callAGDUPG{} each require
$O(\epsilon^{-1/4}\log\frac{1}{\epsilon})$ gradient computations yields the 
following complexity
guarantee.

\begin{restatable}{theorem}{thmMainResultSecondOrder} \label{thm:main-result-2nd}
  Let \func be $\SmGrad$-smooth and have $\SmHess$-Lipschitz 
  continuous Hessian. Let $p_0\in\R^\Dim$, $\DeltaF = f(p_0) 
  - 
  \inf_{z\in\R^\Dim}f(z)$ and
  $0 < \epsilon \le 
  \min\{\DeltaF^{2/3}\SmHess^{1/3}, \SmGrad^2/(64\SmHess)\}$. Set
  \begin{equation}\label{eq:alpha-choice}
    \alpha = 2\sqrt{\SmHess\epsilon}
  \end{equation}
  then \callMAIN{$f$, $p_0$, $\SmGrad$, $\epsilon$, $\alpha$, 
    $\frac{\alpha}{\SmHess}$} finds a point $p_K$ such that 
  $\norm{\grad{f(p_K})} \le \epsilon$ 
  with at most
  \begin{equation}\label{eq:final-bound}
    20 \cdot 
    \frac{\DeltaF\SmGrad^{1/2}\SmHess^{1/4}}{\epsilon^{7/4}}\log 
    \frac{500\SmGrad\DeltaF}{\epsilon^2}
  \end{equation}
  gradient evaluations.
\end{restatable}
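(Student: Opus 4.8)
The plan is to combine the per-iteration progress guarantee of Lemma~\ref{lem:main-progress-2nd} with the per-call iteration count of Corollary~\ref{coro:witness-testimony} (applied to $\fhat$), and then optimize over $\alpha$ — though here $\alpha$ is already pinned down by~\eqref{eq:alpha-choice}, so the work is really to verify that this choice makes both pieces balance and yields the claimed bound~\eqref{eq:final-bound}.

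First I would bound the number $K$ of outer iterations of \callMAIN{}. With $\alpha = 2\sqrt{\SmHess\epsilon}$, the per-iteration decrease in~\eqref{eq:main-prog} is $\min\{\epsilon^2/(5\alpha),\alpha^3/(64\SmHess^2)\}$; substituting, $\epsilon^2/(5\alpha) = \epsilon^{3/2}/(10\SmHess^{1/2})$ and $\alpha^3/(64\SmHess^2) = 8(\SmHess\epsilon)^{3/2}/(64\SmHess^2) = \epsilon^{3/2}/(8\SmHess^{1/2})$, so the minimum is $\epsilon^{3/2}/(10\SmHess^{1/2})$ (up to the precise constant). Since each of the first $K-1$ iterations decreases $f$ by at least this amount and $f$ is bounded below by $f(p_0) - \DeltaF$, we get $K - 1 \le 10\DeltaF\SmHess^{1/2}/\epsilon^{3/2}$, i.e.\ $K = O(\DeltaF\SmHess^{1/2}\epsilon^{-3/2})$. (I should double-check that \callMAIN{} never gets stuck: the hypothesis $\epsilon \le \DeltaF^{2/3}\SmHess^{1/3}$ guarantees $K \ge 1$ is consistent, and termination on $\norm{\grad f(p_k)} \le \epsilon$ is exactly the desired output.)

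Next I would bound the gradient evaluations inside a single call to \callAGDUPG{$\fhat$, $p_{k-1}$, $\epsilon/10$, $\SmGrad + 2\alpha$, $\alpha$}. By Corollary~\ref{coro:witness-testimony} the iteration count is $t \le 1 + \sqrt{(\SmGrad+2\alpha)/\alpha}\,\log\!\big(2(\SmGrad+2\alpha)\psi(z_{t-1})/(\epsilon/10)^2\big)$. The condition number is $\kappa = (\SmGrad+2\alpha)/\alpha$; using $\alpha = 2\sqrt{\SmHess\epsilon}$ and the assumption $\epsilon \le \SmGrad^2/(64\SmHess)$ (which gives $\alpha \le \SmGrad/4$, so $\SmGrad + 2\alpha \le \tfrac{3}{2}\SmGrad$ and $\alpha \ge$ a constant fraction of itself), we get $\sqrt{\kappa} = O(\sqrt{\SmGrad/\alpha}) = O(\SmGrad^{1/2}\SmHess^{-1/4}\epsilon^{-1/4})$. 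For the log term I need a crude upper bound on $\psi(z_{t-1}) = \fhat(p_{k-1}) - \fhat(z_{t-1}) + \tfrac{\alpha}{2}\norm{z_{t-1}-p_{k-1}}^2$; bounding it via smoothness of $\fhat$ (which is $(\SmGrad+2\alpha)$-smooth) and the fact that the AGD iterates stay within a controlled ball — essentially a polynomial-in-$(\SmGrad,\DeltaF,1/\epsilon)$ quantity — makes $\log(\cdots) = O(\log(\SmGrad\DeltaF/\epsilon^2))$, matching the argument of the logarithm in~\eqref{eq:final-bound}. So each call costs $O(\SmGrad^{1/2}\SmHess^{-1/4}\epsilon^{-1/4}\log(\SmGrad\DeltaF/\epsilon^2))$ gradient evaluations, and the \callENCP{} / \callFB{} / regeneration overhead only multiplies this by a constant.

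Finally, multiplying the number of outer iterations $K = O(\DeltaF\SmHess^{1/2}\epsilon^{-3/2})$ by the per-call cost $O(\SmGrad^{1/2}\SmHess^{-1/4}\epsilon^{-1/4}\log(\SmGrad\DeltaF/\epsilon^2))$ gives total gradient evaluations $O\!\big(\DeltaF\SmGrad^{1/2}\SmHess^{1/4}\epsilon^{-7/4}\log(\SmGrad\DeltaF/\epsilon^2)\big)$, which is exactly~\eqref{eq:final-bound} once the constants are tracked carefully. \textbf{The main obstacle} is the bookkeeping in the last step: getting the explicit constants $20$ and $500$ right requires being careful with (i) the exact constant in the per-iteration progress (the $\min$ in~\eqref{eq:main-prog} and which branch is active for $\alpha = 2\sqrt{\SmHess\epsilon}$), (ii) the factor-of-2 overheads from the memory-efficient regeneration in \callFWP{} and from evaluating gradients at both $x_t$ and $y_t$, and (iii) a clean a priori bound on $\psi(z_{t-1})$ so that the logarithm's argument can be replaced by $500\SmGrad\DeltaF/\epsilon^2$ — this last bound needs the observation from Corollary~\ref{coro:witness-testimony} that $\fhat(y_s) \le \fhat(p_{k-1})$ for all $s < t$, so the iterates never escape a ball of radius $O(\sqrt{\DeltaF/\alpha})$ around $p_{k-1}$, together with $(\SmGrad+2\alpha)$-smoothness of $\fhat$.
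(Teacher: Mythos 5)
Your proposal follows the paper's proof essentially step for step: bound the number $K$ of outer iterations by telescoping Lemma~\ref{lem:main-progress-2nd} (the active branch is indeed $\epsilon^2/(5\alpha) = \epsilon^{3/2}/(10\SmHess^{1/2})$ under the choice~\eqref{eq:alpha-choice}), bound the length $T$ of each call to \callAGDUPG{} via the runtime guarantee~\eqref{eq:agdupg-runtime} of Corollary~\ref{coro:witness-testimony}, and multiply $2KT$. The one place you diverge---and the only soft spot---is the bound on $\psi(z_{t-1})$: you propose to control it through $(\SmGrad+2\alpha)$-smoothness of $\fhat$ and confinement of the iterates to a ball of radius $O(\sqrt{\DeltaF/\alpha})$, but no such argument is needed. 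Since $\fhat(x) = f(x) + \alpha\norm{x-p_{k-1}}^2$ and $y_0 = p_{k-1}$, the proximal term cancels exactly inside $\psi$: for every $z$ one has $\psi(z) = \fhat(y_0) - \fhat(z) + \frac{\alpha}{2}\norm{z-y_0}^2 = f(y_0) - f(z) - \frac{\alpha}{2}\norm{z-y_0}^2 \le \DeltaF$, which is the identity the paper uses and what makes the explicit constant $500$ inside the logarithm come out cleanly; your ball-plus-smoothness route would still give $\psi = O(\DeltaF)$ and the right asymptotics, but with looser constants and extra bookkeeping. With that replacement, the rest of your outline---$\sqrt{\kappa}\le\sqrt{2+\SmGrad/(2\sqrt{\SmHess\epsilon})}$ using $\epsilon \le \SmGrad^2/(64\SmHess)$, the simplification of the $K$ bound using $\epsilon \le \DeltaF^{2/3}\SmHess^{1/3}$, and the factor-of-$2$ accounting for gradients at both $x_s$ and $y_s$---matches the paper's proof.
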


\begin{proof}
  We bound two quantities: the number of calls to
  \callAGDUPG{}, which we denote by $K$, and the maximum number of 
  steps \callAGDUPG{} performs when it is called, which we denote by 
  $T$. The overall number gradient evaluations is $2KT$, as we compute 
  at most $2T$ gradients per iterations (at the points $x_0, \ldots, x_{t-1}$ and
  $y_1, \ldots, 
  y_t$).
  
  The upper bound on $K$ is immediate from 
  Lemma~\ref{lem:main-progress-2nd}, as by 
  telescoping the progress guarantee~\eqref{eq:main-prog} we obtain
  \arxiv{
    \begin{flalign*}
      \DeltaF
      \ge f(p_0) - f(p_{K-1})
      & = \sum_{k=1}^{K-1} \left(f(p_{k-1}) - 
      f(p_k)\right)
      \ge (K-1)\cdot \min\left\{ \frac{\epsilon^2}{5\alpha}, 
      \frac{\alpha^3}{64\SmHess^2}\right\} \ge 
      (K-1)\frac{\epsilon^{3/2}}{10\SmHess^{1/2}},
    \end{flalign*}
  }
  where the final inequality follows by substituting
  our choice~\eqref{eq:alpha-choice}, 
  of $\alpha$. We conclude that
  \begin{equation}\label{eq:K-bound}
    K \le 1 + 10\DeltaF\SmHess^{1/2}\epsilon^{-3/2}.
  \end{equation}
  
  To bound the number $T$ of steps
  of \callAGDUPG{}, note that for every $z\in \R ^\Dim$
  \begin{flalign*}
    \psi(z) & =  \fhat(y_0) - \fhat(z) + \frac{\alpha}{2}\norm{z-y_0}^2 
    = f(y_0) - f(z) - \frac{\alpha}{2}\norm{z-y_0}^2 \le \DeltaF.
  \end{flalign*}
  Therefore, substituting $\localeps = \epsilon/10$, $\localsmgrad = 
  \SmGrad + 
  2\alpha$ 
  and $\localsmgrad = \alpha = 2\sqrt{\SmHess \epsilon}$ into the 
  guarantee~\eqref{eq:agdupg-runtime} of 
  Corollary~\ref{coro:witness-testimony},
  \begin{equation}\label{eq:T-bound}
    T  \le 	1 + 
    \sqrt{2+\frac{\SmGrad}{2\sqrt{\SmHess\epsilon}}} 
    \log_+ \left(\frac{200(\SmGrad + 4\sqrt{\SmHess \epsilon})\DeltaF}{\epsilon^2}\right).
  \end{equation}
  
  We use $\epsilon \le \min\{\DeltaF^{2/3}\SmHess^{1/3},
  \SmGrad^2/(12\SmHess)\}$ to simplify the bounds on $K$ and $T$. Using $1
  \le \DeltaF\SmHess^{1/2}\epsilon^{-3/2}$ simplifies the
  bound~\eqref{eq:K-bound} to
  \begin{equation*}
    K \le 11\DeltaF\SmHess^{1/2}\epsilon^{-3/2}.
  \end{equation*}
  Applying $1 \le \SmGrad/(8\sqrt{\SmHess \epsilon})$ 
  in the bound~\eqref{eq:T-bound} gives
  \begin{flalign*}
    T  \le \sqrt{\frac{3}{4}} 
    \frac{\SmGrad^{1/2}}{\SmHess^{1/4}\epsilon^{1/4}} 
    \log \left(\frac{500\SmGrad\DeltaF}{\epsilon^2}\right),
  \end{flalign*}
  where $\DeltaF\SmGrad\epsilon^{-2} \ge 8$ allows us to drop the 
  subscript from the log.
  Multiplying the product of the above bounds by 2 gives the theorem.
\end{proof}

\arxiv{
  We conclude this section with two brief remarks.
  
  \begin{enumerate}[(1)]
  	\item The conditions
  	on $\epsilon$ guarantee that the bound~\eqref{eq:final-bound}
  	is non-trivial. If $\epsilon \ge \SmGrad^2 / \SmHess$,
  	then gradient descent achieves better guarantees. Indeed, with 
  	step-size $1/\SmGrad$, gradient descent satisfies $\norm{\nabla f(x)} \le 
  	\epsilon$ within at most $\frac{2\SmGrad \DeltaF}{\epsilon ^2}$ iterations  
  	\citep[cf.][Eq.~1.2.13]{Nesterov04}. Substituting $\epsilon \ge \SmGrad^2 
  	/ 
  	\SmHess$ therefore yields
  	\begin{equation*}
  	\frac{\SmGrad \DeltaF}{\epsilon^2}
  	\le \frac{\SmHess^2 \DeltaF}{\SmGrad^3}
  	= \SmHess^{1/4} \SmGrad^{1/2} \DeltaF
  	\cdot \left(\frac{\SmHess}{\SmGrad^2}\right)^\frac{7}{4}
  	\le \frac{\SmHess^{1/4} \SmGrad^{1/2} \DeltaF}{\epsilon^{7/4}}.
  	\end{equation*}
  	Alternatively, if $\epsilon > 10^{2/3} \DeltaF^{2/3} \SmHess^{1/3}$ then 
  	we 
  	have by inequality~\eqref{eq:K-bound} that $K < 2$, so Alg.~\ref{alg:MAIN} 
  	halts after at most a single iteration. Nevertheless, the 
  	bounds~\eqref{eq:K-bound} and \eqref{eq:T-bound} hold for any $\epsilon 
  	\ge 0$.

  	\item  While we state Theorem~\ref{thm:main-result-2nd} in
  	terms of gradient evaluation count, a similar bound holds for function
  	evaluations as well.  Indeed, inspection of our method reveals that each
  	iteration of Alg.~\ref{alg:MAIN} evaluates the function and not the
  	gradient at at most the three points $u, u_+$ and $u_-$; both complexity
  	measures are therefore of the same order.
  \end{enumerate}
}

\section{Incorporating third-order smoothness}\label{sec:third-order}

In this section, we show that when third-order derivatives are Lipschitz
continuous, we can improve the convergence rate of Alg.~\ref{alg:MAIN} by
modifying two of its subroutines. In Section~\ref{sub:exploit-third} we
introduce a modified version of \callENCP{} that can decrease function
values further using third-order smoothness. In
Section~\ref{sub:bound-third} we change \callFB{} to provide a guarantee
that $f(v)$ is never too large. We combine these two results in
Section~\ref{sub:improve-third} and present our improved complexity bounds.

\subsection{Making better use of negative 
curvature}\label{sub:exploit-third}

Our first observation is that third-order smoothness 
allows us to take larger 
steps and make greater progress when exploiting negative 
curvature, as the next lemma formalizes.

\begin{restatable}{lemma}{lemExploitPointCubic} \label{lem:exploit-point-cubic}
  Let \func have $\SmCubic$-Lipschitz third-order derivatives, $u \in
  \R^\Dim$, and $\ncdirection \in \R^\Dim$ be a unit vector. If
  $\ncdirection^T \grad^2 f(u) \ncdirection = -\frac{\alpha}{2} < 0$ then,
  for every $0 \le \eta \le \sqrt{3\alpha/\SmCubic}$,
  \begin{equation}
    \min\{f(u-\eta\ncdirection), f(u+\eta\ncdirection)\} \le f(u) - 
    \frac{\alpha\eta^2}{8}.
    \label{eq:exploit-point-cubic}
  \end{equation}
\end{restatable}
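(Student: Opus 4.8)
The plan is to reduce the claim to a one-dimensional Taylor estimate and exploit a parity argument that kills the odd-order terms. Define the one-dimensional function $h(\theta) = f(u + \theta\ncdirection)$. By the definition of $\SmCubic$-Lipschitz third derivatives, $h$ has $\SmCubic$-Lipschitz third derivative, and moreover $h(0) = f(u)$ and $h''(0) = \ncdirection^T \grad^2 f(u)\ncdirection = -\alpha/2$. So it suffices to show $\min\{h(-\eta), h(\eta)\} \le h(0) - \alpha\eta^2/8$ whenever $0 \le \eta \le \sqrt{3\alpha/\SmCubic}$.

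First I would apply the Taylor bound~\eqref{eq:taylor-bound} with $n = 3$ and $\theta_0 = 0$ at the two points $\theta = \pm\eta$, obtaining
\[
  h(\pm\eta) \le h(0) \pm \eta\, h'(0) + \frac{\eta^2}{2} h''(0) \pm \frac{\eta^3}{6} h'''(0) + \frac{\SmCubic}{24}\eta^4 .
\]
Adding the two inequalities makes the first- and third-order terms cancel, and since $\min\{a,b\} \le \frac{1}{2}(a+b)$,
\[
  \min\{h(-\eta), h(\eta)\} \le h(0) + \frac{\eta^2}{2} h''(0) + \frac{\SmCubic}{24}\eta^4 = f(u) - \frac{\alpha\eta^2}{4} + \frac{\SmCubic\eta^4}{24} .
\]
Finally I would invoke the step-size restriction in the form $\eta^2 \le 3\alpha/\SmCubic$ to bound the quartic error by $\frac{\SmCubic\eta^4}{24} \le \frac{\alpha\eta^2}{8}$, so that it eats at most half of the quadratic gain $\frac{\alpha\eta^2}{4}$; this leaves $f(u) - \alpha\eta^2/8$, which is exactly~\eqref{eq:exploit-point-cubic}.

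There is no genuine obstacle: the whole argument is a single averaging step. The only subtlety worth flagging is why the symmetric combination is the right move. In Lemma~\ref{lem:exploit-pair} the gradient term is handled by choosing whichever of $u_+, u_-$ makes the first-order term non-positive, which forces the step to be small ($\eta = \Theta(\alpha/\SmHess)$) because the Hessian-Lipschitz remainder scales like $\eta^3$. Here, averaging $h(\eta)$ and $h(-\eta)$ cancels \emph{both} the gradient term and the (a priori unbounded) third-derivative term at once, so the only error left is the fourth-order remainder $\Theta(\SmCubic\eta^4)$; balancing this against the $\Theta(\alpha\eta^2)$ gain is what permits the larger step $\eta = \Theta(\sqrt{\alpha/\SmCubic})$ and hence the stronger progress bound.
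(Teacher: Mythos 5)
Your proof is correct and follows essentially the same route as the paper: both reduce to the one-dimensional Taylor bound for $h(\theta)=f(u+\theta\ncdirection)$ and exploit the anti-symmetry of the odd-order terms, then use $\eta^2\le 3\alpha/\SmCubic$ to absorb the quartic remainder into half the quadratic gain. The only cosmetic difference is that you cancel the odd terms by averaging $h(\eta)$ and $h(-\eta)$ and using $\min\le\tfrac12(h(\eta)+h(-\eta))$, whereas the paper picks the sign $\bar\eta=-\mathrm{sign}(A_\eta)\eta$ making the combined odd term non-positive; the resulting estimate is identical.
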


\begin{proof}
  For $\theta\in\R$, define $h(\theta) = f(u + \theta\ncdirection)$. By
  assumption $h'''$ is $\SmCubic$-Lipschitz continuous, and therefore
  \begin{equation*}
    h(\theta) \le h(0) + h'(0)\theta + h''(0)\frac{\theta^2}{2} + 
    h'''(0)\frac{\theta^3}{6} + \SmCubic\frac{\theta^4}{24}.
  \end{equation*} 
  Set $A_\eta = h'(0)\eta + h'''(0)\eta^3/6$ and set $\bar{\eta} = -  
  \mathrm{sign}(A_\eta) \eta$. As $h'(0)\bar{\eta}+ 
  h'''(0)\bar{\eta}^3/6 = 
  -|A_\eta| \le 0$, we have
  \begin{flalign*}
    h(\bar{\eta}) \le h(0) + h''(0)\frac{\eta^2}{2} + 
    \SmCubic\frac{\eta^4}{24} \le f(u) -  \frac{\alpha\eta^2}{8},
  \end{flalign*}
  the last inequality using $h(0) = f(u)$, $h''(0) = -\frac{\alpha}{2}$ and
  $\eta^2 \le \frac{3\alpha}{\SmCubic}$. That
  $f(u+\bar{\eta}\ncdirection) = h(\bar{\eta})$
  gives the result.
\end{proof}

Comparing Lemma \ref{lem:exploit-point-cubic} to the second part of the
proof of Lemma \ref{lem:exploit-pair}, we see that second-order smoothness
with optimal $\eta$ guarantees $\alpha^3/(12\SmHess^2)$ function decrease,
while third-order smoothness guarantees a $3\alpha^2/(8\SmCubic)$
decrease. Recalling Theorem~\ref{thm:main-result-2nd}, where $\alpha$ scales
as a power of $\epsilon$, this is evidently a significant
improvement. Additionally, this benefit is essentially free: there is no
increase in computational cost and no access to higher order
derivatives. Examining the proof, we see that the result is rooted in the
anti-symmetry of the odd-order terms in the Taylor expansion. This rules
out extending this idea to higher orders of smoothness, as they contain
symmetric fourth order terms.

Extending this insight to the setting of Lemma~\ref{lem:exploit-pair} is 
complicated by the fact that, at relevant scales of $\norm{u-v}$, it is no longer 
possible to guarantee that there is negative curvature at either $u$ or 
$v$. Nevertheless, 
we are 
able to show that a small modification of \callENCP{} achieves the required 
progress.

\begin{algorithm}[tb]
	\caption{\protect\callENCPT{$f$, $u$, $v$, $\eta$}}\label{alg:ENCPT}
	\begin{algorithmic}[1]
		\State  $\ncdirection \gets (u-v)/\norm{u-v}$
		\State  $\eta' \gets 
		\sqrt{\eta(\eta + \norm{u-v})} - 
		\norm{u-v}$ \label{line:eta-prime}
		\State $u_{+} \gets u + \eta'\ncdirection$
		\State $v_{-} \gets v - \eta\ncdirection$
		\State \Return $\argmin_{z\in\{v_{-}, u_{+}\}} f(z)$
	\end{algorithmic}
\end{algorithm}

\begin{restatable}{lemma}{lemExploitPairCubic}\label{lem:exploit-pair-cubic}
		Let \func have $\SmCubic$-Lipschitz third-order derivatives. Let $\alpha 
		> 0$ and let $u$ 
	and $v$ satisfy~\eqref{eq:exploit-pair-cond} and let $\eta \le 
	\sqrt{2\alpha/\SmCubic}$. Then for every $\norm{u-v} 
	\le \eta /2$,  \callENCPT{$f,u,v,\eta$} finds 
	a 
	point $z$ such that
	\begin{equation}	\label{eq:exploit-pair-progress-cubic}
	f(z) \le 
	\max\left\{f(v)-\frac{\alpha}{4}\eta^2,f(u)-\frac{\alpha}{12}\eta^2\right\}.
	\end{equation}
\end{restatable}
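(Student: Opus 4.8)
The plan is to restrict $f$ to the line through $u$ and $v$ and then bound a single, carefully weighted convex combination of the two candidate function values. Write $r = \norm{u-v}$, which is positive because \eqref{eq:exploit-pair-cond} is vacuous when $u=v$; let $\ncdirection = (u-v)/r$ as in the algorithm and set $h(\theta) = f(v + \theta\ncdirection)$, so that $h$ has $\SmCubic$-Lipschitz third derivative. Then $f(v) = h(0)$, $f(u) = h(r)$, $f(v_-) = h(-\eta)$, and, since $u_+ = v + (r+\eta')\ncdirection$ and line~\ref{line:eta-prime} sets $s := r + \eta' = \sqrt{\eta(\eta+r)}$, also $f(u_+) = h(s)$. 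From $0 < r \le \eta/2$ we obtain $\eta' \ge \eta - r > 0$ and $\eta \le s \le \sqrt{3/2}\,\eta$. The point returned by Alg.~\ref{alg:ENCPT} achieves $f(z) = \min\{h(-\eta), h(s)\}$, so it suffices to bound this minimum.

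First I would turn \eqref{eq:exploit-pair-cond} into a constraint on the local Taylor data of $h$. By \eqref{eq:taylor-bound} with $n=3$ and $\theta_0 = 0$, write $h(\theta) = h(0) + h'(0)\theta + \tfrac12 h''(0)\theta^2 + \tfrac16 h'''(0)\theta^3 + R(\theta)$ with $|R(\theta)| \le \tfrac{\SmCubic}{24}\theta^4$. Condition \eqref{eq:exploit-pair-cond} says $h(r) < h(0) + h'(0)r - \tfrac{\alpha}{2}r^2$; substituting this order-$3$ expansion of $h(r)$ and dividing by $r^2 > 0$ yields
\[
\frac{h''(0)}{2} + \frac{r}{6}\,h'''(0) \;<\; -\frac{\alpha}{2} + \frac{\SmCubic}{24}\,r^2 ,
\]
which is the only place \eqref{eq:exploit-pair-cond} enters.

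The heart of the argument is the convex combination with weights $\mu_1 = \tfrac{\eta'}{\eta+\eta'}$ on $h(-\eta)$ and $\mu_2 = \tfrac{\eta}{\eta+\eta'}$ on $h(s)$, compared against the same-weight combination of $f(v)$ and $f(u)$:
\[
\mu_1 h(-\eta) + \mu_2 h(s) - \mu_1 f(v) - \mu_2 f(u) = \mu_1\big(h(-\eta) - h(0)\big) + \mu_2\big(h(s) - h(r)\big).
\]
The claim, which I would verify by a short direct computation, is that the choice $s = \sqrt{\eta(\eta+r)}$ is precisely what makes the expansion of the right side around $0$ collapse: (a) the coefficient of $h'(0)$ equals $-\mu_1\eta + \mu_2\eta' = 0$; and (b) the $h''(0)$ and $h'''(0)$ terms combine into $P\big(\tfrac12 h''(0) + \tfrac{r}{6}h'''(0)\big)$, where $P := \mu_1\eta^2 + \mu_2(s^2 - r^2) = \tfrac{\eta\eta'(\eta + s + r)}{\eta+\eta'} > 0$, using the identities $s^2 - \eta^2 = \eta r$ and $s - r = \eta'$. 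Feeding in the displayed bound on $\tfrac12 h''(0) + \tfrac{r}{6}h'''(0)$ then gives
\[
\mu_1 h(-\eta) + \mu_2 h(s) - \mu_1 f(v) - \mu_2 f(u) \;<\; P\cdot\big(\tfrac{\SmCubic}{24}r^2 - \tfrac{\alpha}{2}\big) + \mu_1 R(-\eta) + \mu_2\big(R(s) - R(r)\big).
\]

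To close, I would establish the elementary bounds $\eta^2 \le P \le \tfrac65\eta^2$ — substituting $w = \sqrt{1 + r/\eta}\in[1,\sqrt{3/2}]$, the lower bound reduces to $(w-1)(w^2-2)\le 0$ — and bound the three remainder terms using $r \le \eta/2$, $s \le \sqrt{3/2}\,\eta$, and the step-size hypothesis $\eta \le \sqrt{2\alpha/\SmCubic}$, i.e. $\SmCubic\eta^2 \le 2\alpha$, so that $\SmCubic r^2 P$ and the $R(\cdot)$ contributions are each $O(\alpha\eta^2)$ with small enough constants; this makes the right side above at most $-c\,\alpha\eta^2$ for a constant $c \ge \tfrac14$. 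Hence $f(z) = \min\{h(-\eta),h(s)\} \le \mu_1 h(-\eta) + \mu_2 h(s) < \mu_1\big(f(v) - c\,\alpha\eta^2\big) + \mu_2\big(f(u) - c\,\alpha\eta^2\big)$, which is at most $\max\{f(v) - \tfrac{\alpha}{4}\eta^2,\ f(u) - \tfrac{\alpha}{12}\eta^2\}$ since it is a convex combination of $f(v) - \tfrac{\alpha}{4}\eta^2$ and $f(u) - \tfrac{\alpha}{12}\eta^2$; this is \eqref{eq:exploit-pair-progress-cubic}. I expect the main obstacle to be exactly this constant bookkeeping: the useful term is only $-\tfrac12 P \ge -\tfrac12\eta^2$, while the Taylor remainders can be as large as a constant times $\SmCubic\eta^4 \le 2\alpha\eta^2$, so the surviving slack is narrow and the remainder estimates (in particular handling $R(s)$ and $R(r)$ together rather than separately) must be done with care to keep $c$ above $\tfrac14$. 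The two algebraic collapses in (b) and the inequality $P \ge \eta^2$ are routine but must be carried out precisely, since the whole argument is tailored to the exact value of $\eta'$ on line~\ref{line:eta-prime}.
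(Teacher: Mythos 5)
Your argument is correct, and it takes a genuinely different route from the paper. The paper proves this lemma by reducing to a normalized scalar statement (Lemma~\ref{lem:cubic-approx-nc}) for $h(\theta)=f(\tfrac{1+\theta}{2}u+\tfrac{1-\theta}{2}v)$ on $[-1,1]$: it replaces $h$ by its cubic Taylor model $\tilde h$, and then splits into two cases according to the sign of $\tfrac{\tilde h(1)-\tilde h(-1)}{2}+\tfrac16 h'''(0)\rho(2+\rho)$, showing that in one case the backward step from $v$ and in the other the forward step from $u$ achieves the decrease; the $\max$ in \eqref{eq:exploit-pair-progress-cubic} is exactly the record of this case split. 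You avoid both the normalization and the case analysis by bounding the single convex combination $\tfrac{\eta'}{\eta+\eta'}f(v_-)+\tfrac{\eta}{\eta+\eta'}f(u_+)$: the identities you flag do check out — with $s=r+\eta'=\sqrt{\eta(\eta+r)}$ the first-order terms cancel and the second/third-order terms collapse to $P\bigl(\tfrac12 h''(0)+\tfrac{r}{6}h'''(0)\bigr)$ with $P=\tfrac{\eta\eta'(\eta+s+r)}{\eta+\eta'}\ge\eta^2$ (your reduction to $(w-1)(w^2-2)\le 0$ on $w\in[1,\sqrt{3/2}]$ is right), and \eqref{eq:exploit-pair-cond} gives $\tfrac12 h''(0)+\tfrac{r}{6}h'''(0)<-\tfrac{\alpha}{2}+\tfrac{\SmCubic}{24}r^2\le-\tfrac{23\alpha}{48}$. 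The constant bookkeeping you worried about closes even with the crude separate bounds $|R(-\eta)|\le\tfrac{\SmCubic}{24}\eta^4$, $|R(s)|\le\tfrac{9\SmCubic}{96}\eta^4$, $|R(r)|\le\tfrac{\SmCubic}{384}\eta^4$: the remainder contribution is at most $\tfrac{37}{384}\SmCubic\eta^4\le\tfrac{37}{192}\alpha\eta^2$, so the combination drops by at least $\bigl(\tfrac{23}{48}-\tfrac{37}{192}\bigr)\alpha\eta^2=\tfrac{55}{192}\alpha\eta^2>\tfrac{\alpha}{4}\eta^2$, and your final min-$\le$-average-$\le$-max chain is valid. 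What each approach buys: the paper's case analysis identifies which endpoint improves relative to its own anchor and packages the computation in a reusable normalized lemma (also used for Lemma~\ref{lem:cubic-approx}-style arguments), while yours is shorter, stays in the unnormalized arclength parametrization (so the Lipschitz constant is just $\SmCubic$ rather than the rescaled $\tfrac{\SmCubic}{16}\norm{u-v}^4$), and in fact proves the slightly stronger statement $f(z)\le\tfrac{\eta'}{\eta+\eta'}f(v)+\tfrac{\eta}{\eta+\eta'}f(u)-\tfrac{55}{192}\alpha\eta^2$, at the price of a narrower constant margin. One cosmetic slip: your remark that the useful term is ``$-\tfrac12 P\ge-\tfrac12\eta^2$'' has the inequality reversed ($P\ge\eta^2$ makes the useful term at most $-\tfrac{\alpha}{2}\eta^2$, which is what you want); this does not affect the argument.
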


We prove Lemma~\ref{lem:exploit-pair-cubic} in
Section~\ref{app:proof-exploit-pair-cubic} in the supplementary material;
it is essentially a more technical version of the proof of
Lemma~\ref{lem:exploit-point-cubic}, where we
address the asymmetry of
condition~\eqref{eq:exploit-pair-cond} by taking steps of
different sizes from $u$ and $v$.

\subsection{Bounding the function values of the iterates using cubic 
interpolation}\label{sub:bound-third}

An important difference between Lemmas~\ref{lem:exploit-pair}
and~\ref{lem:exploit-pair-cubic} is that the former guarantees lower objective 
value than $f(u)$, while the
latter only improves $\max\{f(v),f(u)\}$. We invoke these lemmas for $v=x_j$
for some $x_j$ produced by \callAGDUPG{}, but
Corollary~\ref{coro:witness-testimony} only bounds the function value at
$y_j$ and $w$; $f(x_j)$ might be much larger than $f(y_0)$, rendering the
progress guaranteed by Lemma~\ref{lem:exploit-pair-cubic}
useless. Fortunately, we are able show that whenever this happens, there
must be a point on the line that connects $x_j, y_j$ and $y_{j-1}$ for which
the function value is much lower than $f(y_0)$. We take advantage of this
fact in Alg.~\ref{alg:FBT}, where we modify \callFB{} to consider additional
points, so that whenever the iterate it finds is not much better than $y_0$, 
then $f(x_j)$ is guaranteed to be close to $f(y_0)$. We formalize this claim
in the following lemma, which we prove in
Section~\ref{app:proof-progress-value-cubic}.

\begin{algorithm}[tb]
	\caption{\callFBT{$f$, $y_0^t$, $u$, $v$}\label{alg:FBT}}
	\begin{algorithmic}[1]
		\State Let $0\le j < t$ be such that $v=x_j$%
		\State $c_j \gets (y_j + y_{j-1})/2$ ~~\textbf{if} $j>0$ \textbf{else} $y_0$
		\State $q_j \gets -2y_i + 3y_{j-1}$ ~~\textbf{if} $j>0$ \textbf{else} $y_0$
		\State \Return $\argmin_{z\in\{y_0,...,y_t,c_j,q_j,u\}}f(z)$
	\end{algorithmic}
\end{algorithm}

\begin{restatable}{lemma}{lemProgressValueCubic} 
  \label{lem:progress-value-cubic}
  Let $f$ be $\SmGrad$-smooth and have $\SmCubic$-Lipschitz continuous
  third-order derivatives, and let $\tau \le \sqrt{{\alpha}/{(16\SmCubic)}}$
  with $\tau, \alpha, \SmGrad, \SmCubic > 0$. Consider \callMAIN{} with
  \callFB{} replaced by \callFBT{}.  At any iteration, if $u, v\neq\NULL$
  and the best iterate $b^{(1)}$ satisfies $f(b^{(1)}) \ge f(y_0) - \alpha
  \tau^2$ then, %
  \begin{equation*}
    f(v) \le  f(y_0) + 14\alpha\tau^2.
  \end{equation*}
\end{restatable}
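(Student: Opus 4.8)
The key structural observation is that $y_{j-1}$, $y_j$, and $x_j$ are collinear: since $x_j = y_j + \omega(y_j - y_{j-1})$ with the AGD momentum parameter $\omega \in (0,1)$, the point $x_j$ lies on the line through $y_{j-1}$ and $y_j$, as do the two auxiliary points $c_j = (y_j + y_{j-1})/2$ and $q_j = 3y_{j-1} - 2y_j$ that \callFBT{} samples. The plan is to restrict $f$ to this line, invoke third-order smoothness to approximate it by the cubic polynomial interpolating its values at the four known points $q_j, y_{j-1}, c_j, y_j$, and then control $f(x_j)$ by extrapolating that cubic. If $j = 0$ then $v = x_0 = y_0$ and there is nothing to prove, and if $y_j = y_{j-1}$ then $x_j = y_j$ and $f(v) = f(y_j) \le f(y_0)$ by Corollary~\ref{coro:witness-testimony}; so assume neither. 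Arguing exactly as in the proof of Lemma~\ref{lem:progress-distance} — combining $\fhat(y_i) \le \fhat(y_0) = f(y_0)$ from Corollary~\ref{coro:witness-testimony} with $f(y_i) \ge f(b^{(1)}) \ge f(y_0) - \alpha\tau^2$, the latter because $b^{(1)}$ minimizes $f$ over a set containing each $y_i$ — gives $\norm{y_j - y_0} \le \tau$ and $\norm{y_{j-1} - y_0} \le \tau$, hence $r := \norm{y_j - y_{j-1}} \le 2\tau$. Writing $\ncdirection = (y_j - y_{j-1})/r$ and $h(\theta) = f(y_{j-1} + \theta\ncdirection)$, the known data are $h(-2r) = f(q_j)$, $h(0) = f(y_{j-1})$, $h(r/2) = f(c_j)$, $h(r) = f(y_j)$, and the target value is $h((1+\omega)r) = f(x_j) = f(v)$.

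Next I would bound the cubic interpolation error. Let $P$ be the cubic interpolating $h$ at the nodes $\{-2r, 0, r/2, r\}$ and $W(\theta) = (\theta + 2r)\,\theta\,(\theta - r/2)(\theta - r)$ the associated node polynomial. The classical remainder estimate $|h(\theta) - P(\theta)| \le \frac{\SmCubic}{4!}|W(\theta)|$ survives the weaker hypothesis that $h'''$ is merely $\SmCubic$-Lipschitz (rather than $h \in C^4$): fixing $\theta$ and the constant $\lambda$ so that $h - P - \lambda W$ also vanishes at $\theta$, three applications of Rolle's theorem leave two points where the third derivative of $h - P - \lambda W$ vanishes, and since $P'''$ is constant while $W'''$ is affine with leading coefficient $4!$, subtracting the two resulting identities yields $|\lambda| \le \SmCubic/4!$. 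Evaluating at $\theta = (1+\omega)r$ and using $\omega \in (0,1)$ and $r \le 2\tau$ gives $|W((1+\omega)r)| = (3+\omega)(1+\omega)(\tfrac{1}{2}+\omega)\,\omega\, r^4 \le 12 r^4 \le 192\tau^4$, so together with the hypothesis $\tau^2 \le \alpha/(16\SmCubic)$ we obtain $|f(v) - P((1+\omega)r)| \le 8\SmCubic\tau^4 \le \tfrac{\alpha}{2}\tau^2$.

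Finally I would bound the interpolant itself. Write $P((1+\omega)r) = \mu_0 f(q_j) + \mu_1 f(y_{j-1}) + \mu_2 f(c_j) + \mu_3 f(y_j)$ in Lagrange form, where $\mu_i$ is the $i$th Lagrange basis polynomial evaluated at $(1+\omega)r$, so $\sum_i \mu_i = 1$; a direct computation using $\omega \in (0,1)$ shows that the coefficients $\mu_0, \mu_2$ of the outer nodes $-2r$ and $r/2$ (that is, of $f(q_j)$ and $f(c_j)$) are nonpositive with $|\mu_0| + |\mu_2| \le 13$, while $\mu_1, \mu_3 \ge 0$. Since $f(q_j), f(c_j) \ge f(b^{(1)}) \ge f(y_0) - \alpha\tau^2$ and $f(y_{j-1}), f(y_j) \le f(y_0)$ (again by Corollary~\ref{coro:witness-testimony}), replacing each value by the extreme compatible with the sign of its coefficient gives $P((1+\omega)r) \le f(y_0)\sum_i \mu_i + \alpha\tau^2(|\mu_0| + |\mu_2|) \le f(y_0) + 13\alpha\tau^2$; adding the interpolation-error bound then yields $f(v) \le f(y_0) + \tfrac{27}{2}\alpha\tau^2 \le f(y_0) + 14\alpha\tau^2$. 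I expect the main obstacle — and the reason \callFBT{} samples precisely $c_j$ and $q_j$ — to be arranging the node placement so that the unavoidably large Lagrange coefficients (unavoidable because $(1+\omega)r$ lies outside the node interval) still combine with the one-sided bounds on the sampled function values to leave a small enough overall constant; the only other delicate point is the remainder estimate under a mere Lipschitz, rather than $C^4$, assumption on $h'''$.
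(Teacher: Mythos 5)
Your proof is correct and takes essentially the same route as the paper: you restrict $f$ to the line through $y_{j-1},y_j$, use the four sampled values at $q_j, y_{j-1}, c_j, y_j$ to pin down a cubic, extrapolate to $x_j$ with an error controlled by the Lipschitz third derivative, and exploit the sign pattern of the coefficients together with the one-sided bounds from Corollary~\ref{coro:witness-testimony} and the definition of $b^{(1)}$. The only (immaterial) difference is that you use the Lagrange interpolant with a Rolle-based remainder bound, whereas the paper expresses the third-order Taylor polynomial at $y_j$ through the same four node values (Lemma~\ref{lem:cubic-approx}) — indeed your Lagrange coefficients reproduce the paper's constants $0.2$, $12.8$, $6$ exactly.
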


We now explain the idea behind the proof of
Lemma~\ref{lem:progress-value-cubic}. Let $0\le j < t$ be such that $v=x_j$
(such $j$ always exists by Corollary~\ref{coro:witness-testimony}). If $j=0$
then $x_j = y_0$ and the result is trivial, so we assume $j\ge1$. Let $f_r :
\R\to\R$ be the restriction of $f$ to the line containing $y_{j-1}$ and
$y_j$ (and also $q_j, c_j$ and $x_j$). Suppose now that $f_r$ is a cubic
polynomial. Then, it is completely determined by its values at any 4 points,
and $f(x_j)=-C_1 f(q_j)+C_2 f(y_{j-1}) -C_3 f(c_j) + C_4 f(y_j)$ for $C_j
\ge 0$ independent of $f$.  By substituting the bounds $f(y_{j-1}) \vee
f(y_j) \le f(y_0)$ and $f(q_j) \wedge f(c_j) \ge f(b^{(1)}) \ge f(y_0) -
\alpha\tau^2$, we obtain an upper bound on $f(x_j)$ when $f_r$ is cubic.  To
generalize this upper bound to $f_r$ with Lipschitz third-order derivative,
we can simply add to it the approximation error of an appropriate
third-order Taylor series expansion, which is bounded by a term proportional
to $\SmCubic\tau^4 \le \alpha\tau^2/16$. 

\subsection{An improved rate of convergence}\label{sub:improve-third}

With our algorithmic and analytic upgrades established, we are ready to state 
the enhanced performance guarantees for \callMAIN{}, where from here on we 
assume that \callENCPT{} and \callFBT{} subsume \callENCP{} and \callFB{}, 
respectively.

\begin{restatable}{lemma}{lemMainProgressThirdOrder}
  \label{lem:main-progress-third}
  Let \func be $\SmGrad$-smooth and have $\SmCubic$-Lipschitz 
  continuous third-order derivatives, let $\epsilon, \alpha > 0$ and 
  $p_0\in\R^\Dim$. If 
  $p_0^K$ is the sequence of iterates produced by \callMAIN{$f$, 
    $p_0$, 
    $\SmGrad$, $\epsilon$, $\alpha$, $\sqrt{\frac{2\alpha}{\SmCubic}}$}, 
  then for 
  every $1\le k<K$,
  \begin{equation}\label{eq:main-prog-third}
    f(p_{k}) \le f(p_{k-1}) -  \min\left\{ \frac{\epsilon^2}{5\alpha}, 
    \frac{\alpha^2}{32\SmCubic}\right\}.
  \end{equation} 
\end{restatable}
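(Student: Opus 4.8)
The plan is to mimic the proof of Lemma~\ref{lem:main-progress-2nd} almost line for line, swapping the second-order negative-curvature tools for the third-order ones. Fix $1\le k<K$ and let $y_0^t, x_0^t, u, v$ denote the outputs of \callAGDUPG{} in iteration $k$, splitting into the cases $u,v=\NULL$ and $u,v\neq\NULL$. The case $u,v=\NULL$ is \emph{unchanged}: this branch never triggers a negative-curvature step, so the proximal-point argument from Lemma~\ref{lem:main-progress-2nd} (using $\|\grad\fhat(p_k)\|\le\epsilon/10$, $\|\grad f(p_k)\|>\epsilon$ since $k<K$, and $\fhat(p_k)\le\fhat(p_{k-1})$) yields $f(p_k)\le f(p_{k-1})-\epsilon^2/(5\alpha)$ verbatim. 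So all of the work is in the case $u,v\neq\NULL$.

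In that case, Corollary~\ref{coro:witness-testimony} applied to $\fhat$ (which is $(\SmGrad+2\alpha)$-smooth) together with the same $\fhat = f+\alpha\|\cdot-p_{k-1}\|^2$ algebraic identity used in Lemma~\ref{lem:main-progress-2nd} shows that $u,v$ satisfy the convexity violation~\eqref{eq:exploit-pair-cond} for $f$ with parameter $\alpha$. I would set $\eta\defeq\sqrt{2\alpha/\SmCubic}$ (the step size \callMAIN{} is called with) and $\tau\defeq\eta/8$. This choice of $\tau$ is engineered so that \emph{both} $\tau\le\sqrt{\alpha/(16\SmCubic)}$ (the hypothesis of Lemma~\ref{lem:progress-value-cubic}) and $4\tau\le\eta/2$ (the distance requirement of Lemma~\ref{lem:exploit-pair-cubic}) hold. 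I also note that Lemma~\ref{lem:progress-distance} still applies: replacing \callFB{} by \callFBT{} only enlarges the set minimized over to obtain $b^{(1)}$, so the hypothesis $f(b^{(1)})\ge f(y_0)-\alpha\tau^2$ is no weaker, and its conclusion concerns only $y_i,x_i$ and the pair $u,v$.

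Now split on $b^{(1)}$. If $f(b^{(1)})\le f(y_0)-\alpha\tau^2$, then since $f(p_k)\le f(b^{(1)})$ and $\alpha\tau^2=\alpha\eta^2/64=\alpha^2/(32\SmCubic)$, the bound~\eqref{eq:main-prog-third} is immediate. Otherwise $f(b^{(1)})\ge f(y_0)-\alpha\tau^2$, and then Lemma~\ref{lem:progress-distance} gives $\|u-v\|\le4\tau\le\eta/2$, Lemma~\ref{lem:progress-value-cubic} gives $f(v)\le f(y_0)+14\alpha\tau^2$, and Corollary~\ref{coro:witness-testimony} gives $f(u)\le\fhat(u)\le\fhat(y_0)=f(y_0)=f(p_{k-1})$. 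Applying Lemma~\ref{lem:exploit-pair-cubic} (whose hypotheses $\|u-v\|\le\eta/2$ and $\eta\le\sqrt{2\alpha/\SmCubic}$ now hold),
\[
f(b^{(2)})\le\max\left\{f(v)-\frac{\alpha}{4}\eta^2,\ f(u)-\frac{\alpha}{12}\eta^2\right\}.
\]
Since $\tau^2=\eta^2/64$ we get $f(v)-\alpha\eta^2/4\le f(y_0)+14\alpha\eta^2/64-16\alpha\eta^2/64=f(y_0)-\alpha\eta^2/32$, while $f(u)-\alpha\eta^2/12\le f(y_0)-\alpha\eta^2/32$; hence $f(p_k)\le f(b^{(2)})\le f(y_0)-\alpha\eta^2/32=f(p_{k-1})-\alpha^2/(16\SmCubic)\le f(p_{k-1})-\alpha^2/(32\SmCubic)$, establishing~\eqref{eq:main-prog-third}.

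I expect the only real subtlety to be bookkeeping rather than a new idea. Unlike Lemma~\ref{lem:exploit-pair}, Lemma~\ref{lem:exploit-pair-cubic} only improves $\max\{f(u),f(v)\}$, not $f(u)$ alone, so the argument genuinely needs the a~priori bound $f(v)\le f(y_0)+14\alpha\tau^2$ supplied by Lemma~\ref{lem:progress-value-cubic} (and hence the switch to \callFBT{}), and one must pick $\tau$ small enough that this $14\alpha\tau^2$ slack is dominated by the $\frac{\alpha}{4}\eta^2$ gain of the negative-curvature step while still meeting the hypotheses of both Lemma~\ref{lem:exploit-pair-cubic} and Lemma~\ref{lem:progress-value-cubic}. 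The choice $\tau=\eta/8$ does exactly this; everything else is the same proximal/telescoping scaffolding as in the second-order proof.
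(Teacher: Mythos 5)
Your proposal is correct and follows essentially the same route as the paper's own proof: your choice $\tau=\eta/8=\sqrt{\alpha/(32\SmCubic)}$ is exactly the paper's, and you combine Lemma~\ref{lem:progress-distance}, Lemma~\ref{lem:progress-value-cubic}, Corollary~\ref{coro:witness-testimony}, and Lemma~\ref{lem:exploit-pair-cubic} in the same way, arriving at the same per-iteration progress $\alpha^2/(16\SmCubic)$ in the non-\NULL{} case and $\alpha^2/(32\SmCubic)$ in the $b^{(1)}$ case. The only cosmetic difference is that you make explicit the (correct) observation that Lemma~\ref{lem:progress-distance} still applies after replacing \callFB{} by \callFBT{}, which the paper leaves implicit.
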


The proof of Lemma~\ref{lem:main-progress-third} is essentially identical to
the proof of Lemma~\ref{lem:main-progress-2nd}, where we replace
Lemma~\ref{lem:exploit-pair} with Lemmas~\ref{lem:exploit-pair-cubic}
and~\ref{lem:progress-value-cubic} and set $\tau =
\sqrt{\alpha/(32\SmCubic)}$.  For completeness, we give a full proof in
Section~\ref{app:proof-main-progress-third}.  The gradient evaluation
complexity guarantee for third-order smoothness then follows precisely as in
our proof of Theorem~\ref{thm:main-result-2nd};
 see 
Sec.~\ref{app:proof-main-result-third} for a proof of the following

\begin{restatable}{theorem}{thmMainThird}\label{thm:main-result-third}
  Let \func be $\SmGrad$-smooth and have $\SmCubic$-Lipschitz 
  continuous third-order derivatives. Let $p_0\in\R^\Dim$, $\DeltaF = f(p_0) 
  - 
  \inf_{z\in\R^\Dim}f(z)$ and $0 < \epsilon^{2/3} \le 
  \min\{\DeltaF^{1/2}\SmCubic^{1/6}, 
  \SmGrad/(8\SmCubic^{1/3})\}$. If we set
  \begin{equation}\label{eq:alpha-choice-third}
    \alpha = 2\SmCubic^{1/3}\epsilon^{2/3},
  \end{equation}
  \callMAIN{$f$, $p_0$, $\SmGrad$, $\epsilon$, $\alpha$, 
    $\sqrt{\frac{2\alpha}{\SmCubic}}$} finds a point $p_K$ such that 
  $\norm{\grad{f(p_K})} \le \epsilon$ 
  and requires at most
  \begin{equation}\label{eq:final-bound-thirs}
    20 \cdot 
    \frac{\DeltaF\SmGrad^{1/2}\SmCubic^{1/6}}{\epsilon^{5/3}}\log \left(
    \frac{500\SmGrad\DeltaF}{\epsilon^2}\right)
  \end{equation}
  gradient evaluations.
\end{restatable}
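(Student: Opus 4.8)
The plan is to mimic the proof of Theorem~\ref{thm:main-result-2nd} verbatim, substituting the third-order progress guarantee of Lemma~\ref{lem:main-progress-third} for the second-order one of Lemma~\ref{lem:main-progress-2nd}. As before, the total number of gradient evaluations is $2KT$, where $K$ is the number of outer iterations of \callMAIN{} and $T$ bounds the number of \callAGDUPG{} steps per outer call; the factor $2$ accounts for evaluating the gradient at both $x_0^{t-1}$ and $y_1^t$ each inner iteration (and the witness-regeneration remark costs only another constant, absorbed into the leading $20$).

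First I would bound $K$. Telescoping the per-iteration decrease~\eqref{eq:main-prog-third} against $\DeltaF$ gives
\begin{equation*}
  \DeltaF \ge (K-1)\min\left\{\frac{\epsilon^2}{5\alpha}, \frac{\alpha^2}{32\SmCubic}\right\}.
\end{equation*}
Substituting $\alpha = 2\SmCubic^{1/3}\epsilon^{2/3}$ makes both terms of order $\epsilon^{4/3}\SmCubic^{-1/3}$: one checks $\frac{\epsilon^2}{5\alpha} = \frac{\epsilon^{4/3}}{10\SmCubic^{1/3}}$ and $\frac{\alpha^2}{32\SmCubic} = \frac{\epsilon^{4/3}}{8\SmCubic^{1/3}}$, so the minimum is $\frac{\epsilon^{4/3}}{10\SmCubic^{1/3}}$, yielding $K \le 1 + 10\DeltaF\SmCubic^{1/3}\epsilon^{-4/3}$. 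The hypothesis $\epsilon^{2/3} \le \DeltaF^{1/2}\SmCubic^{1/6}$, i.e. $1 \le \DeltaF\SmCubic^{1/3}\epsilon^{-4/3}$, then absorbs the $1$ to give $K \le 11\DeltaF\SmCubic^{1/3}\epsilon^{-4/3}$.

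Next I would bound $T$ using Corollary~\ref{coro:witness-testimony}. Exactly as in the second-order proof, for any $z$ we have $\psi(z) = f(y_0) - f(z) - \frac{\alpha}{2}\norm{z-y_0}^2 \le \DeltaF$, and \callAGDUPG{} is invoked with accuracy $\localeps = \epsilon/10$, smoothness $\localsmgrad = \SmGrad + 2\alpha$ and strong-convexity parameter $\localstrconv = \alpha = 2\SmCubic^{1/3}\epsilon^{2/3}$. Plugging into~\eqref{eq:agdupg-runtime} gives
\begin{equation*}
  T \le 1 + \sqrt{2 + \frac{\SmGrad}{2\SmCubic^{1/3}\epsilon^{2/3}}}\,\log_+\!\left(\frac{200(\SmGrad + 4\SmCubic^{1/3}\epsilon^{2/3})\DeltaF}{\epsilon^2}\right).
\end{equation*}
The second hypothesis $\epsilon^{2/3} \le \SmGrad/(8\SmCubic^{1/3})$, i.e. $1 \le \SmGrad/(8\SmCubic^{1/3}\epsilon^{2/3})$, lets me absorb the constants inside the square root and the logarithm's argument: the square root becomes $\le \sqrt{3/4}\,\SmGrad^{1/2}\SmCubic^{-1/6}\epsilon^{-1/3}$ and $\DeltaF\SmGrad\epsilon^{-2} \ge 8$ removes the $\log_+$ subscript, so $T \le \sqrt{3/4}\,\SmGrad^{1/2}\SmCubic^{-1/6}\epsilon^{-1/3}\log(500\SmGrad\DeltaF/\epsilon^2)$. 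Multiplying $2KT$ and bounding $2 \cdot 11 \cdot \sqrt{3/4} < 20$ yields the claimed bound~\eqref{eq:final-bound-thirs}. The only genuinely new arithmetic relative to the second-order case is verifying that the two exponents of $\epsilon$ in $K$ and in $1/T$ add to $4/3 + 1/3 = 5/3$ and that the chosen $\alpha$ exponent $2/3$ balances the two terms in~\eqref{eq:main-prog-third}; this balancing is the one place to be careful, but it is routine once the substitution $\alpha = 2\SmCubic^{1/3}\epsilon^{2/3}$ is made, so I do not anticipate a real obstacle.
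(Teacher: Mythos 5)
Your proposal is correct and follows essentially the same route as the paper's proof: bound the outer iteration count $K$ by telescoping Lemma~\ref{lem:main-progress-third} with $\alpha = 2\SmCubic^{1/3}\epsilon^{2/3}$, bound the inner iteration count $T$ via $\psi(z)\le\DeltaF$ and the runtime guarantee~\eqref{eq:agdupg-runtime} of Corollary~\ref{coro:witness-testimony}, simplify both using the two hypotheses on $\epsilon$, and multiply $2KT$ with $22\sqrt{3/4}<20$. The arithmetic (including $\min\{\epsilon^2/(5\alpha),\alpha^2/(32\SmCubic)\} = \epsilon^{4/3}/(10\SmCubic^{1/3})$ and the $500$ inside the logarithm) matches the paper's computations.
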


\arxiv{
While achieving the guarantees that Theorem~\ref{thm:main-result-third} 
provides requires some modification of our algorithms, these do not
come at the expense of the convergence guarantees of
Theorem~\ref{thm:main-result-2nd} when we have only second order
smoothness. That is, the results of Theorem~\ref{thm:main-result-2nd}
remain valid even with the algorithmic modifications of this section,
and Alg.~\ref{alg:MAIN} transitions
between 
smoothness regimes by varying the scaling of $\alpha$ and $\eta$ 
with $\epsilon$.
}

%
%
%
%
%
%
%
%
%
%
%
%
%
%
%
%
%
%
%
%
%
%
%
%
%
%
%
%
%
%
%
%
%
%
%
%
%
%
%
%
%
%
%
%
%
%
%
%
%
%
%
%
%
%
%
%
%
%
%
%
%
%
%
%
%
%
%
%
%
%
%
%
%
%
%
%
%
%
%
%
%
%
%
%
%
%
%
%
%
%
%
%
%
%
%
%
%
%
%
%
%
%
%
%
%
%
%
%
%
%
%
%
%
%
%
%
%

\section{Preliminary experiments}
\label{sec:experiments}

The primary purpose of this paper is to demonstrate the feasibility of
acceleration for non-convex problems using only first-order information. 
Given the long history of development of careful schemes for non-linear 
optimization, it is unrealistic to expect a simple implementation of the 
momentum-based Algorithm~\ref{alg:MAIN} to outperform state-of-the-art 
methods such as non-linear conjugate gradients and L-BFGS. It is important, 
however, to understand the degree of non-convexity in problems we 
encounter in practice, and to investigate the efficacy of the negative curvature 
detection-and-exploitation scheme we propose.

Toward this end, we present two
experiments: (1) fitting a non-linear regression model and (2) training a small
neural network.  In these experiments we compare a basic implementation of
Alg.~\ref{alg:MAIN} with a number baseline optimization methods: gradient
descent (GD), non-linear conjugate gradients (NCG) \cite{hager2006survey},
Accelerated Gradient Descent (AGD) with adaptive restart
\cite{o2015adaptive} (RAGD), and a crippled version of Alg.~\ref{alg:MAIN}
without negative curvature exploitation (C-Alg.~\ref{alg:MAIN}). We compare
the algorithms on the number of gradient steps, but note that the number of
oracle queries per step varies between methods.  We provide implementation 
details in Section~\ref{app:implementation}.

\arxiv{
  \begin{figure}[tb]
    \centering
    \begin{minipage}[t]{0.32\textwidth}
      \centering
      \includegraphics[width=0.9\columnwidth]{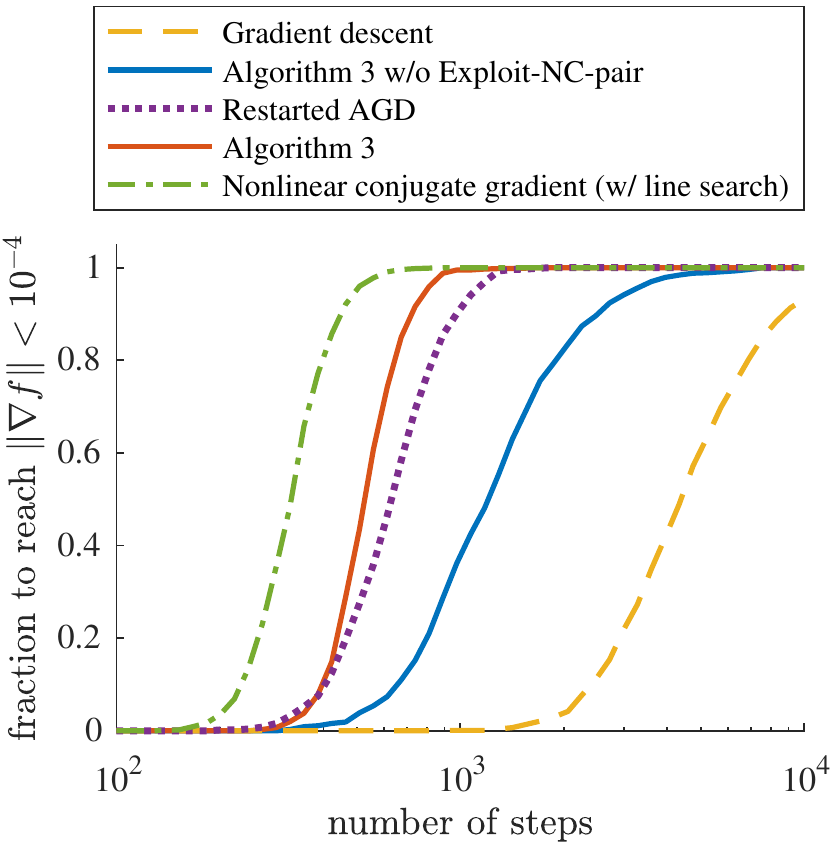}
      
      \textbf{(a)}
    \end{minipage}
    \begin{minipage}[t]{0.32\textwidth}
      \centering
      \includegraphics[width=0.9\columnwidth]{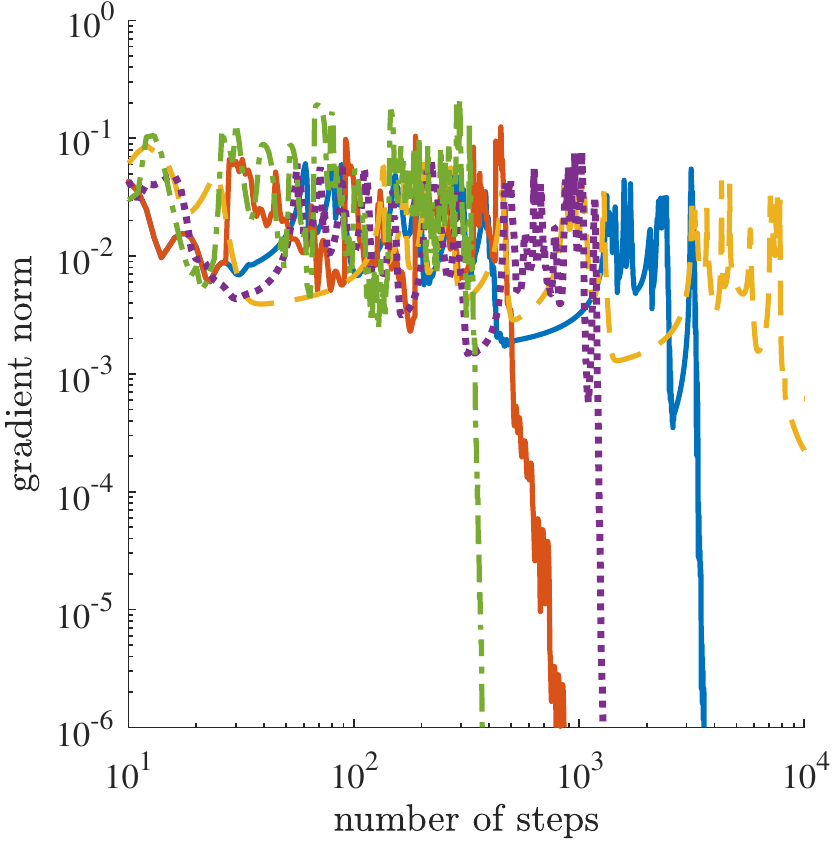}
      
      \textbf{(b)}
    \end{minipage}
    \begin{minipage}[t]{0.32\textwidth}
      \centering
      \includegraphics[width=0.9\columnwidth]{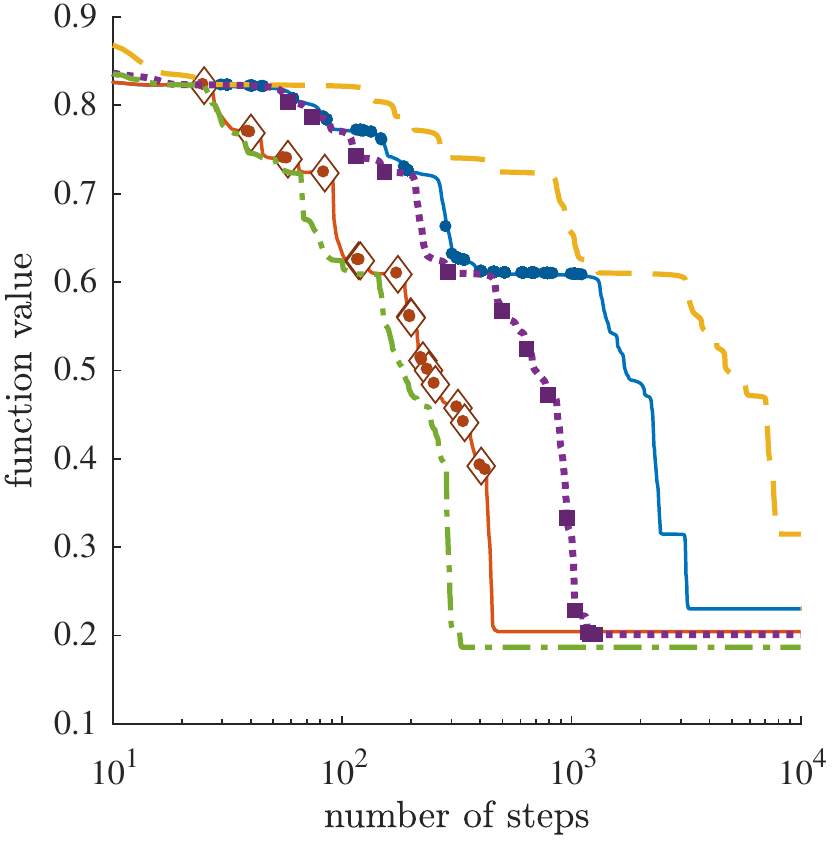}
      
      \textbf{(c)}
    \end{minipage}
    
    \caption{Performance on a non-convex regression problem. \textbf{(a)} 
      Cumulative distribution of number of steps required to achieve gradient 
      norm $<10^{-4}$, over 1,000 random problem instances \textbf{(b)} 
      Gradient norm trace for a representative instance \textbf{(c)} Function 
      value trace for the same instance. For 
      Alg.~\ref{alg:MAIN}, the dots correspond to negative curvature detection 
      and  
      the   
      diamonds correspond to negative curvature exploitation 
      (\ie when $f(b^{(2)}) < f(b^{(1)})$). For RAGD, and the squares indicate 
      restarts due to non-monotonicity. }\label{fig:tukey-problem}
  \end{figure}
}

\arxiv{

  For our first experiment, we study robust linear regression with the
  smooth biweight loss~\cite{beaton1974fitting},
  \begin{equation*}
    f(x) \defeq \frac{1}{m}
    \sum_{i=1}^{m} \phi(a_i^T x - b_i)
    ~~ \mbox{where} ~~
    \phi(\theta) = \frac{\theta^2}{1 + \theta^2}.
  \end{equation*}
  The function $\phi$ is a robust modification of the quadratic loss; it is
  approximately quadratic for small errors, but insensitive to larger
  errors.  For 1,000 independent experiments, we randomly generate problem
  data to create a highly non-convex problem as follows.  We set $\Dim = 30$
  and $m = 60$, and we draw $a_i \simiid \normal(0, I_\Dim)$. We generate
  $b$ as follows. We first draw a ``ground truth'' vector $z \sim \normal(0,
  4 I_\Dim)$.  We then set $b = A z + 3 \nu_1 + \nu_2$, where $\nu_1 \sim
  \normal(0, I_m)$ and the elements of $\nu_2$ are
  i.i.d.\ Bernoulli$(0.3)$. These parameters
  make the problem substantially non-convex.

}
In Figure~\ref{fig:tukey-problem} we plot
aggregate convergence time statistics, as well as gradient norm and
function value trajectories for a single representative problem instance.
The figure shows that gradient descent and C-Alg.~\ref{alg:MAIN}
(which does not exploit curvature) converge more
slowly than the other methods. When C-Alg.~\ref{alg:MAIN} stalls it is 
detecting negative curvature, which implies the stalling occurs around saddle 
points. When negative
curvature exploitation is enabled, Alg.~\ref{alg:MAIN} is faster than RAGD, but 
slower than NCG. In this 
highly non-convex problem, different methods often converge to local minima 
with (sometimes significantly) different function values. However, each method 
found the ``best'' local minimum in a similar fraction of the generated 
instances, so there does not appear to be a significant difference in the ability 
of the methods to find ``good'' local minima in this problem ensemble.

\begin{figure}[tb]
  \begin{center}
    \includegraphics[width=0.5\textwidth]{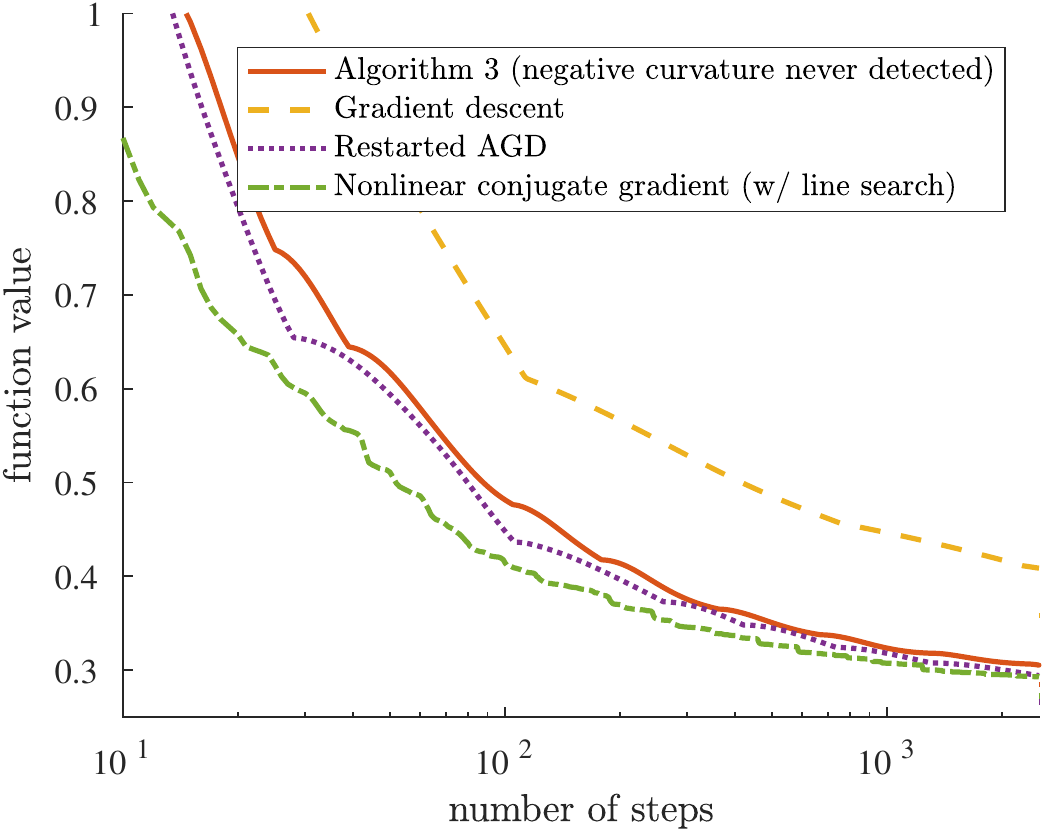}
  \end{center}
  \caption{Performance on neural network training.}\label{fig:ANN}
\end{figure}

For the second experiment we fit a neural network model\footnote{Our
  approach in its current form is inapplicable to training neural networks
  of modern scale, as it requires computation of exact gradients.}
comprising three fully-connected hidden layers containing $20$, $10$ and
$5$ units, respectively, on the MNIST handwritten digits dataset
\cite{lecun1998mnist} (see Section~\ref{app:exp-2}).
Figure~\ref{fig:ANN} shows a substantial performance gap between gradient
descent and the other methods, including Alg.~\ref{alg:MAIN}. However, this
is not due to negative curvature exploitation; in fact, Alg.~\ref{alg:MAIN}
never detects negative curvature in this problem, implying AGD never
stalls. Moreover, RAGD never restarts.  This suggests that the loss function
$f$ is ``effectively convex'' in large portions of the training trajectory,
consistent with the empirical observations
of~\citet{goodfellow2014qualitatively}; this phenomenon may
merit further investigation.

We conclude that our approach can augment AGD in the presence of negative
curvature, but that more work is necessary to make it competitive with
established methods such as non-linear conjugate gradients. For example,
adaptive schemes for setting $\alpha, \eta$ and $\SmGrad$ must be
developed. However, the success of our method may depend on whether AGD
stalls at all in real applications of non-convex optimization.

\arxiv{
  \section*{Acknowledgment}
  OH was supported by the PACCAR INC 
  fellowship. YC and JCD were partially supported by
  the SAIL-Toyota Center for AI Research and
  NSF-CAREER award 1553086. YC was partially supported 
  by the Stanford Graduate Fellowship and the Numerical Technologies 
  Fellowship.
}

\bibliographystyle{abbrvnat}
\bibliography{library-convex-until-guilty}

\begin{thebibliography}{30}
\providecommand{\natexlab}[1]{#1}
\providecommand{\url}[1]{\texttt{#1}}
\expandafter\ifx\csname urlstyle\endcsname\relax
  \providecommand{\doi}[1]{doi: #1}\else
  \providecommand{\doi}{doi: \begingroup \urlstyle{rm}\Url}\fi

\bibitem[Agarwal et~al.(2016)Agarwal, Allen-Zhu, Bullins, Hazan, and
  Ma]{agarwal2016finding}
N.~Agarwal, Z.~Allen-Zhu, B.~Bullins, E.~Hazan, and T.~Ma.
\newblock Finding approximate local minima for nonconvex optimization in linear
  time.
\newblock \emph{arXiv preprint arXiv:1611.01146}, 2016.

\bibitem[Beaton and Tukey(1974)]{beaton1974fitting}
A.~E. Beaton and J.~W. Tukey.
\newblock The fitting of power series, meaning polynomials, illustrated on
  band-spectroscopic data.
\newblock \emph{Technometrics}, 16\penalty0 (2):\penalty0 147--185, 1974.

\bibitem[Beck and Teboulle(2009)]{beck2009gradient}
A.~Beck and M.~Teboulle.
\newblock Gradient-based algorithms with applications to signal recovery.
\newblock \emph{Convex optimization in signal processing and communications},
  pages 42--88, 2009.

\bibitem[Bubeck(2014)]{bubeck2014convex}
S.~Bubeck.
\newblock Convex optimization: Algorithms and complexity.
\newblock \emph{arXiv preprint arXiv:1405.4980}, 2014.

\bibitem[Cand{\`e}s et~al.(2015)Cand{\`e}s, Li, and
  Soltanolkotabi]{CandesLiSo15}
E.~J. Cand{\`e}s, X.~Li, and M.~Soltanolkotabi.
\newblock Phase retrieval via {W}irtinger flow: Theory and algorithms.
\newblock \emph{IEEE Transactions on Information Theory}, 61\penalty0
  (4):\penalty0 1985--2007, 2015.

\bibitem[Carmon et~al.(2016)Carmon, Duchi, Hinder, and
  Sidford]{carmon2016accelerated}
Y.~Carmon, J.~C. Duchi, O.~Hinder, and A.~Sidford.
\newblock Accelerated methods for non-convex optimization.
\newblock \emph{arXiv preprint arXiv:1611.00756}, 2016.

\bibitem[Cartis et~al.(2010)Cartis, Gould, and Toint]{cartis2010complexity}
C.~Cartis, N.~I. Gould, and P.~L. Toint.
\newblock On the complexity of steepest descent, {N}ewton's and regularized
  {N}ewton's methods for nonconvex unconstrained optimization problems.
\newblock \emph{{SIAM} journal on optimization}, 20\penalty0 (6):\penalty0
  2833--2852, 2010.

\bibitem[Glorot and Bengio(2010)]{glorot2010understanding}
X.~Glorot and Y.~Bengio.
\newblock Understanding the difficulty of training deep feedforward neural
  networks.
\newblock In \emph{Aistats}, volume~9, pages 249--256, 2010.

\bibitem[Goodfellow et~al.(2015)Goodfellow, Vinyals, and
  Saxe]{goodfellow2014qualitatively}
I.~J. Goodfellow, O.~Vinyals, and A.~M. Saxe.
\newblock Qualitatively characterizing neural network optimization problems.
\newblock In \emph{International Conference on Learning Representations}, 2015.

\bibitem[Hager and Zhang(2006)]{hager2006survey}
W.~W. Hager and H.~Zhang.
\newblock A survey of nonlinear conjugate gradient methods.
\newblock \emph{Pacific Journal of Optimization}, 2\penalty0 (1):\penalty0
  35--58, 2006.

\bibitem[Kingma and Ba(2015)]{kingma2014adam}
D.~Kingma and J.~Ba.
\newblock Adam: A method for stochastic optimization.
\newblock In \emph{International Conference on Learning Representations}, 2015.

\bibitem[Koren et~al.(2009)Koren, Bell, and Volinsky]{koren2009matrix}
Y.~Koren, R.~Bell, and C.~Volinsky.
\newblock Matrix factorization techniques for recommender systems.
\newblock \emph{Computer}, 42\penalty0 (8), 2009.

\bibitem[Le{C}un et~al.(1998)Le{C}un, Cortes, and Burges]{lecun1998mnist}
Y.~Le{C}un, C.~Cortes, and C.~J. Burges.
\newblock The {MNIST} database of handwritten digits, 1998.

\bibitem[LeCun et~al.(2015)LeCun, Bengio, and Hinton]{lecun2015deep}
Y.~LeCun, Y.~Bengio, and G.~Hinton.
\newblock Deep learning.
\newblock \emph{Nature}, 521\penalty0 (7553):\penalty0 436--444, 2015.

\bibitem[Lee et~al.(2016)Lee, Simchowitz, Jordan, and Recht]{lee2016gradient}
J.~D. Lee, M.~Simchowitz, M.~I. Jordan, and B.~Recht.
\newblock Gradient descent only converges to minimizers.
\newblock In \emph{29th Annual Conference on Learning Theory (COLT)}, pages
  1246---1257, 2016.

\bibitem[Liu and Nocedal(1989)]{liu1989limited}
D.~C. Liu and J.~Nocedal.
\newblock On the limited memory {BFGS} method for large scale optimization.
\newblock \emph{Mathematical Programming}, 45\penalty0 (1):\penalty0 503--528,
  1989.

\bibitem[Mairal et~al.(2008)Mairal, Bach, Ponce, Sapiro, and
  Zisserman]{MairalBaPoSaZi08}
J.~Mairal, F.~Bach, J.~Ponce, G.~Sapiro, and A.~Zisserman.
\newblock Supervised dictionary learning.
\newblock In \emph{Advances in Neural Information Processing Systems 21}, 2008.

\bibitem[Murty and Kabadi(1987)]{MurtyKa87}
K.~Murty and S.~Kabadi.
\newblock Some {NP}-complete problems in quadratic and nonlinear programming.
\newblock \emph{Mathematical Programming}, 39:\penalty0 117--129, 1987.

\bibitem[Nemirovski(1999)]{Nemirovski1999}
A.~Nemirovski.
\newblock Optimization {II}: Standard numerical methods for nonlinear
  continuous optimization.
\newblock Technion -- Israel Institute of Technology, 1999.
\newblock URL \url{http://www2.isye.gatech.edu/~nemirovs/Lect_OptII.pdf}.

\bibitem[Nemirovski and Yudin(1983)]{NemirovskiYu83}
A.~Nemirovski and D.~Yudin.
\newblock \emph{Problem Complexity and Method Efficiency in Optimization}.
\newblock Wiley, 1983.

\bibitem[Nesterov(1983)]{nesterov1983method}
Y.~Nesterov.
\newblock A method of solving a convex programming problem with convergence
  rate ${O}(1/k^2)$.
\newblock \emph{Soviet Mathematics Doklady}, 27\penalty0 (2):\penalty0
  372--376, 1983.

\bibitem[Nesterov(2000)]{Nesterov00}
Y.~Nesterov.
\newblock Squared functional systems and optimization problems.
\newblock In \emph{High Performance Optimization}, volume~33 of \emph{Applied
  Optimization}, pages 405--440. Springer, 2000.

\bibitem[Nesterov(2004)]{Nesterov04}
Y.~Nesterov.
\newblock \emph{Introductory Lectures on Convex Optimization}.
\newblock Kluwer Academic Publishers, 2004.

\bibitem[Nesterov(2012)]{NesterovGradSmall2012}
Y.~Nesterov.
\newblock How to make the gradients small.
\newblock \emph{Optima 88}, 2012.

\bibitem[Nesterov and Polyak(2006)]{nesterov2006cubic}
Y.~Nesterov and B.~T. Polyak.
\newblock Cubic regularization of {N}ewton method and its global performance.
\newblock \emph{Mathematical Programming}, 108\penalty0 (1):\penalty0 177--205,
  2006.

\bibitem[Nocedal and Wright(2006)]{NocedalWr06}
J.~Nocedal and S.~J. Wright.
\newblock \emph{Numerical Optimization}.
\newblock Springer, 2006.

\bibitem[O'Donoghue and Cand{\`e}s(2015)]{o2015adaptive}
B.~O'Donoghue and E.~Cand{\`e}s.
\newblock Adaptive restart for accelerated gradient schemes.
\newblock \emph{Foundations of Computational Mathematics}, 15\penalty0
  (3):\penalty0 715--732, 2015.

\bibitem[Polak and Ribi{\`e}re(1969)]{polaknote}
E.~Polak and G.~Ribi{\`e}re.
\newblock Note sur la convergence de directions conjug{\'e}es.
\newblock \emph{Rev. Fr. Inform. Rech. Oper. v16}, pages 35--43, 1969.

\bibitem[Rumelhart et~al.(1986)Rumelhart, Hinton, and
  Williams]{RumelhartHiWi86}
D.~E. Rumelhart, G.~E. Hinton, and R.~J. Williams.
\newblock Learning internal representations by error propagation.
\newblock In D.~E. Rumelhart and J.~L. McClelland, editors, \emph{Parallel
  Distributed Processing -- Explorations in the Microstructure of Cognition},
  chapter~8, pages 318--362. MIT Press, 1986.

\bibitem[Wang et~al.(2016)Wang, Giannakis, and Eldar]{WangGiEl16}
G.~Wang, G.~B. Giannakis, and Y.~C. Eldar.
\newblock Solving systems of random quadratic equations via truncated amplitude
  flow.
\newblock \emph{arXiv:1605.08285 [stat.ML]}, 2016.

\end{thebibliography}

\newpage
\appendix
\part*{Supplementary material}
\section{Proofs from Section~\ref{sec:alg-components}}
\subsection{Proof of Proposition~\ref{prop:agd-convexity}} 
\label{app:agd-convexity}

\propAgdConvexity*

\begin{proof}
	The proof is closely based on the proof of Theorem 3.18 of 
	\cite{bubeck2014convex}, which itself is based on the estimate sequence 
	technique of \citet{Nesterov04}. We modify the proof slightly to avoid 
	arguments that depend on the global minimum of $f$. 
	This enables using inequalities \eqref{eq:agd-convexity-req} to prove the 
	result, instead of $\StrConv$-strong convexity of the function $f$.
	
	We define $\localstrconv$-strongly convex quadratic functions $\Phi_{s}$ 
	by induction as
	\begin{equation*}
	\Phi_{0}(z) = f(x_{0}) + \frac{\localstrconv}{2} \| z - x_{0} \|^2,
	\end{equation*}
	and, for $s = 0, ..., t-1$,
	\begin{flalign}\label{eq:phi-def-induct}
	\Phi_{s+1}(z) = \left( 1 - \frac{1}{\sqrt{\kappa}} \right) \Phi_{s}(z) + 
	\frac{1}{\sqrt{\kappa}} \left( f(x_{s}) + \nabla f(x_{s})^T (z - x_{s}) + 
	\frac{\localstrconv}{2} \| z - x_{s} \|^2 \right).
	\end{flalign}
	Using~\eqref{eq:agd-convexity-req} with $u=w$, straightforward induction 
	shows that
	\begin{flalign}\label{eq:Phi:lb}
	\Phi_{s}(w) \le f(w) + \left( 1 - \frac{1}{\sqrt{\kappa}} \right)^{s} 
	\psi(w)~~\mbox{for}~~s=0,1,...,t.
	\end{flalign}
	
	Let  $\Phi_{s}^{*} = \min_{x \in \R^{n}}{\Phi_{s}(x)}$. If
	\begin{flalign}\label{eq:Phi:hypothesis}
	f(y_{s}) \le \Phi_{s}^{*}~~\mbox{for}~~s=0,1,...,t
	\end{flalign}
	then \eqref{eq:agd-expected-progress} follows immediately, since
	\begin{flalign*}
	f(y_{t}) - f(w) \le \Phi_{t}^{*} - f(w) \le 
	 \Phi_{t}(w) - f(w) \le \left( 1 - \frac{1}{\sqrt{\kappa}} \right)^{t}  \psi(w)
	\end{flalign*}

We now prove \eqref{eq:Phi:hypothesis} by induction. Note that it is true at $s 
= 0$ since $x_0 = y_0$ is the global minimizer of $\Phi_0$. We have,
\begin{flalign*}
f(y_{s+1}) &\overset{\text{(a)}}{\le}  f(x_{s}) - \frac{1}{2 \localsmgrad} \| \nabla 
f 
(x_{s}) \|^2 \\
&= \left( 1 - \frac{1}{\sqrt{\kappa}} \right) f(y_{s}) + \left(1 - 
\frac{1}{\sqrt{\kappa}} \right) \left( f(x_{s}) - f(y_{s}) \right) + 
\frac{1}{\sqrt{\kappa}} f(x_{s}) - \frac{1}{2 \localsmgrad} \| \nabla f(x_{s}) \|^2 
\\
&\overset{\text{(b)}}{\le}   \left( 1 - \frac{1}{\sqrt{\kappa}} \right) \Phi_{s}^{*} + 
\left( 1 - 
\frac{1}{\sqrt{\kappa}} \right) \left( f(x_{s}) - f(y_{s}) \right) + 
\frac{1}{\sqrt{\kappa}} f(x_{s}) - \frac{1}{2 \localsmgrad} \| \nabla f(x_{s}) \|^2
\\
&\overset{\text{(c)}}{\le}   \left( 1 - \frac{1}{\sqrt{\kappa}} \right) \Phi_{s}^{*} + 
\left( 1 - 
\frac{1}{\sqrt{\kappa}} \right) \nabla f(x_{s})^T (x_{s} - y_{s} ) + 
\frac{1}{\sqrt{\kappa}} f(x_{s}) - \frac{1}{2 \localsmgrad} \| \nabla f(x_{s}) \|^2,
\end{flalign*}
where inequality (a) follows from the definition $y_{s+1} = x_s - 
\frac{1}{\localsmgrad}\grad f(x_s)$ and the $\localsmgrad$-smoothness of 
$f$, inequality (b) is the induction hypothesis and inequality (c) is 
assumption~\eqref{eq:agd-convexity-req} with $u=y_s$.

Past this point the proof is identical to the proof of Theorem 3.18 of 
\cite{bubeck2014convex}, but we continue for sake of completeness.

To complete the induction argument we now need to show that:
\begin{flalign}\label{eq:need-to-show-Phi-s}
\left( 1 - \frac{1}{\sqrt{\kappa}} \right) \Phi_{s}^{*} + \left( 1 - 
\frac{1}{\sqrt{\kappa}} \right) \nabla f(x_{s})^T (x_{s} - y_{s} ) + 
\frac{1}{\sqrt{\kappa}} f(x_{s}) - \frac{1}{2 \localsmgrad} \| \nabla f(x_{s}) \|^2 
\le \Phi_{s+1}^{*}.
\end{flalign}
Note that $\nabla^2 \Phi_{s}= \localstrconv I_{n}$ (immediate 
by induction) and therefore
\begin{equation*}
\Phi_{s} ( x ) = \Phi_{s}^{*} + \frac{\localstrconv}{2} \| x - v_{s} \|^2 ,
\end{equation*}
for some $v_{s} \in \R^{n}$. By differentiating 
\eqref{eq:phi-def-induct} and using the above form of $\Phi_{s}$ we obtain
\begin{equation*}
\nabla \Phi_{s+1} ( x ) = \localstrconv \left( 1 - \frac{1}{\sqrt{\kappa}} \right) (x 
- v_{s} ) + \frac{1}{\sqrt{\kappa}} \nabla f(x_{s}) + 
\frac{\localstrconv}{\sqrt{\kappa}} ( x - x_{s} ).
\end{equation*}
Since by definition $\Phi_{s+1}(v_{s+1})=0$, we have
\begin{flalign}\label{vs-def-induct}
v_{s+1} = \left( 1 - \frac{1}{\sqrt{\kappa}} \right) v_{s} + 
\frac{1}{\sqrt{\kappa}} x_{s} - \frac{1}{\localstrconv \sqrt{\kappa}} \nabla 
f(x_{s})
\end{flalign}
Using \eqref{eq:phi-def-induct}, evaluating evaluating $\Phi_{s+1}$ at $x_{s}$ 
gives,
\begin{flalign}\label{eq:Phi-s-at-xs}
\Phi_{s+1}(x_s) = \Phi_{s+1}^{*}  + \frac{\localstrconv}{2} \| x_{s} - v_{s+1} 
\|^2 = \left( 1 - 
\frac{1}{\sqrt{\kappa}} \right)\left[ \Phi_{s}^{*} + \frac{\localstrconv}{2} \| x_{s} 
- v_{s} \|^2\right] + \frac{1}{\sqrt{\kappa}} 
f(x_{s}).
\end{flalign}
Substituting  \eqref{vs-def-induct} gives
\begin{equation*}
\| x_{s} - v_{s+1} \|^2 = \left( 1 - \frac{1}{\sqrt{\kappa}} \right)^2 \| x_{s} - 
v_{s} 
\|^2 + \frac{1}{\localstrconv^2 \kappa} \| \nabla f(x_{s}) \|^2 - 
\frac{2}{\localstrconv \sqrt{\kappa}} \left( 1 - \frac{1}{\sqrt{\kappa}} \right) 
\nabla f(x_{s})^T (v_{s} - x_{s})
\end{equation*}
which combined with \eqref{eq:Phi-s-at-xs} yields
\begin{flalign*}
\Phi_{s+1}^{*} = \left( 1 - \frac{1}{\sqrt{\kappa}} \right) \Phi_{s}^{*}  
 + \frac{1}{\sqrt{\kappa}}\left( 1 - 
\frac{1}{\sqrt{\kappa}} \right) \nabla f(x_{s})^T (v_{s}  - x_{s} )  + 
\frac{1}{\sqrt{\kappa}} f(x_{s}) - \frac{1}{2 \localsmgrad} \| \nabla f(x_{s}) \|^2 
\\ +
\frac{\localstrconv}{2 \sqrt{\kappa}} \left( 1 - \frac{1}{\sqrt{\kappa}} \right) \| 
x_{s} - v_{s} \|^2 .
\end{flalign*}
Examining this equation, it is seen that $v_{s} - x_{s} = \sqrt{\kappa} (x_{s} - 
y_{s})$ implies~ \eqref{eq:need-to-show-Phi-s} and therefore concludes the 
proof of Proposition~\ref{prop:agd-convexity}. We establish the relation $v_{s} 
- x_{s} = \sqrt{\kappa} (x_{s} - y_{s})$ by induction,
\begin{flalign*}
v_{s+1} - x_{s+1} &= \left( 1 - \frac{1}{\sqrt{\kappa}} \right) v_{s} + 
\frac{1}{\sqrt{\kappa}} x_{s} - \frac{1}{\localstrconv \sqrt{\kappa}} \nabla 
f(x_{s}) - x_{s+1} \\
&= \sqrt{\kappa} x_{s} - ( \sqrt{\kappa} - 1 ) y_{s} - 
\frac{\sqrt{\kappa}}{\localsmgrad} \nabla f(x_{s}) - x_{s+1} \\
&= \sqrt{\kappa} y_{s+1} - ( \sqrt{\kappa} - 1) y_{s} - x_{s+1} 
= \sqrt{\kappa} (x_{s+1} - y_{s+1} ).
\end{flalign*}
where the first equality comes from \eqref{vs-def-induct}, the second from the induction hypothesis, the third from the definition of $y_{s+1}$ and the last one from the definition of $x_{s+1}$.
\end{proof}


\section{Proofs from Section~\ref{sec:third-order}}

\subsection{Proof of Lemma~\ref{lem:exploit-pair-cubic}}
\label{app:proof-exploit-pair-cubic}

We begin by proving the following normalized version of 
Lemma~\ref{lem:exploit-pair-cubic}.

\begin{restatable}{lemma}{lemCubicApproxNc} \label{lem:cubic-approx-nc}
	Let $h:\R \to \R$ be thrice differentiable, $h'''$ be $L$-Lipschitz 
	continuous for some $L>0$ and let
	\begin{equation}\label{eq:cubic-approx-nc-conds}
	h(1)-h(-1)-2h'(-1)=\int_{-1}^{1} 
	d\nu{\int_{-1}^{\nu}{h''(\xi) d\xi}} \le -A,
	\end{equation}
	for some $A \ge 0$. Then for any $\rho \ge 4$
	\begin{equation}\label{eq:cubic-approx-nc-guarantee}
	\min\{h(-1-\rho), h(1+\rho')\} \le \max\left\{h(-1) - \frac{A}{4}\rho^2,h(1) - 
	\frac{A}{6} \rho^2\right\}  + \frac{L}{8}\rho^4,
	\end{equation}
	where $\rho' = \sqrt{\rho(\rho+2)}-2$.
\end{restatable}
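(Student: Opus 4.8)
The plan is to reduce the two-point, different-step-size statement of Lemma~\ref{lem:exploit-pair-cubic} to the normalized one-dimensional Lemma~\ref{lem:cubic-approx-nc}, and then prove the latter directly from the fourth-order Taylor expansion. For the reduction, I would restrict $f$ to the line through $u$ and $v$: set $s = \norm{u-v}$, $\ncdirection = (u-v)/s$, and define $g(\theta) = f(v + \tfrac{s(\theta+1)}{2}\ncdirection)$ so that $g(-1) = f(v)$ and $g(1) = f(u)$. Then rescale so that the second-difference condition \eqref{eq:exploit-pair-cond} becomes exactly \eqref{eq:cubic-approx-nc-conds}: condition \eqref{eq:exploit-pair-cond} says $f(u) - f(v) - \grad f(v)^T(u-v) < -\tfrac{\alpha}{2}s^2$, which in terms of $g$ reads $g(1) - g(-1) - 2g'(-1) < -\tfrac{\alpha}{2}s^2 =: -A$. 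The Lipschitz constant of $g'''$ is $L = \SmCubic (s/2)^3$. With $\eta \le \sqrt{2\alpha/\SmCubic}$ and $\norm{u-v} = s \le \eta/2$, one sets $\rho$ so that $u_+$ and $v_-$ in \callENCPT{} correspond to $g(1+\rho')$ and $g(-1-\rho)$; indeed the choice $\eta' = \sqrt{\eta(\eta+s)} - s$ on line~\ref{line:eta-prime} is precisely engineered so that, after the affine change of variables, $\rho' = \sqrt{\rho(\rho+2)} - 2$. I would then check $\rho \ge 4$ follows from $s \le \eta/2$, and that substituting $A$, $L$, $\rho$ back into \eqref{eq:cubic-approx-nc-guarantee} and simplifying yields \eqref{eq:exploit-pair-progress-cubic}; this is the bookkeeping-heavy but routine part.

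For the core Lemma~\ref{lem:cubic-approx-nc}, the approach mirrors the proof of Lemma~\ref{lem:exploit-point-cubic}. Expand $h$ around $-1$ to third order with fourth-order remainder controlled by $L$: for the point $-1-\rho$ we get $h(-1-\rho) \le h(-1) - h'(-1)\rho + \tfrac{h''(-1)}{2}\rho^2 - \tfrac{h'''(-1)}{6}\rho^3 + \tfrac{L}{24}\rho^4$; similarly expand around $+1$ for the point $1+\rho'$. The idea is that the odd-order terms ($\rho$ and $\rho^3$ terms) in the two expansions can be played off against each other — just as in Lemma~\ref{lem:exploit-point-cubic} where $\bar\eta = -\mathrm{sign}(A_\eta)\eta$ kills the first-order term. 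Here the asymmetry (different step sizes $\rho$ and $\rho'$, different base points) means one wants a combination/min of the two bounds in which the first and third order contributions are non-positive. The key algebraic fact driving the choice $\rho' = \sqrt{\rho(\rho+2)} - 2$ is that it makes certain coefficients match: I expect that $(1+\rho')^2 = \rho(\rho+2) = (1+\rho)^2 - 1$, so the "spread" from $+1$ to $1+\rho'$ equals the spread from $-1$ to $-1-\rho$ minus a controlled amount, which is what lets the second-order terms combine correctly after substituting the curvature bound from \eqref{eq:cubic-approx-nc-conds}.

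Concretely, I would introduce $t_- = -(1+\rho)$ (distance $1+\rho$ from $0$) and $t_+ = 1+\rho'$ (distance $1+\rho'$ from $0$), expand $h$ around $0$ to third order at both $t_-$ and $t_+$ with $\tfrac{L}{24}|t_\pm|^4$ remainders, and note $|t_-|^2 = (1+\rho)^2$ while $|t_+|^2 = (1+\rho)^2 - 1$. Writing $P(t) = h(0) + h'(0)t + \tfrac{h''(0)}{2}t^2 + \tfrac{h'''(0)}{6}t^3$, the odd part $h'(0)t + \tfrac{h'''(0)}{6}t^3$ has opposite signs at $t_-$ versus at a suitable reflection, so taking the minimum over the sign of the step (which \callENCPT{} does by returning $\argmin\{f(v_-), f(u_+)\}$) removes it; one is left with $\min\{h(t_-), h(t_+)\} \le h(0) + \tfrac{h''(0)}{2}\big((1+\rho)^2 \text{ or } (1+\rho)^2 - 1\big) + \tfrac{L}{24}(1+\rho)^4$. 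Then I relate $h(0)$, $h''(0)$ back to the endpoint data $h(\pm1)$ using short Taylor expansions from $\pm 1$ to $0$, invoke \eqref{eq:cubic-approx-nc-conds} to get $h''(0) \le -A/2 + (\text{$L$-error})$ roughly (the average curvature being $\le -A/2$), and collect terms. The main obstacle will be managing the interlocking error terms: every time I move between $0$, $\pm1$, and $-1-\rho$ / $1+\rho'$ I accrue an $O(L\rho^4)$ error, and I need the constants to close up to exactly $\tfrac{L}{8}\rho^4$ on the right of \eqref{eq:cubic-approx-nc-guarantee} and to the clean coefficients $\tfrac14$ and $\tfrac16$ in front of $A\rho^2$; getting these constants right (rather than merely $O(\cdot)$) while using only $\rho \ge 4$ is the delicate part, and is presumably why the paper isolates this as a separate normalized lemma with its own proof in the appendix.
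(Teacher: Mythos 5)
Your overall framing (restrict to a line, normalize, Taylor-expand around $0$ with fourth-order error $L|\xi|^4/24$, exploit the special choice of $\rho'$, finish with a case analysis between the two candidate steps) matches the paper's strategy, but the central cancellation you rely on does not work, so there is a genuine gap. You claim that the odd part $h'(0)t + \tfrac16 h'''(0)t^3$ of the cubic model around $0$ has opposite signs at $t_-=-(1+\rho)$ and $t_+=1+\rho'$, so that taking the minimum of the two steps removes it. That is only true in the symmetric case $\rho'=\rho$; here $1+\rho'<1+\rho$, and the two odd contributions are $-g(1+\rho)$ and $g(1+\rho')$ with $g(s)=h'(0)s+\tfrac16 h'''(0)s^3$, which can both be strictly positive: take $h'(0)=1$ and $h'''(0)=-6c$ with $\tfrac{1}{(1+\rho)^2}<c<\tfrac{1}{(1+\rho')^2}$, so that $g(1+\rho')>0>g(1+\rho)$. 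Hence the min over the two steps does not eliminate the first- and third-order terms, and your intermediate bound $\min\{h(t_-),h(t_+)\}\le h(0)+\tfrac12 h''(0)\big((1+\rho)^2\ \text{or}\ (1+\rho)^2-1\big)+\tfrac{L}{24}(1+\rho)^4$ does not follow. Relatedly, hypothesis \eqref{eq:cubic-approx-nc-conds} does not give ``$h''(0)\le -A/2$ up to $L$-error'': integrating the cubic model over the triangle gives $2h''(0)-\tfrac23 h'''(0)\le -A+\tfrac{L}{3}$, i.e.\ it controls only the combination $\tfrac12 h''(0)-\tfrac16 h'''(0)$, and $h'''(0)$ itself is not bounded by the hypotheses, so it cannot be discarded.

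The paper's proof addresses exactly this obstruction. Writing $\tilde h$ for the cubic model at $0$, it re-expresses $\tilde h(-1-\rho)$ and $\tilde h(1+\rho')$ in terms of the endpoint values $\tilde h(\pm1)$, the controlled combination $\tfrac12 h''(0)-\tfrac16 h'''(0)$, and the uncontrolled quantity $S:=\tfrac{\tilde h(1)-\tilde h(-1)}{2}+\tfrac16 h'''(0)\rho(2+\rho)$. The negative-direction step contributes $-\rho S$; thanks to the identity $(2+\rho')^2=\rho(2+\rho)$ (note your stated identity $(1+\rho')^2=\rho(\rho+2)$ is off — it is $2+\rho'=\sqrt{\rho(\rho+2)}$), the positive-direction step contributes $+\rho' S$, so a dichotomy on the sign of $S$ shows that at least one of the two steps incurs no loss from the uncontrolled first- and third-order data. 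What remains are the second-order terms with coefficients $\rho(2+\rho)$ and $\rho'(2+\rho')$, which are squeezed between $\tfrac23\rho^2$ and $\tfrac32\rho^2$ using $\rho\ge4$; this is where the constants $\tfrac{A}{4}$, $\tfrac{A}{6}$ and $\tfrac{L}{8}\rho^4$ come from. Without this (or an equivalent) sign dichotomy on $S$, your outline cannot be closed.
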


\begin{proof}
	Define
	\begin{equation*}
	\tilde{h}(\xi) = h(0) + h'(0)\xi + \frac{1}{2}h''(0)\xi^2 + \frac{1}{6}h'''(0)\xi^3.
	\end{equation*}
	By the Lipschitz continuity of $h'''$, we have that 
	$|h(\xi)-\tilde{h}(\xi)| \le L\xi^4/24$  for any $\xi \in \R$ (see Section~\ref{sec:prelims}). Similarly, viewing $h'''$ as the first derivative of $h''$, we have and $|h''(\xi) - \tilde{h}''(\xi)| \le 
	L\xi^2/2$. The 
	assumption~\eqref{eq:cubic-approx-nc-conds} therefore implies,
	\begin{equation}\label{eq:htilde-nc}
	4\left[\frac{1}{2}h''(0)-\frac{1}{6}h'''(0)\right] = \int_{-1}^{1} 
	d\nu{\int_{-1}^{\nu}{\tilde{h}''(\xi) d\xi}} \le -A + \frac{L}{2}\int_{-1}^{1} 
	d\nu{\int_{-1}^{\nu}{\xi^2 d\xi}} = -A + \frac{1}{3}L.
	\end{equation}
	It is also easy to verify that
	\begin{equation*}
	h(0) = \frac{\tilde{h}(1)+\tilde{h}(-1)}{2} - \frac{1}{2}h''(0)
	~~\mbox{and}~~
	h'(0) = \frac{\tilde{h}(1)-\tilde{h}(-1)}{2} - \frac{1}{6}h'''(0).	
	\end{equation*}
	Substituting into the definition of $\tilde{h}$ and rearranging, this yields
	\begin{equation}\label{eq:htilde-plus-rho}
	\tilde{h}(1+\rho') = \tilde{h}(1) + \frac{\tilde{h}(1)-\tilde{h}(-1)}{2}\rho' 
	+ \left[ \frac{1}{2}h''(0) - \frac{1}{6}h'''(0)\right]\rho'(2+\rho') 
	+ \frac{1}{6}h'''(0)\rho' (2+\rho')^2
	\end{equation}
	and
	\begin{equation}\label{eq:htilde-minus-rho}
	\tilde{h}(-1-\rho) = \tilde{h}(-1) - \frac{\tilde{h}(1)-\tilde{h}(-1)}{2}\rho 
	+ \left[ \frac{1}{2}h''(0) - \frac{1}{6}h'''(0)\right]\rho(2+\rho) 
	- \frac{1}{6}h'''(0)\rho^2 (2+\rho).
	\end{equation}
	
	Suppose that $\frac{\tilde{h}(1)-\tilde{h}(-1)}{2} + 
	\frac{1}{6}h'''(0)\rho(2+\rho) \ge 0$. By 
	\eqref{eq:htilde-minus-rho},~\eqref{eq:htilde-nc} and $\rho^2 \le 
	\rho(2+\rho) \le 3\rho^2/2$ (since $\rho \ge 4$) we
	then have
	\begin{equation*}
	\tilde{h}(-1-\rho) \le \tilde{h}(-1) - \frac{A}{4}\rho^2 + \frac{L}{8}\rho^2,
	\end{equation*}
	and, using $|h(\xi)-\tilde{h}(\xi)| \le L\xi^4/24$ for $\xi = -1$ and $\xi = 
	-1-\rho$ along with $1 \le \rho/4$, we get
	\begin{equation}\label{eq:cubic-nc-case-1}
	h(-1-\rho) \le h(-1) - \frac{A}{4}\rho^2 + 
	\frac{L}{8}\rho^2 + 
	\frac{L}{24}(1+(1+\rho)^4) \le h(-1)
	- \frac{A}{4}\rho^2  +\frac{L}{8}\rho^4.
	\end{equation}
	
	Suppose now that $\frac{\tilde{h}(1)-\tilde{h}(-1)}{2} + 
	\frac{1}{6}h'''(0)\rho(2+\rho) < 0$ holds instead.  By 
	\eqref{eq:htilde-plus-rho} and \eqref{eq:htilde-nc} we 
	then have
	\begin{flalign*}
	\tilde{h}(1+\rho') & \le \tilde{h}(1) - \left[ \frac{A}{4}-\frac{L}{12} 
	\right]\rho'(2+\rho') + \frac{1}{6}h'''(0)\rho'\left[ (2+\rho')^2 - \rho(2+\rho)] 
	\right]
	= \tilde{h}(1) - \left[ \frac{A}{4}-\frac{L}{12}\right]\rho'(2+\rho')
	\end{flalign*}
	where the equality follows from the definition $(2+\rho')^2 = 
	\rho(2+\rho)$. We lower bound $\rho'(2+\rho')$ as
	\begin{equation*}
	\rho'(2+\rho') = \rho(2+\rho) - 2\sqrt{\rho(2+\rho)} \ge 
	\rho\left(\frac{\rho}{2}+\rho\right) - 
	\frac{\rho}{2}\sqrt{\rho\left(\frac{\rho}{2}+\rho\right)} \ge \frac{2\rho^2}{3},
	\end{equation*} %
	where the first inequality follows from the fact that $\rho(\zeta+\rho) - 
	\zeta \sqrt{\rho(\zeta+\rho)}$ is monotonically 
	decreasing in $\zeta \ge 0$ and the assumption $2 \le \rho/2$. Noting that 
	$\rho' \le \rho$, we have the upper bound $\rho'(2+\rho') \le \rho(2+\rho) 
	\le 3\rho^2/2$. Combining these bounds gives $\tilde{h}(1+\rho') \le 
	\tilde{h}(1) - 
	\frac{A}{6}\rho^2 + \frac{L}{8}\rho^2$. Applying $|h(\xi)-\tilde{h}(\xi)| \le 
	L\xi^4/24$ at $\xi = 1$ and $\xi = 
	1+\rho'$, and using $\rho' \le \rho$  and $1\le\rho/4$ once more, we 
	obtain,
	\begin{equation}\label{eq:cubic-nc-case-2}
	h(1+\rho') \le h(1) - \frac{A}{6}\rho^2 + \frac{L}{8}\rho^2 + 
	\frac{L}{24}(1+ (1 + \rho)^4) \le 
	h(1) - \frac{A}{6}\rho^2  +\frac{L}{8}\rho^4.
	\end{equation}
	The fact that either~\eqref{eq:cubic-nc-case-1} 
	or~\eqref{eq:cubic-nc-case-2} must hold implies 
	\eqref{eq:cubic-approx-nc-guarantee}.

\end{proof}

With the auxiliary Lemma~\ref{lem:cubic-approx-nc}, we prove 
Lemma~\ref{lem:exploit-pair-cubic}.

\lemExploitPairCubic* 

\begin{proof}
	Define
	\begin{equation*}
	h(\theta) \defeq f\left( \frac{1+\theta}{2}u + \frac{1-\theta}{2}v\right).
	\end{equation*}
	We have
	\begin{equation*}
	h(1)-h(-1)-2h'(-1) = f(u) - f(v) - \grad f(v)^T(u-v) < 
	-\frac{\alpha}{2}\norm{u-v}^2\defeq -A.
	\end{equation*}
	Additionally, since $f$ has $\SmCubic$-Lipschitz third order derivatives, 
	$h'''$ is $\frac{1}{16}\SmCubic\norm{u-v}^4 \defeq L$ Lipschitz 
	continuous, so we 
	may apply Lemma~\ref{lem:cubic-approx-nc} at $\rho = 2\eta/\norm{u-v} 
	\ge 4$. Letting $\ncdirection = (u-v)/\norm{u-v}$, we note that $h(1-\rho) = 
	f(v- \eta \ncdirection)$. Similarly, for $2 + \rho' = \sqrt{\rho(2+\rho)}$ we 
	have $h(1+\rho') = f(u + \eta' \ncdirection)$ with $\eta'$ given in 
	line~\ref{line:eta-prime} of \callENCPT{}. The result is now immediate 
	from~\eqref{eq:cubic-approx-nc-guarantee}, as
	\begin{flalign*}
	f(z) &= \min\{f(v-\eta\ncdirection),f(u+\eta'\ncdirection)\} = 
	\min\{h(-1-\rho),h(1+\rho')\} \le \max\left\{h(-1)-\frac{A}{4}\rho^2,h(1) 
	-\frac{A}{6}\rho^2\right\}  + 	\frac{L}{8}\rho^4
	\\&
	= \max\left\{f(v)-\frac{\alpha}{2}\eta^2,f(u)-\frac{\alpha}{3}\eta^2\right\} + 
	\frac{\SmCubic}{8}\eta^4 \le 
	\max\left\{f(v)-\frac{\alpha}{4}\eta^2,f(u)-\frac{\alpha}{12}\eta^2\right\},
	\end{flalign*}
	where in the last transition we have used $\eta \le 
	\sqrt{\frac{2\alpha}{\SmCubic}}$.
\end{proof}

\subsection{Proof of 
Lemma~\ref{lem:progress-value-cubic}}\label{app:proof-progress-value-cubic}

We first state and prove a normalized version of the central argument in the 
proof of Lemma~\ref{lem:progress-value-cubic}

\begin{lemma} \label{lem:cubic-approx}
	Let $h:\R \to \R$ be thrice differentiable and let $h'''$ be $L$-Lipschitz 
	continuous for some $L>0$. If
	\begin{equation}\label{eq:cubic-approx-reqs}
	h(0) \le A, h(-1/2) \ge -B, h(-1) \le C~\mbox{and}~h(-3) \ge -D
	\end{equation}
	for some $A,B,C,D \ge 0$, then
	\begin{equation*}
	h(\theta) \le h(0) + 7A + 12.8B + 6C + 0.2D + L
	\end{equation*}
	for any $\theta \in [0,1]$.
\end{lemma}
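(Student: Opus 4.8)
The plan is to let $p$ be the unique cubic polynomial interpolating $h$ at the four nodes $\{0,-\tfrac12,-1,-3\}$, to bound $p(\theta)$ on $[0,1]$ using the one-sided hypotheses~\eqref{eq:cubic-approx-reqs}, and to control the interpolation error $h-p$ via the Lipschitz continuity of $h'''$. Writing $p$ in Lagrange form, $p(\theta)=h(0)\ell_0(\theta)+h(-\tfrac12)\ell_1(\theta)+h(-1)\ell_2(\theta)+h(-3)\ell_3(\theta)$, I would first compute the basis polynomials explicitly --- $\ell_0(\theta)=\tfrac23(\theta+\tfrac12)(\theta+1)(\theta+3)$, $\ell_1(\theta)=-\tfrac85\,\theta(\theta+1)(\theta+3)$, $\ell_2(\theta)=\theta(\theta+\tfrac12)(\theta+3)$, $\ell_3(\theta)=-\tfrac1{15}\,\theta(\theta+\tfrac12)(\theta+1)$ --- and observe that on $[0,1]$ each is monotone, with $\ell_0\in[1,8]$ (hence $\ell_0-1\in[0,7]$) and $\ell_2\in[0,6]$ nonnegative, while $\ell_1\in[-12.8,0]$ and $\ell_3\in[-0.2,0]$ are nonpositive. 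The constants $7,6,12.8,0.2$ appearing in the statement are exactly $\ell_0(1)-1$, $\ell_2(1)$, $-\ell_1(1)$ and $-\ell_3(1)$ (a partition-of-unity check $8-12.8+6-0.2=1$ confirms the computation).

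Next I would match these sign patterns against the hypotheses. Splitting $h(0)\ell_0(\theta)=h(0)+h(0)\bigl(\ell_0(\theta)-1\bigr)$ and using $\ell_0(\theta)-1\in[0,7]$ with $h(0)\le A$ gives $h(0)\ell_0(\theta)\le h(0)+7A$ regardless of the sign of $h(0)$ --- this is precisely the point of the ``$h(0)+$'' term in the bound. Since $\ell_2(\theta)\ge 0$ and $h(-1)\le C$, we get $h(-1)\ell_2(\theta)\le 6C$. Since $\ell_1(\theta)\le 0$ and $h(-\tfrac12)\ge -B$, multiplying the latter inequality by $\ell_1(\theta)$ reverses it to $h(-\tfrac12)\ell_1(\theta)\le -B\,\ell_1(\theta)\le 12.8\,B$; likewise $h(-3)\ell_3(\theta)\le -D\,\ell_3(\theta)\le 0.2\,D$. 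Summing the four terms, $p(\theta)\le h(0)+7A+12.8B+6C+0.2D$ for every $\theta\in[0,1]$.

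Finally I would bound $g:=h-p$, which vanishes at the four nodes. Fix $\theta_*\in(0,1]$ (the case $\theta_*=0$ is trivial since $g(0)=0$), set $\phi(\theta)=\theta(\theta+\tfrac12)(\theta+1)(\theta+3)$, and consider $\Psi(s)=g(s)-\tfrac{g(\theta_*)}{\phi(\theta_*)}\phi(s)$, which vanishes at the five distinct points $0,-\tfrac12,-1,-3,\theta_*$. Three applications of Rolle's theorem produce $a<b$ with $\Psi'''(a)=\Psi'''(b)=0$; since $h'''$ is absolutely continuous (being Lipschitz) and $\Psi^{(4)}(s)=h^{(4)}(s)-24\,g(\theta_*)/\phi(\theta_*)$ with $|h^{(4)}|\le L$ almost everywhere, integrating $\Psi^{(4)}$ over $[a,b]$ forces $|24\,g(\theta_*)/\phi(\theta_*)|\le L$, i.e.\ $|g(\theta_*)|\le\tfrac{L}{24}|\phi(\theta_*)|\le\tfrac{L}{24}\cdot 12=\tfrac L2$. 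Combining with the previous paragraph, $h(\theta)\le p(\theta)+L/2\le h(0)+7A+12.8B+6C+0.2D+L$. The only step requiring genuine care is this last one --- making sure the classical cubic interpolation-error estimate survives the weakening from $C^4$ to merely Lipschitz $h'''$; the repeated-Rolle argument above handles it, and one could alternatively invoke the Hermite--Genocchi formula (integrand bounded by $L$ over a simplex of volume $1/24$). Everything else is the routine bookkeeping of evaluating four Lagrange polynomials on $[0,1]$ and reading off their signs.
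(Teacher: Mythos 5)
Your proof is correct, and the polynomial bookkeeping at its heart is in fact identical to the paper's: the coefficient polynomials the paper obtains by eliminating $h'(0),h''(0),h'''(0)$ from the Taylor expansion at $0$ are exactly your Lagrange basis polynomials $\ell_0,\ell_1,\ell_2,\ell_3$ (e.g.\ $\frac{3}{2}\theta+\frac{7}{2}\theta^2+\theta^3=\theta(\theta+\tfrac12)(\theta+3)$), and the sign analysis giving $7A+12.8B+6C+0.2D$ is the same. Where you genuinely diverge is in the error control. The paper works with the cubic Taylor polynomial $\tilde h$ of $h$ at $0$, writes $\tilde h(\theta)$ as the Lagrange combination of $\tilde h(-3),\tilde h(-1),\tilde h(-\tfrac12),h(0)$, and then pays a Taylor-remainder penalty $L\xi^4/24$ at each of the three nonzero nodes and at $\theta$ itself, which totals exactly $L$ after weighting by the basis-polynomial magnitudes ($\frac{L}{24}[1+0.2\cdot 81+6+12.8\cdot\tfrac{1}{16}]=L$). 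You instead interpolate $h$ exactly at the four nodes and pay a single interpolation error at $\theta$, bounded via the repeated-Rolle argument by $\frac{L}{24}|\phi(\theta)|\le \frac{L}{2}$; your care in pushing the classical error estimate through with only Lipschitz $h'''$ (via $\Psi'''(b)-\Psi'''(a)=0$, or equivalently integrating the a.e.\ fourth derivative) is exactly what is needed and is sound. The paper's route is more elementary (only the Taylor bound~\eqref{eq:taylor-bound} already used throughout), while yours needs the Rolle machinery but yields the slightly sharper constant $L/2$ in place of $L$; either suffices for the stated bound.
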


\begin{proof}
	Define
	\begin{equation*}
	\tilde{h}(\xi) = h(0) + h'(0)\xi + \frac{1}{2}h''(0)\xi^2 + \frac{1}{6}h'''(0)\xi^3.
	\end{equation*}
	By the Lipschitz continuity of $h'''$, we have that 
	$|h(\xi)-\tilde{h}(\xi)| \le L\xi^4/24$, for any $\xi \in \R$. Using 
	the expressions for $\tilde{h}(x)$ at $\xi=-3,-1,-1/2$ to eliminate $h'(0),  
	h''(0)$ and $h'''(0)$, we obtain:
	\begin{flalign*}
	\tilde{h}(\theta) = h(0) & -\tilde{h}(-3)\left[ \frac{1}{30}\theta 
	+\frac{1}{10}\theta^2 + 
	\frac{1}{15}\theta^3\right]
	+ \tilde{h}(-1)\left[ \frac{3}{2}\theta +\frac{7}{2}\theta^2 +\theta^3\right]
	\\&
	- \tilde{h}(-1/2)\left[ \frac{24}{5}\theta +\frac{32}{5}\theta^2 
	+\frac{8}{5}\theta^3\right]
	+ {h}(0)\left[ \frac{10}{3}\theta +3\theta^2 +\frac{2}{3}\theta^3\right].
	\end{flalign*}
	Applying~\eqref{eq:cubic-approx-reqs}, $\theta \in [0,1]$ and 
	$|h(\xi)-\tilde{h}(\xi)| \le L\xi^4/24$ gives the required bound:
	\begin{flalign*}
	h(\theta) &\le h(0) + 0.2D + 6C + 12.8B + 7A + \frac{L}{24}\left[ \theta^4 + 
	0.2\cdot(-3)^4 + 6\cdot (-1)^4 + 12.8\cdot(-1/2)^4 \right]
	\\ &
	\le h(0) + 7A + 12.8B + 6C + 0.2D + L
	\end{flalign*}
\end{proof}

We now prove Lemma~\ref{lem:progress-value-cubic} itself.

\lemProgressValueCubic*

\begin{proof}
	 Let $0\le j < t$ be such that $v=x_j$ 
	(such $j$ always exists by Corollary~\ref{coro:witness-testimony}). If $j=0$ 
	then $x_j = y_0$ and the result is trivial, so we assume $j\ge1$. Let
	\begin{equation*}
	h(\theta) = f(y_j + \theta (y_j - y_{j-1}))-f(y_0)~~\mbox{for}~~\theta\in\R
	\end{equation*}
	Note that 
\begin{flalign*}
	h(-3) &= f(q_j) - f(y_0) \ge f(b^{(1)}) -f(y_0) \ge -\alpha\tau^2,\\ 
	h(-1) &= f(y_{j-1}) - f(y_0) \le 0, \\ 
	h(-1/2) &= f(c_j) - f(y_0) \ge f(b^{(1)}) -f(y_0) \ge -\alpha\tau^2, \\
	h(0) &= f(y_j)- f(y_0) \le 0 ~~\mbox{and} \\
	h(\omega) &= f(x_j) - f(y_0),
\end{flalign*}
	 where $0 < \omega < 1$ is defined in 
	line~\ref{line:agupg-defs} of \callAGDUPG{}, and we have used the 
	guarantee  $\max\{ f(y_{j-1}), f(y_j) \} \le f(y_0)$ from Corollary~\ref{coro:witness-testimony}. 
	Moreover, by the Lipschitz 
	continuity of the third derivatives of $f$, $h'''$ is 
	$\SmCubic\norm{y_j-y_{j-1}}^4$-Lipschitz continuous. Therefore, we can 
	apply Lemma~\ref{lem:cubic-approx} with $A=C=0$ and 
	$B=D=\alpha\tau^2$ at $\theta=\omega$ and obtain
	\begin{equation*}
	f(v) - f(y_0) =  f(x_j) - f(y_0) \le f(y_j) - f(y_0) + 13\alpha\tau^2 + 
	\SmCubic\norm{y_j-y_{j-1}}^4 \le 13\alpha\tau^2 + 
	\SmCubic\norm{y_j-y_{j-1}}^4.
	\end{equation*}
	To complete the proof, we note that Lemma~\ref{lem:progress-distance} 
	guarantees $\norm{y_j-y_{j-1}} \le \norm{y_j-y_0}+\norm{y_{j-1}-y_0} \le 
	2\tau$ and therefore
	\begin{equation*}
	\SmCubic\norm{y_j-y_{j-1}}^4 \le 16\SmCubic\tau^4 \le \alpha\tau^2,
	\end{equation*}
	where we have used $\tau^2 \le \alpha/(16\SmCubic)$.
\end{proof}

\subsection{Proof of 
Lemma~\ref{lem:main-progress-third}}\label{app:proof-main-progress-third}

\lemMainProgressThirdOrder*

\begin{proof}
	Fix an iterate index $1\le k<K$; throughout the proof we let $y_0^t, x_0^t$ 
	and $w$ refer to outputs of \callAGDUPG{} in the $k$th iteration.  We 
	consider only the case $v,u \neq \NULL$, as the argument for 
	$v,u=\NULL$ 
	is unchanged from Lemma~\ref{lem:main-progress-2nd}.
	
	As argued in the proof of Lemma~\ref{lem:main-progress-2nd}, when $v, u
	\neq \NULL$, 
	condition~\eqref{eq:exploit-pair-cond} holds. We set 
	$\tau \defeq \sqrt{\frac{\alpha}{32\SmCubic}}$ and consider two 
	cases. First, if $f(b^{(1)}) \le f(y_0) - \alpha\tau^2 = f(p_{k-1}) - 
	\frac{\alpha^2}{32\SmCubic}$ then we are done, since $f(p_k) \le 
	f(b^{(1)})$. Second, if $f(b^{(1)}) \ge f(y_0) - \alpha\tau^2$, by 
	Lemma~\ref{lem:progress-distance} we have that
	\begin{equation*}
	\norm{v-u} \le4\tau \le 
	\sqrt{\frac{\alpha}{2\SmCubic}} = \frac{\eta}{2},
	\end{equation*}
	Therefore, we can use Lemma~\ref{lem:exploit-pair-cubic} (with 
	$\eta$ as defined above) to show that
	\begin{equation}\label{eq:fb2-cubic}
	f(b^{(2)}) \le  \max\left\{ f(v) - 
	\frac{\alpha^2}{2\SmCubic}, f(u) - \frac{\alpha^2}{6\SmCubic}\right\}.
	\end{equation}
	By Corollary~\ref{coro:witness-testimony}, $f(u) \le \fhat(u) \le \fhat(y_0) = 
	f(p_{k-1})$. Moreover, since $f(b^{(1)}) \ge f(y_0) - 
	\alpha\tau^2$ and $\tau = \sqrt{\frac{\alpha}{32\SmCubic}}$, we may 
	apply 
	Lemma~\ref{lem:progress-value-cubic} to obtain
	\begin{equation*}
	f(v) \le f(y_0) + 14\alpha\tau^2 \le f(p_{k-1}) + 
	\frac{7\alpha^2}{16\SmCubic}.
	\end{equation*}
	Combining this with~\eqref{eq:fb2-cubic}, we find that
	\begin{equation*}
	f(p_k) \le f(b^{(2)}) \le f(p_{k-1}) - \min\left\{ \frac{\alpha^2}{2\SmCubic} - 
	\frac{7\alpha^2}{16\SmCubic}, \frac{\alpha^2}{6\SmCubic}\right\} = 
	f(p_{k-1}) - \frac{\alpha^2}{16\SmCubic},
	\end{equation*}
	which concludes the case $v,u\neq \NULL$ under third-order smoothness.
\end{proof}
\subsection{Proof of Theorem~\ref{thm:main-result-third}}
\label{app:proof-main-result-third}

\thmMainThird*

\begin{proof}
	The proof proceeds exactly like the proof of 
	Theorem~\ref{thm:main-result-2nd}. As argued there, the number of 
	gradient evaluations is at most $2KT$, where $K$ is number of iterations of 
	\callMAIN{} and $T$ is the maximum amount of steps performed in any 
	call to \callAGDUPG{}. 
	
	We derive the upper bound on $K$ directly from 
	Lemma~\ref{lem:main-progress-third}, as by 
	telescoping~\eqref{eq:main-prog} we obtain
	\begin{flalign*}
	\DeltaF &\ge f(p_0) - f(p_{K-1}) = \sum_{k=1}^{K-1} \left(f(p_{k-1}) - 
	f(p_k)\right) \ge (K-1)\cdot \min\left\{ \frac{\epsilon^2}{5\alpha}, 
	\frac{\alpha^2}{32\SmCubic}\right\} \ge 
	(K-1)\frac{\epsilon^{4/3}}{10\SmCubic^{1/3}},
	\end{flalign*}
	where the last transition follows from 
	substituting~\eqref{eq:alpha-choice-third}, 
	our choice of $\alpha$. We therefore conclude that
	\begin{equation}\label{eq:K-bound-third}
	K \le 1 + 10\DeltaF\SmCubic^{1/3}\epsilon^{-4/3}.
	\end{equation}
	
	To bound $T$, we recall that $\psi(z) \le \DeltaF$ for every $z\in\R^\Dim$, 
	as argued in the proof Theorem~\ref{thm:main-result-2nd}. 
	Therefore, substituting $\localeps = \epsilon/10$, $\localsmgrad = 
	\SmGrad + 
	2\alpha$ 
	and $\localstrconv = \alpha = 2{\SmCubic^{1/3} \epsilon^{2/3}}$ into the 
	guarantee~\eqref{eq:agdupg-runtime} of 
	Corollary~\ref{coro:witness-testimony} we obtain,
	\begin{equation}\label{eq:T-bound-third}
	T  \le 	1 + 
	\sqrt{2+\frac{\SmGrad}{2\SmCubic^{1/3} \epsilon^{2/3}}}
	\log_+ \left(\frac{200(\SmGrad + 4\SmCubic^{1/3} 
	\epsilon^{2/3})\DeltaF}{\epsilon^2}\right),
	\end{equation}
	where $\log_+(\cdot)$ is shorthand for $ \max\{0,\log(\cdot)\}$.
	
	Finally, we use $\epsilon^{2/3} \le 
	\min\{\DeltaF^{1/2}\SmCubic^{1/6}, 
	\SmGrad/(8\SmCubic^{1/3})\}$ to 
	simplify 
	the bounds on $K$ and $T$. Using $1 \le 
	\DeltaF\SmCubic^{1/3}\epsilon^{-4/3}$ reduces~\eqref{eq:K-bound} to
	\begin{equation*}
	K \le 11\DeltaF\SmCubic^{1/3}\epsilon^{-4/3}.
	\end{equation*}
	Applying $1 \le \SmGrad/(8{\SmCubic^{1/3} \epsilon^{2/3}})$ 
	on~\eqref{eq:T-bound} gives
	\begin{flalign*}
	T  \le \sqrt{\frac{3}{4}} 
	\frac{\SmGrad^{1/2}}{\SmCubic^{1/6}\epsilon^{1/3}} 
	\log\frac{500\SmGrad\DeltaF}{\epsilon^2},
	\end{flalign*}
	where $\DeltaF\SmGrad\epsilon^{-2} \ge 8$ allows us to drop the 
	subscript from the log. Multiplying the product of the above bounds by 2 
	gives the result.
\end{proof}

\section{Adding a second-order guarantee}\label{sec:combine-grad-eig}

\newcommand{\callEIG}[1]{{\Call{Approx-Eig}{#1}}}
\newcommand{\Ot}{\wt{O}}

In this section, we sketch how to obtain simultaneous guarantees on the 
gradient and minimum eigenvalue of the Hessian. We use the $\Ot(\cdot)$ 
notation to hide logarithmic dependence on $\epsilon$, Lipschitz 
constants $\DeltaF, \SmGrad, \SmHess, \SmCubic$ and a high probability 
confidence parameter $\delta\in(0,1)$, as well as lower order polynomial 
terms in $\epsilon^{-1}$.

Using approximate 
eigenvector computation, we can efficiently generate a direction of negative 
curvature, 
unless the Hessian is almost positive semi-definite. More explicitly, 
there exist methods of the form \callEIG{$f$, $x$, $\SmGrad$, $\alpha$, 
$\delta$} that 
require 
$\Ot(\sqrt{\SmGrad/\alpha}\log d)$ Hessian-vector products to produce a unit 
vector $v$ such that whenever 
$\hess f(x) \succeq -\alpha I$, with probability at least $1-\delta$ we have  
$v^T \hess f(x) v \le -\alpha/2$, \emph{e.g.} the Lanczos method (see 
additional 
discussion in \citep[\S 2.2]{carmon2016accelerated}). 
Whenever a unit vector $v$ satisfying $v^T \hess f(x) v 
\le -\alpha/2$ is available, we can use it to make function progress. If $\hess 
f$ is $\SmHess$-Lipschitz continuous then by Lemma~\ref{lem:exploit-pair}  
$f(x\pm\frac{\alpha}{\SmHess}v) < f(x) - \frac{\alpha^3}{12\SmHess^2}$ 
where by 
$f(x\pm z)$ we mean $\min\{f(x+z), f(x-z)\}$. If instead $f$ has 
$\SmCubic$-Lipschitz continuous third-order derivatives then by
Lemma~\ref{lem:exploit-point-cubic}, 
$f(x\pm\sqrt{\frac{2\alpha}{\SmCubic}}v)< f(x) - 
	\frac{\alpha^2}{4\SmCubic}$.

We can 
combine \callEIG{} with Algorithm~\ref{alg:MAIN} that finds a point with a 
small 
gradient as follows:
\begin{subequations}\label{combined:grad-eig}
\begin{flalign}
\hat{z}_{k} &\gets \callMAIN{f, z_{k}, \SmGrad, \epsilon, \alpha, \eta} 
\label{eq:guarded-call}\\
v_{k} &\gets \callEIG{f, \hat{z}_{k}, \SmGrad, \alpha, \delta'} \\
z_{k+1} &\gets \argmin_{x \in \{ \hat{z}_{k} + \eta v_k, \hat{z}_{k} - \eta v_k 
\}} f(x) 
\label{eq:eig-step}
\end{flalign}
\end{subequations}
As discussed above, under third order smoothness , $\eta 
=\sqrt{{2\alpha}/{\SmCubic}}$ 
guarantees that the step~\eqref{eq:eig-step} makes at least 
$\alpha^2/(4\SmCubic)$ function progress whenever $v_k^T \hess f( 
\hat{z}_{k}) v_k \le   -\alpha/2$. Therefore the above iteration can run at most 
$\Ot(\DeltaF\SmCubic/\alpha^2)$ times before $v_k^T \hess f( \hat{z}_{k}) 
v_k  \ge -\alpha/2$ is satisfied. Whenever $v_k^T \hess f( \hat{z}_{k}) v_k \ge 
-\alpha/2$, with 
probability $1-\delta'\cdot k$ we have the Hessian guarantee $\hess f( 
\hat{z}_{k}) \succeq -\alpha I$. 
Moreover, $\norm{\grad f( 
\hat{z}_{k})} \le \epsilon$ always holds. Thus, by setting $\alpha = 
\SmCubic^{1/3}\epsilon^{2/3}$ we obtain the required second order 
stationarity guarantee upon termination of the 
iterations~\eqref{combined:grad-eig}.

It remains to bound the computational cost of the method, with $\alpha = 
\SmCubic^{1/3}\epsilon^{2/3}$. The total number 
of Hessian-vector products 
required by \callEIG{} is,
\begin{equation*}
\Ot\left( \DeltaF\SmCubic/\alpha^2 \cdot 
\sqrt{\frac{\SmGrad}{\alpha}}\log d \right) = 
\Ot\left(\DeltaF\SmGrad^{1/2}\SmCubic^{1/6}\epsilon^{-5/3}\log d\right).
\end{equation*}
Moreover, 
it 
is readily seen from the proof of Theorem~\ref{thm:main-result-third} that 
every evaluation of~\eqref{eq:guarded-call} requires at most 
\begin{equation}\label{eq:progress-runtime-gurantee}
\Ot( (f(x_k) - 
f(x_{k+1}))\SmGrad^{1/2}\SmCubic^{1/6}\epsilon^{-5/3} + 
\SmGrad^{1/2}\SmCubic^{-1/6}\epsilon^{-1/3})
\end{equation}
 gradient and function 
evaluations. 
By telescoping the first term and multiplying the second by 
$\Ot(\DeltaF\SmCubic/\alpha^2)$, we  guarantee  $\norm{\grad f(x)} \le 
\epsilon$ and $\hess f( x) 
\succeq -\SmCubic^{1/3}\epsilon^{2/3} I$ in at most 
$\Ot(\DeltaF\SmGrad^{1/2}\SmCubic^{1/6}\epsilon^{-5/3}\log d)$ function, 
gradient and Hessian-vector product evaluations.

The argument above is the same as the one used to prove Theorem 4.3 
of~\cite{carmon2016accelerated}, but our improved guarantees under third 
order smoothness allows us get a better $\epsilon$ dependence for the 
complexity and lower bound on the Hessian in that regime. If instead we use 
the second 
order smoothness setting, we recover exactly the guarantees of 
\cite{carmon2016accelerated,agarwal2016finding}, namely $\norm{\grad f(x)} 
\le 
\epsilon$ and $\hess f( x) 
\succeq -\SmHess^{1/2}\epsilon^{1/2} I$ in at most 
$\Ot(\DeltaF\SmGrad^{1/2}\SmHess^{1/4}\epsilon^{-7/4}\log d)$ function, 
gradient and Hessian-vector product evaluations.

Finally, we remark that the above analysis would still apply if 
in~\eqref{eq:guarded-call} we replace \callMAIN{} with any method with a run 
-time guarantee of the 
form~\eqref{eq:progress-runtime-gurantee}. The resulting method will  
guarantee whatever the original method does, and also $\hess f(x) \succeq 
-\alpha I$. In particular, if the first method guarantees a small gradient, the 
combined method guarantees convergence to second-order stationary points. 

\section{Experiment details}\label{app:experiments}

\subsection{Implementation details}\label{app:implementation}

\paragraph{Semi-adaptive gradient steps}

Both gradient descent and AGD are based on gradients steps of the form  
\begin{equation}\label{eq:gradient-step}
y_{t+1} = x_t - \frac{1}{\SmGrad}\grad f (x_t).
\end{equation}
In practice $\SmGrad$ is often unknown and non-uniform, and therefore 
needs to be estimated adaptively. A common approach is backtracking line 
search, which we use for conjugate gradient. However,  
combining line search with AGD without invalidating its 
performance guarantees would involve non-trivial modification of the 
proposed method. Therefore, for the rest of the methods we keep an 
estimate of $\SmGrad$, and double it whenever the gradient steps fails to 
make sufficient progress. That is, whenever
\begin{equation*}
f\left(x_t -\frac{1}{\SmGrad}\grad f (x_t)\right) > f(x_t) - 
\frac{1}{2\SmGrad}\norm{\grad f(x_t)}^2
\end{equation*}
we set $\SmGrad \gets 2\SmGrad$ and try again. In all experiments we start 
with $\SmGrad = 1$, which underestimates the actual smoothness of $f$ by 
2-3 orders of magnitude. We call our scheme for setting $\SmGrad$ 
semi-adaptive, since we only increase $\SmGrad$, and therefore do not adapt 
to situations where the function becomes more smooth as optimization 
progresses. Thus, we avoid painstaking tuning of $\SmGrad$ while 
preserving the `fixed step-size' nature of our approach, as $\SmGrad$ is only 
doubled a small number of times. 

\paragraph{Algorithm~\ref{alg:MAIN}}

We implement \callMAIN{} with the following modifications, indented to make 
it more practical 
without substantially compromising its theoretical properties. 

\begin{enumerate}
	\item We use the semi-adaptive scheme described above to set 
	$\localsmgrad$. Specifically, whenever the gradient steps in 
	lines~\ref{line:agd-y} and~\ref{line:zt} of \callAGDUPG{} and \callCERT{} 
	respectively fail, we double $\localsmgrad$ until it succeeds, terminate 
	\callAGDUPG{} and multiply $\SmGrad$ by the same factor.
	
	\item We make the input parameters for \callAGDUPG{} dynamic. In 
	particular, we set $\epsilon' = \norm{ \grad f( p_{k-1}) } / 10$ and use 
	$\alpha = \StrConv = C_{1} \norm{ \grad f( p_{k-1}) }^{2/3}$, where $C_1$ 
	is a hyper-parameter. We use the same value of $\alpha$ to construct 
	$\fhat$. This makes our implementation independent on the final desired 
	accuracy $\epsilon$. 
	
	\item In \callCERT{} we also test whether
	\begin{equation*}
	\fhat(x_t) + \grad \fhat (x_t)^T (y_t - x_t) > \fhat(y_t).
	\end{equation*}
	Since this inequality is a clear convexity violation, we return $w_t = y_t$ 
	whenever it holds. We find that this substantially increases our method's 
	capability of detecting negative curvature; most of the non-convexity 
	detection in the first experiment is due to this check.
	
	\item Whenever \callCERT{} produces a point $w_t \neq \NULL$ (thereby 
	proving non-convexity and stopping \callAGDUPG{}), instead of finding a 
	single pair $(v,u)$ that violates strong convexity, we compute
	\begin{equation*}
	\alpha_{v,u} = 2\frac{ f(v)-f(u)-\grad f(v)^T (u-v)}{\norm{u-v}^2}
	\end{equation*}
	for the $2t$ points of the form $v=x_j$ and $u=y_j$ or $u=w_t$, with $0\le 
	j < t$, where here we use the original $f$ rather than $\fhat$ given to 
	\callAGDUPG{}. We discard all pairs with $\alpha_{v,u}<0$ (no evidence of 
	negative curvature), and select the 5 pairs with highest value of 
	$\alpha_{v,u}$. For each selected pair $v,u$, we exploit negative curvature 
	by testing all the points of the form $\{z\pm \eta \delta\}$ with $\delta = 
	(u-v)/\norm{u-v}$, $z\in \{v,u\}$ and $\eta$ in a grid of 10 points 
	log-uniformly spaced between $0.01\norm{u-v}$ and 
	$100(\norm{u}+\norm{v})$.

	\item In~\callFBT{} we compute $c_j$ and $q_j$ for every $j$ such that 
	$f(x_j) > f(y_j)$. Moreover, when $v,u = \NULL$ (no non-convexity 
	detected), we still set the next iterate $p_k$ to be the output of $\callFBT{}$ 
	rather than just the last AGD step.
\end{enumerate} 

The hyper-parameter $C_1$ was tuned separately for each 
experiment by searching on a small grid. For the regression 
experiment the tuning was performed on different problem instances 
(different seeds) than the 
ones reported in Fig.~\ref{fig:tukey-problem}. 
For the neural network training problem the tuning was performed on a 
subsample of 10\%
one reported in Fig.~\ref{fig:ANN}. The specific parameters used were 
$C_1= 0.01$ for regression and $C_1 = 0.1$ for 
neural network training. 

%
%
%
%
%
%
%
%
%
%
%
%

\paragraph{Algorithm~\ref{alg:MAIN} without negative curvature 
exploitation} 

This method is identical to the one described above, except that at every 
iteration $p_k$ is set to $b^{(1)}$ produced by \callFBT{} (\ie the output of 
negative curvature exploitation is never used). We used the same 
hyper-parameters described above. 

\paragraph{Gradient descent}

Gradient descent descent is simply~\eqref{eq:gradient-step}, with $y_{t+1} = 
x_{t+1}$, where the semi-adaptive scheme is used to set 
$\SmGrad$.

\paragraph{Adaptive restart accelerated gradient descent}

We use the accelerated gradient descent scheme of 
\citet{beck2009gradient} with $\omega_{t} = t / (t + 3)$. We use the restart 
scheme given by \citet{o2015adaptive} where if $f(y_{t}) > f(y_{t-1})$ then we 
restart the algorithm from the point $y_{t}$. For the gradient steps we use the 
same semi-adaptive procedure described above and also restart the algorithm 
whenever the $\SmGrad$ estimate changes (restarts performed for this 
reason are not shown in Fig.~\ref{fig:tukey-problem} and~\ref{fig:ANN}).

\paragraph{Non-linear conjugate gradient} 

The method is given by the following recursion \cite{polaknote},
\begin{gather*}
\delta_t = -\grad f(x_t) + \max\left\{\frac{\grad f(x_{t})^T ( \grad f(x_{t}) 
-  \grad f(x_{t-1}) ) }{\| 
\grad f(x_{t-1}) \|^2} ,0\right\} \delta_{t-1} ~~,~~
x_{t+1} = x_{t} + \eta_t  \delta_t
\end{gather*}
where $\delta_0 = 0$ and $\eta_t$ is found via backtracking line search, as 
follows. If $\delta^T 
\grad f(x_t) \ge 0$ we set $\delta_t = -\grad 
f(x_{t})$ (truncating the recursion). We set 
$\eta_t = 2\eta_{t-1}$ and then check whether
\begin{equation*}%
f (x_{t} + \eta_t \delta_t )  \le f(x_t) + \frac{ \eta_t \delta_t^T \grad f(x_t) }{2}
\end{equation*}
holds. If it does we keep the value of $\eta_t$, and if it does not we set $\eta_t 
= \eta_t/2$ and repeat. The key difference from the semi-adaptive scheme 
used for the rest of the methods is the initialization $\eta_t = 2\eta_{t-1}$, that 
allows the step size to grow. Performing line search is crucial for conjugate 
gradient to succeed, as otherwise it cannot produce approximately conjugate 
directions. If instead we use the semi-adaptive step size scheme, 
performance becomes very similar to that of gradient descent.

\paragraph{Comparison of computational cost}

In the figures, the x-axis is set to the number of steps performed by 
the methods. We do this because it enables a one-to-one comparison 
between the steps of the restarted AGD and Algorithm~\ref{alg:MAIN}. 
However, Algorithm~\ref{alg:MAIN} requires twice the number of gradient 
evaluations per step of the other algorithms. Furthermore, the number of function evaluations of 
Algorithm~\ref{alg:MAIN}  increases substantially when we exploit negative 
curvature, due to our naive grid search procedure. 
Nonetheless, we believe it is possible to derive a variation of our approach 
that 
performs only one gradient computation per step, and yet maintains similar 
performance (see remark after Corollary~\ref{coro:witness-testimony}, and that effective negative curvature exploitation can be carried 
out with only few function evaluations, using a line search.  

While the rest of the methods tested require one gradient evaluation per step, the required number of function evaluations differs. GD requires only one function evaluation per step, while RAGD evaluates $f$ twice per step (at $x_t$ and $y_t$); the number of additional function evaluations due to the semi-adaptive scheme is negligible. NCG is expected to require more function evaluations due to its use of a backtracking line search. In the first experiment, NCG required 2 function evaluations per step on average, indicating that its $\SmGrad$ estimate was stable for long durations. Alg.~\ref{alg:MAIN} required 5.3 function evaluations per step (on average over the 1,000 problem instances, with standard deviation 0.5), putting the amortized cost of our crude negative curvature exploitation scheme at 3.3 function evaluations per step.

\subsection{Neural network training}\label{app:exp-2}

The function $f$ is the average cross-entropy loss of 10-way prediction of 
class labels from input features. The prediction if formed by applying softmax 
on the 
output of a neural network with three hidden layers of 20, 10 and 5 units and 
$\tanh$ activations. To obtain data features we perform the following 
preprocessing, where the training examples are treated as $28^2$ 
dimensional vectors. First, each example is separately normalized to zero 
mean and unit variance. Then, the $28^2\times 28^2$ data covariance matrix 
is formed, and a projection to the 10 principle components is found via 
eigen-decomposition. The projection is then applied to the training set, and 
then each of the 10 resulting features is normalized to have zero mean and 
unit variance across the training set. The resulting model has $d=545$ 
parameters and underfits the 60,000 examples training set. We randomly 
initialize the 
weights 
according the well-known scaling proposed 
by~\citet{glorot2010understanding}. We repeated the experiment for 10 
different initializations of the weights, and all results were consistent with 
those reported in Fig.~\ref{fig:ANN}.

\end{document}